\documentclass[12pt]{amsart}

\usepackage{latexsym,fancyhdr,amssymb,color,amsmath,amsthm,graphicx,listings,comment}
\usepackage[section]{placeins} \pagestyle{fancy}
 \newtheorem{lemma}{Lemma} \newtheorem{propo}{Proposition} 
 \newtheorem{coro}{Corollary} \setlength{\parindent}{0cm} 
\let\paragraph\subsection
 \newcommand{\ZZ}{\mathbb{Z}}
  \def\G{\mathcal{G}}
\def\osquare{\mbox{\ooalign{$\times$\cr\hidewidth$\square$\hidewidth\cr}} }

\title{On the arithmetic of graphs}
\fancyhead{}
\fancyhead[LO]{\fontsize{9}{9} \selectfont OLIVER KNILL}
\fancyhead[LE]{\fontsize{9}{9} \selectfont ARITHMETIC OF GRAPHS}

\author{Oliver Knill}
\date{Jun 18, 2017}
\address{Department of Mathematics \\ Harvard University \\ Cambridge, MA, 02138 }
\subjclass{05C76, 20M13,68R10)}
\keywords{Arithmetic, Zykov product}


\begin{document}
\maketitle

\begin{abstract}
The Zykov ring of signed finite simple graphs with topological join as addition and compatible 
multiplication is an integral domain but not a unique factorization domain. We know
that because by the graph complement operation it is isomorphic to the strong 
Sabidussi ring with disjoint union as addition. We prove that the Euler characteristic
is a ring homomorphism from the strong ring to the integers by demonstrating that the strong ring
is homotopic to a Stanley-Reisner Cartesian ring. More generally, the Kuenneth formula holds on the strong ring
so that the Poincar\'e polynomial is compatible with the ring structure. 
The Zykov ring has the clique number as a ring homomorphism. Furthermore, the Cartesian ring has the 
property that the functor which attaches to a graph the spectrum of its connection Laplacian is
multiplicative. The reason is that the connection Laplacians do tensor under multiplication, similarly to what the adjacency 
matrix does for the weak ring. The strong ring product of two graphs contains both the weak and direct product graphs
as subgraphs. The Zykov, Sabidussi or Stanley-Reisner rings are so manifestations of a 
network arithmetic which has remarkable cohomological properties, dimension and spectral compatibility
but where arithmetic questions like the complexity of detecting primes or factoring are not yet
studied well. We illustrate the Zykov arithmetic with examples, especially from the subring generated 
by point graphs which contains spheres, stars or complete bipartite graphs. While things 
are formulated in the language of graph theory, all constructions generalize to the larger category of 
finite abstract simplicial complexes.
\end{abstract}

\section{Extended summary}

\paragraph{}
Finite simple graphs extend to a class $\G$ of signed finite simple graphs which carry 
three important commutative ring structures: the {\bf weak ring} $(\G,\oplus,\square,0,1)$, 
the {\bf direct ring} $(\G,\oplus,\otimes,0,1)$ and the {\bf strong ring}
$(\G,\oplus,\osquare,0,1)$ \cite{Sabidussi,ImrichKlavzar,HammackImrichKlavzar}. 
In all three cases, the disjoint union $\oplus$ is the addition, the empty graph the zero 
element and the one point graph $K_1$ the one element. The weak ring product $\square$
is the Cartesian product for graphs which corresponds to the tensor product of
adjacency matrices, the direct ring product $\otimes$ is also known as the {\bf tensor product} of graphs.
The strong product $\osquare$ of Sabidussi combines edge sets of the other two. In each case, taking graph complements 
produces dual rings in which the addition is the Zykov join $+$ which corresponds to the join in topology, 
and which preserves the class of spheres. 
We first observe that the dual to the Zykov ring introduced in \cite{Spheregeometry} is the strong ring so
that also the Zykov ring was already known. The {\bf Sabidussi unique prime factorization theorem for connected graphs} and the 
Imrich-Klavzar examples of non-unique prime factorization in the disconnected case or in general for the direct product
and are so inherited and especially hold for the Zykov ring which is therefore, like the Sabidussi ring,
an {\bf integral domain} but {\bf not a unique factorization domain}.

\paragraph{}
The {\bf clique number} from $\G \to \mathbb{Z}$ assigning to a graph $G$ the number ${\rm dim}(G)+1$, where ${\rm dim}$ is
the maximal dimension of a simplex in $G$, extends to a ring homomorphism from the Zykov ring to the integers. 
Inherited from its dual, also the Zykov addition, the join, has a unique additive prime factorization, 
where the primes are the graphs for which the dual is connected. 
We observe that the {\bf Euler characteristic} $\chi$  is a ring homomorphism from the strong ring to the integers 
and that the {\bf Kuenneth formula} holds: the map from the graph to 
its {\bf Poincar\'e polynomial} $p(G)$ is a ring homomorphism $p(G+H)=p(G)+p(H)$ and $p(G \osquare H) = p(G) p(H)$. 
To do so, we observe that the strong product $\osquare$ is homotopic to the graph product $\times$ 
treated in \cite{KnillKuenneth} and which is essentially the Stanley-Reisner
ring product when written down algebraically. 

\paragraph{}
We also note that the tensor product of the connection 
Laplacians of two graphs is the connection Laplacian $L$ of this Stanley-Reisner product $\times$, 
confirming so that the energy theorem equating the Euler characteristic $\chi(G)$ with the sum $\sum_{x,y} g(x,x)$ 
of the matrix entries of $g=L^{-1}$ in the case of simplicial complexes, extends to the full 
Stanley-Reisner ring generated by complexes. 
It follows that the spectra of the connection Laplacians $L(G)$ of a complex $G$ satisfy 
$\sigma(L(G) + L(H)) = \sigma(L(G)) \cup \sigma(L(H))$ and $\sigma(L(G \times H)) = \sigma(L(G)) \sigma(L(H))$. 
So, not only the potential theoretical energy, but also the individual energy spectral values are compatible
with the arithmetic. The Zykov ring relates so with other rings sharing so extraordinary 
topological, homological, potential theoretical and spectral properties modulo duality or homotopy. 

\section{Graph arithmetic}

\paragraph{}
Let $\G_0$ denote the category of finite simple graphs $G=(V,E)$. With the disjoint union $\oplus$ as addition,
the {\bf weak product} $\square$, the {\bf direct product} $\otimes$ 
and the {\bf strong product} $\square$ produce new graphs with vertex set $V(G) \times V(H$) \cite{Sabidussi}. 
Let $(\G,\oplus)$ be the group generated by the monoid $(\G_0,\oplus)$. The tensor product is also called direct product and
the weak product the graph product. The definitions of the products are specified by giving the edge sets. We have
$E(G \square H)=\{ ((a,b),(c,d)) | a=c,(b,d) \in E(H) \} \cup \{ ((a,b),(c,d)), b=d,(a,c) \in E(G) \}$
and $E(G \otimes H)=\{ ((a,b),(c,d) | (a,c) \in E(G), (b,d) \in E(H) \}$ and
$E(G \osquare H) = E(G \square H) \cup E(G \otimes H)$. The unit element in all three monoids
is the one point graph $K_1$ called $1$. A graph $G \in \G$ is {\bf prime} in a ring if they 
are only divisible by $1$ and itself. All three products have a prime factorization but 
no unique prime factorization \cite{ImrichKlavzar}. All three products belong to a ring if the addition $\oplus$
is the disjoint union Grothendieck augmented to a group by 
identifying $A \ominus B \sim C \ominus D$ if $A \oplus C \oplus K=B \oplus D \oplus K$ 
for some $K$. Since we will see that unique additive prime factorization holds in $(\G,\oplus)$, the 
additive primes being the connected graphs, one can simplify this and write every element in the ring as an 
ordered pair $(A,B)$ of graphs and simply write also $A \ominus B$ or $A-B$ and multiply the usual way
like for example $(A-B) \osquare (C-D)$ $= (A \osquare C) \oplus (B \osquare D)$ 
$\ominus (A \osquare D) \ominus (B \osquare C)$.

\paragraph{}
The weak product seems first have appeared in the Principia Mathematica of Whitehead and Russell.
The three products appeared together in the pioneering paper \cite{Sabidussi}, where they were first mathematically 
studied. In \cite{Spheregeometry}, we took the Zykov join \cite{Zykov} $(V,E) + (W,F)=(V \cup W,E \cup F \cup VW$,
where VW denotes all edges connecting any $V$ to any $W$ as a ``sum" and constructed a compatible product
$G \cdot H =$ $(V \times W,$  $\{ ((a,b),(c,d)) | (a,c) \in E(G)$ or $(b,d) \in E(H) \})$. 
Also $(\G,+,\cdot)$ produces a ring. Finally, 
there is a Cartesian product on graphs which satisfies the Kuenneth formula \cite{KnillKuenneth}:
in that product, the vertex set is $V(G_1) \times V(H_1)$, the product of the vertex sets of the
Barycentric refinements $G_1,H_1$ of $G,H$ and two vertices are connected if one product simplex is contained in the other.
It is a ring when extended to CW-complexes or interpreted as the product in the Stanley-Reisner ring.
All three rings $(\G,\oplus,\square,0,1),(\G,\oplus,\otimes,0,1),(\G,\oplus,\osquare,0,1)$ 
have the property that they are defined in $\G$ and that the addition of two graphs $(V,E)$ and $(W,F)$ 
has the vertex set $V \cup W$ and that the product in the ring has the vertex set $V \times W$ 
$=\{ (v,w) \; | \; v \in V(G), w \in V(H) \}$. 

\paragraph{}
The topic of graph product is rich as the main sources for this topic \cite{ImrichKlavzar,HammackImrichKlavzar}
show. The additive operations in these works are usually the disjoint union. The dualization
operation appears in the handbook \cite{HammackImrichKlavzar} who mention that there are exactly 20 
associative products on graphs. In this sense, the ring considered in \cite{Spheregeometry} is not new. 
Maybe because it is just the dual to the disjoint union, the Zykov product $\cdot$ appears not have been
studied much in graph theory. \cite{HararyGraphTheory} attribute the construction to a paper of 1949 \cite{Zykov}. It has
the same properties than the join in topology covered in textbooks like \cite{RourkeSanderson,Hatcher}.
The join especially preserves spheres, graphs which have the property that all unit spheres are spheres
and where removing one vertex produces a contractible graph. Examples are $P_2+P_2=S_4$ or $S_4+S_4$ being
a 3-sphere. 

\paragraph{}
We will look in more detail at the join monoid $(\G,+)$ which is the additive Zykov monoid. 
About spectra of the Kirchhoff Laplacian, we know already from \cite{Spheregeometry} that 
$|V(G)| + |V(H)|$ is an eigenvalue of $G+H$ so that $K_n \cdot G=G+G+ \cdots + G$ has an eigenvalue $n |V(G)|$ of
multiplicity $n-1$. We also proved about the {\bf ground state}, the smallest non-zero eigenvalue $\lambda_2$
of a graph that $\lambda_2(G+H) = {\rm min}(|V(H)|,|V(K)|) + {\rm min}(\lambda_2(G),\lambda_2(H)$.
Furthermore, the eigenvalues of the Volume Laplacian $L_d$ if $L_1 \oplus ... \oplus L_d = (d+d^*)^2$
is the {\bf Hodge Laplacian} defined by the incidence matrices $d$ satisfy $\sigma_V(G) + \sigma_V(H) = \sigma_V(G+H)$.
Already well known is $\sigma(G) + \sigma(H) = \sigma(G \osquare H)$ \cite{HammackImrichKlavzar}. In some sense,
this volume Laplacian result about the join dual to the fact that the Kirchhoff Laplacian $L_0$ (which can 
be seen as the Poincar\'e dual to the Volume Laplacian) the disjoint union $\oplus$ satisfies
$\sigma(G) + \sigma(H) = \sigma(G \oplus H)$. The tensor and strong products have no obvious nice
spectral properties. But we will see below that if we look at the spectrum of the {\bf connection Laplacian}, an 
operator naturally appearing for simplicial complexes, then the spectrum behaves nicely for the Cartesian 
product of finite abstract simplicial complexes. This product 
corresponds to the multiplication in the Stanley-Reisner ring but which is not an 
abstract simplicial complex any more. But as the product is homotopic to the strong product, we can stay 
within the category of graphs or simplicial complexes. 

\paragraph{} Here is an
overview over the definitions of the three rings with $\oplus$ as addition: \\

\begin{tiny} \begin{tabular}{|l|l|l|} \hline
Ring   & Addition                          &Multiplication                                                                    \\ \hline
Weak   & $G \oplus H=(V \cup W,E \cup F)$  &$G \square  H=(V \times W,\{ ((a,b),(a,d)) \} \cup \{ ((a,b),(c,b)) \})$            \\ \hline
Tensor & $G \oplus H=(V \cup W,E \cup F)$  &$G \otimes  H=(V \times W,\{((a,b),(c,d))|(a,c) \in E \; {\rm and} \; (b,d) \in F \})$ \\ \hline
Strong & $G \oplus H=(V \cup W,E \cup F)$  &$G \osquare H=(V \times W, E(G \square H) \cup E(G \otimes H)$                       \\ \hline
\end{tabular} \end{tiny}

We will relate the strong product $\osquare$ to its dual product, 
the Zykov product $\cdot$ in which the Zykov join $+$ is the addition 
and then deform the multiplication from the strong to the Cartesian product (which however
does not define a ring on $\G$ as associativity got removed by pushing the product back to
a graph using the Barycentric refinement).

\begin{tiny} \begin{tabular}{|l|l|l|} \hline
Ring     & Addition                            &Multiplication                                                                    \\ \hline
Zykov    & $G + H=(V \cup W,E \cup F \cup VW)$ &$G \cdot  H=(V \times W,\{((a,b),(c,d))|(a,c) \in E \; {\rm or} \;  (b,d) \in F \})$ \\ \hline
Cartesian& $G \oplus H=(V \cup W,E \cup F)$    &$(G \times H)_1=(c(G) \times c(H), \{(a,b) | a \subset b \; {\rm or} \;  b \subset a \})$  \\ \hline 
\end{tabular} \end{tiny}

\paragraph{}
The next two figures illustrating the five mentioned products: 

\begin{figure}[!htpb]
\scalebox{0.13}{\includegraphics{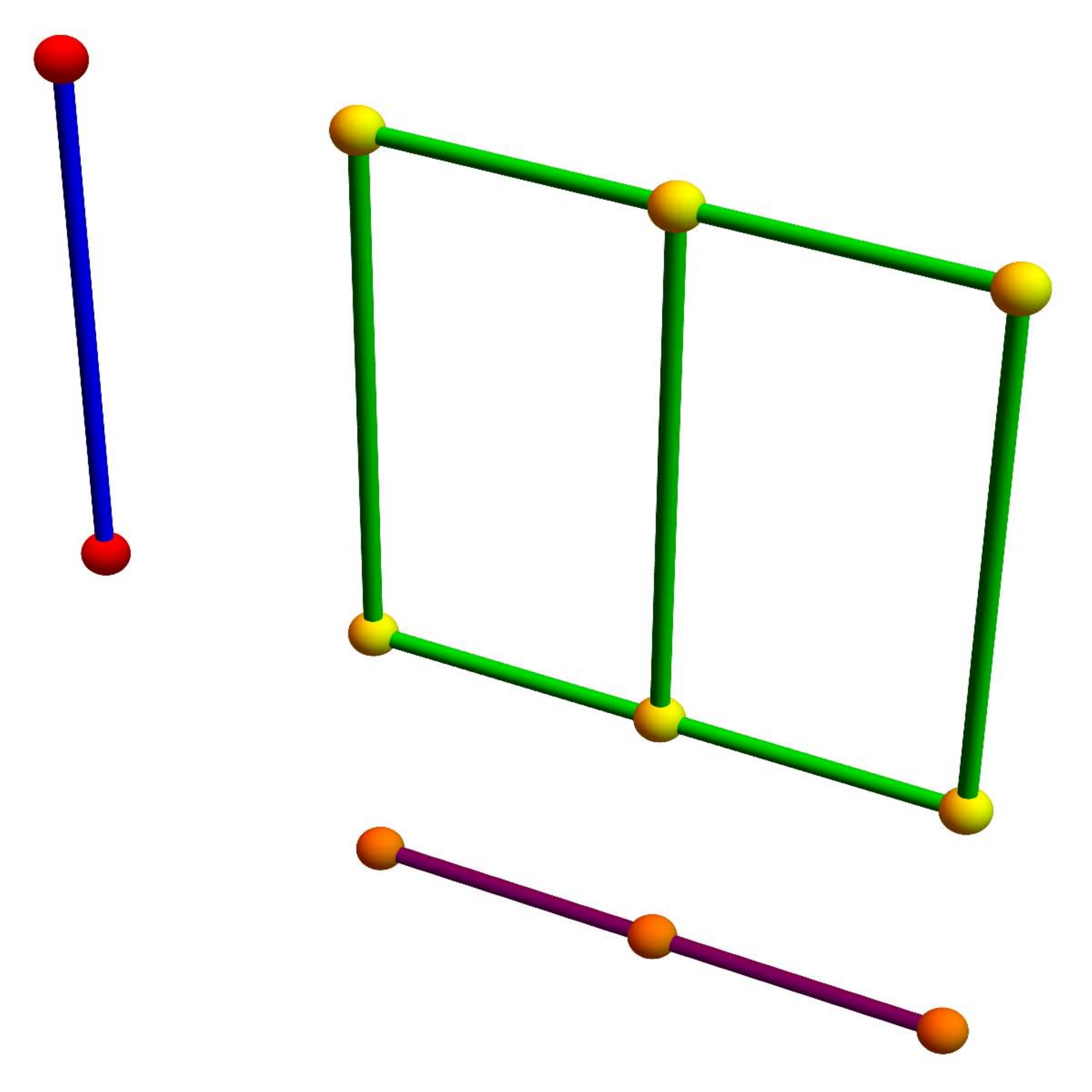}}
\scalebox{0.13}{\includegraphics{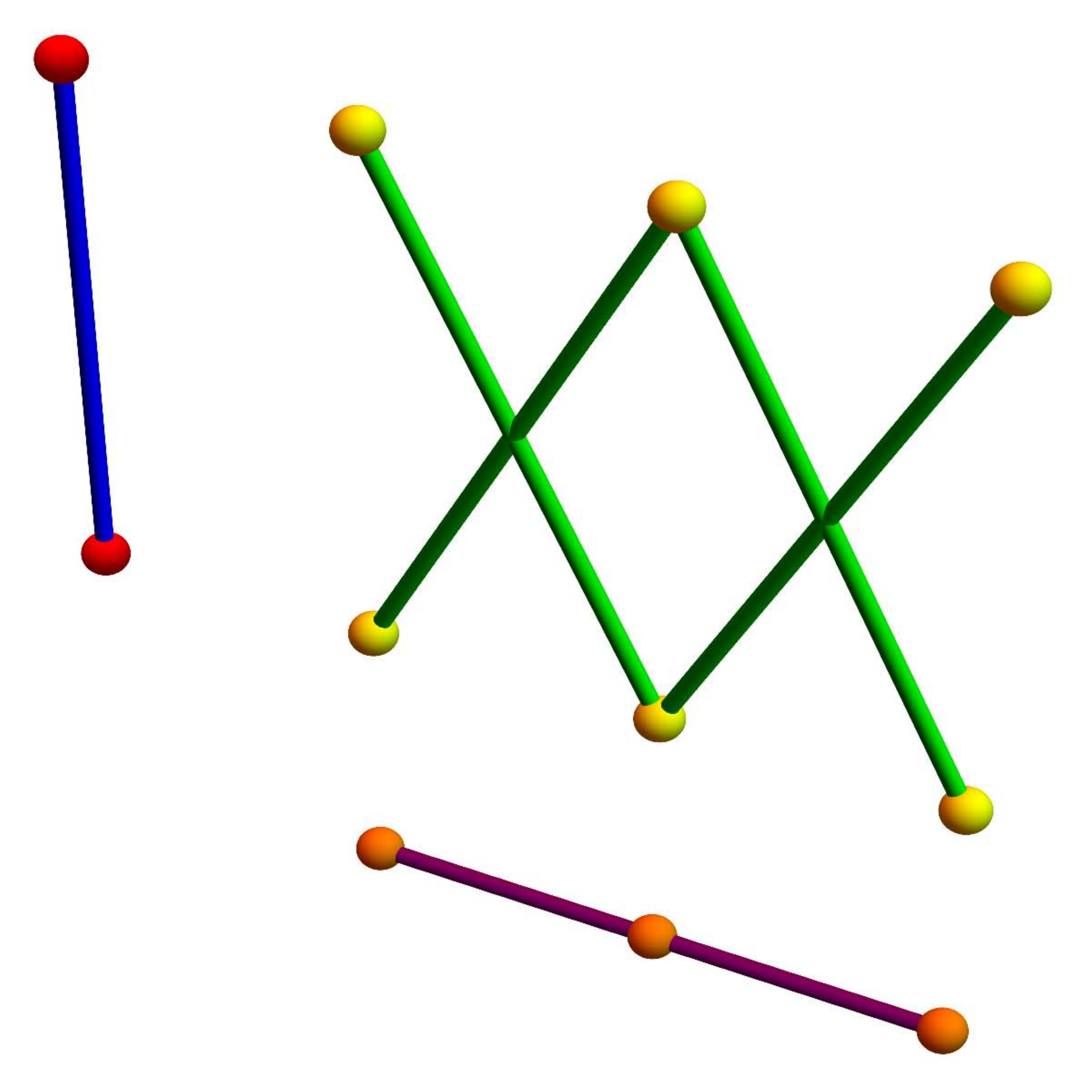}}
\scalebox{0.13}{\includegraphics{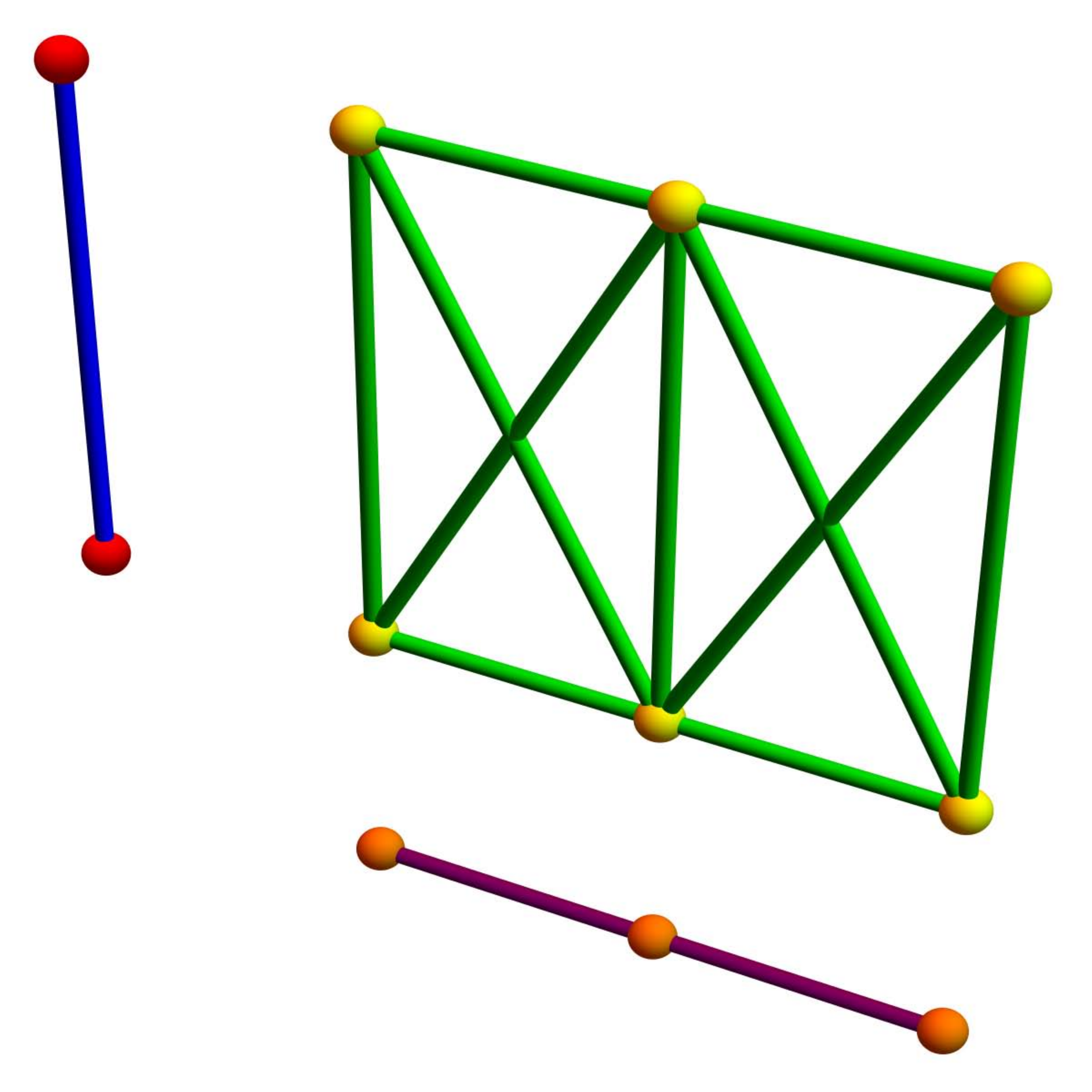}}
\caption{
The multiplication of $K_2$ with $K_2$ in the weak, direct and strong rings. 
}
\end{figure}

\begin{figure}[!htpb]
\scalebox{0.14}{\includegraphics{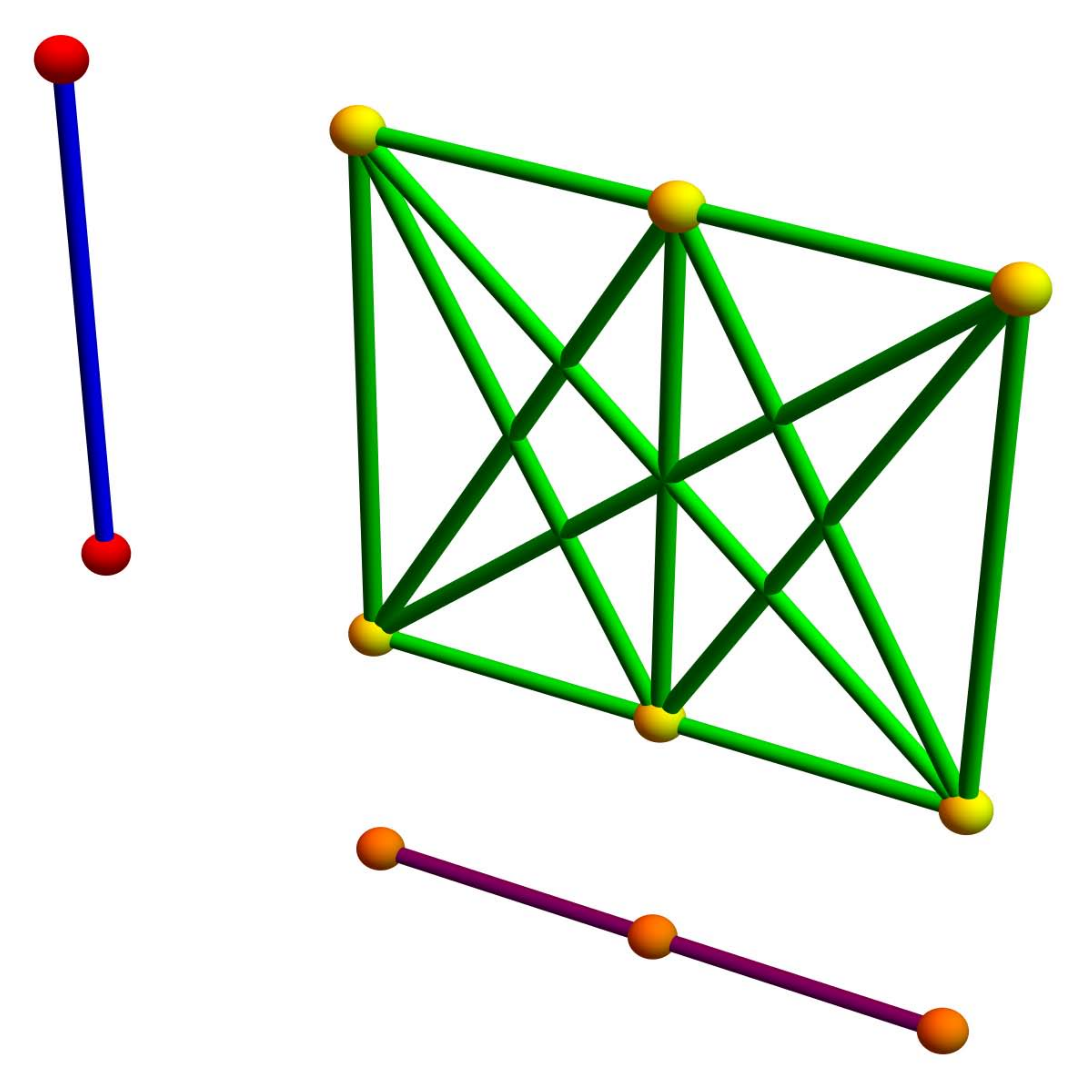}}
\scalebox{0.14}{\includegraphics{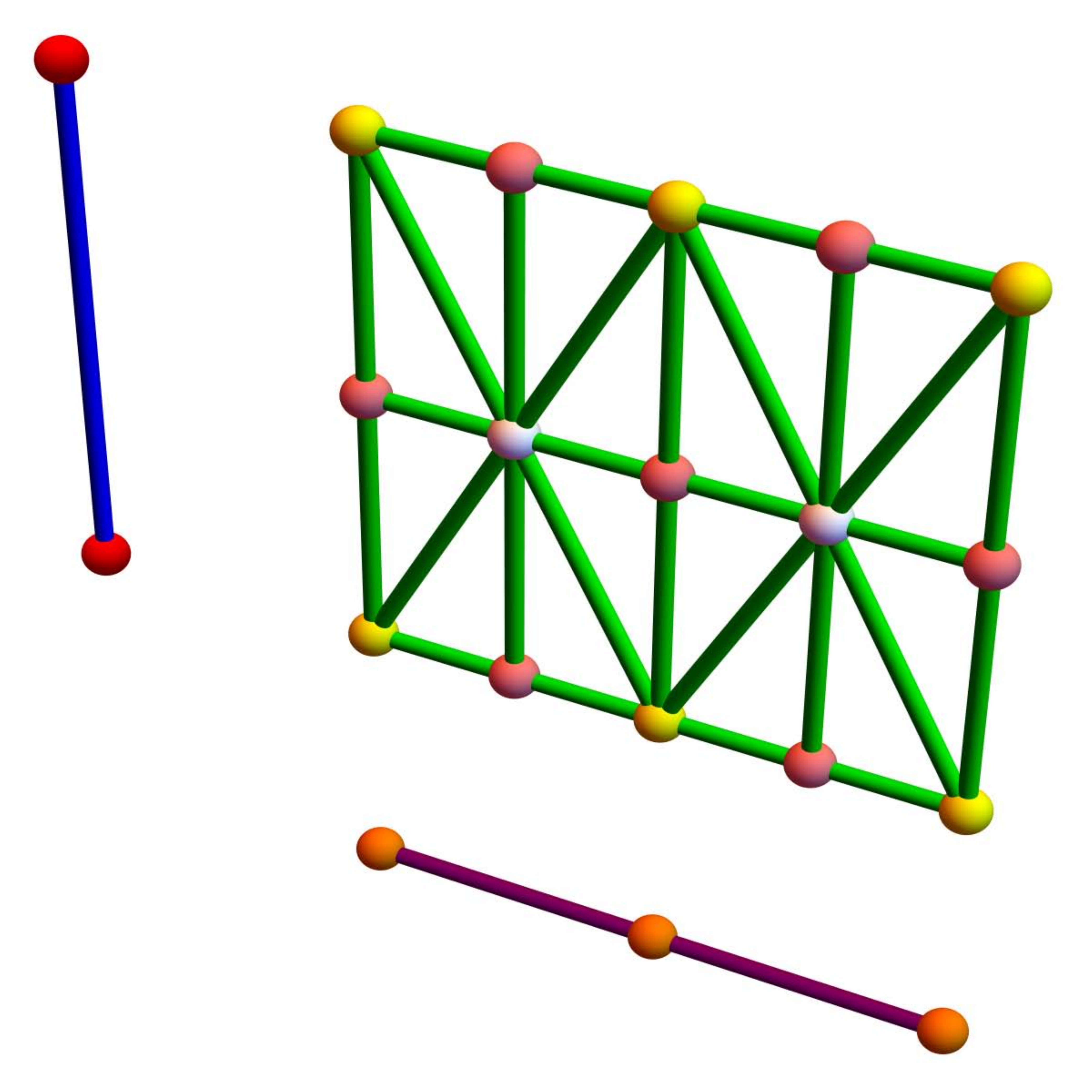}}
\caption{
The multiplication of $K_2$ with $K_2$ in the Zykov and Cartesian simplex product. 
}
\end{figure}

\begin{figure}[!htpb]
\scalebox{0.13}{\includegraphics{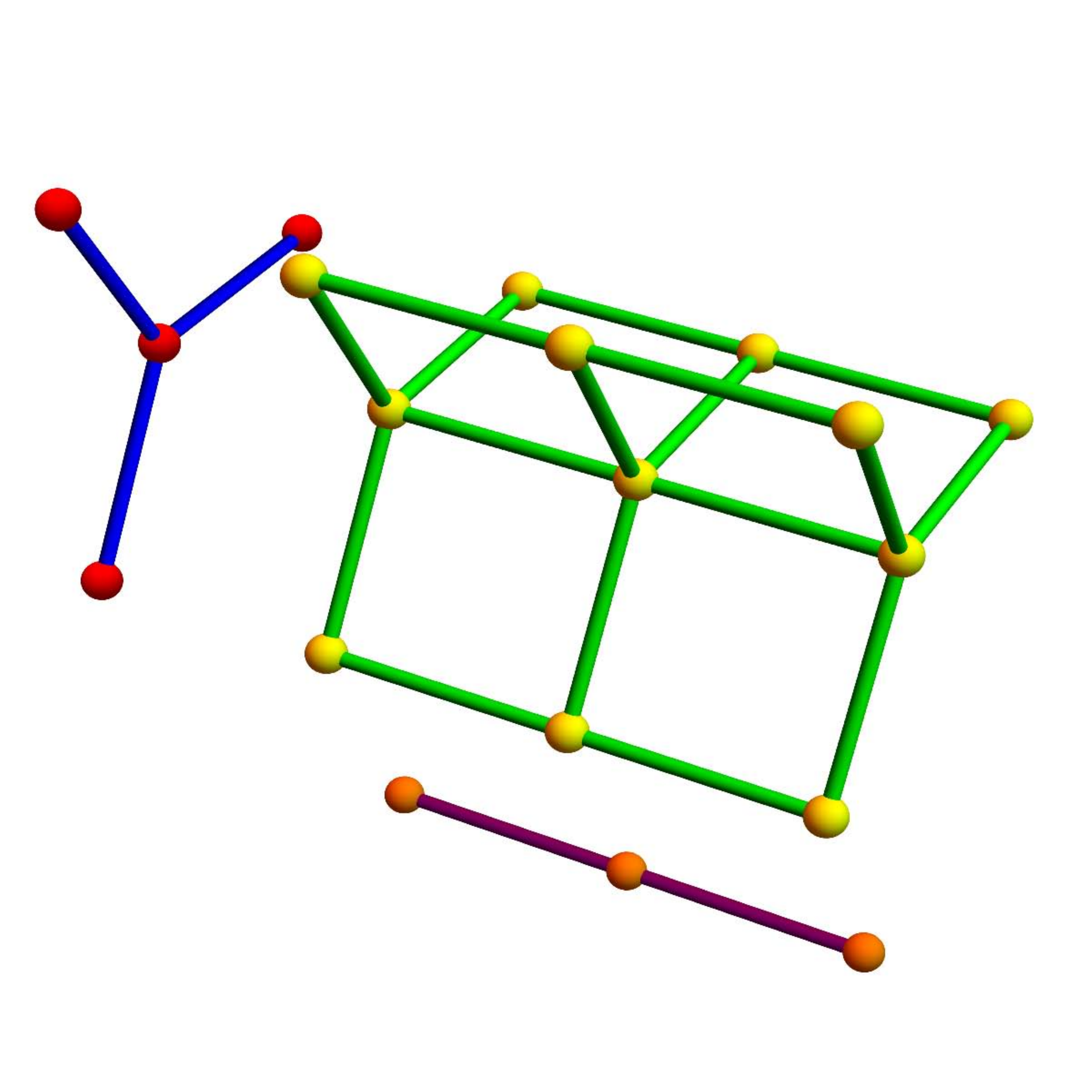}}
\scalebox{0.13}{\includegraphics{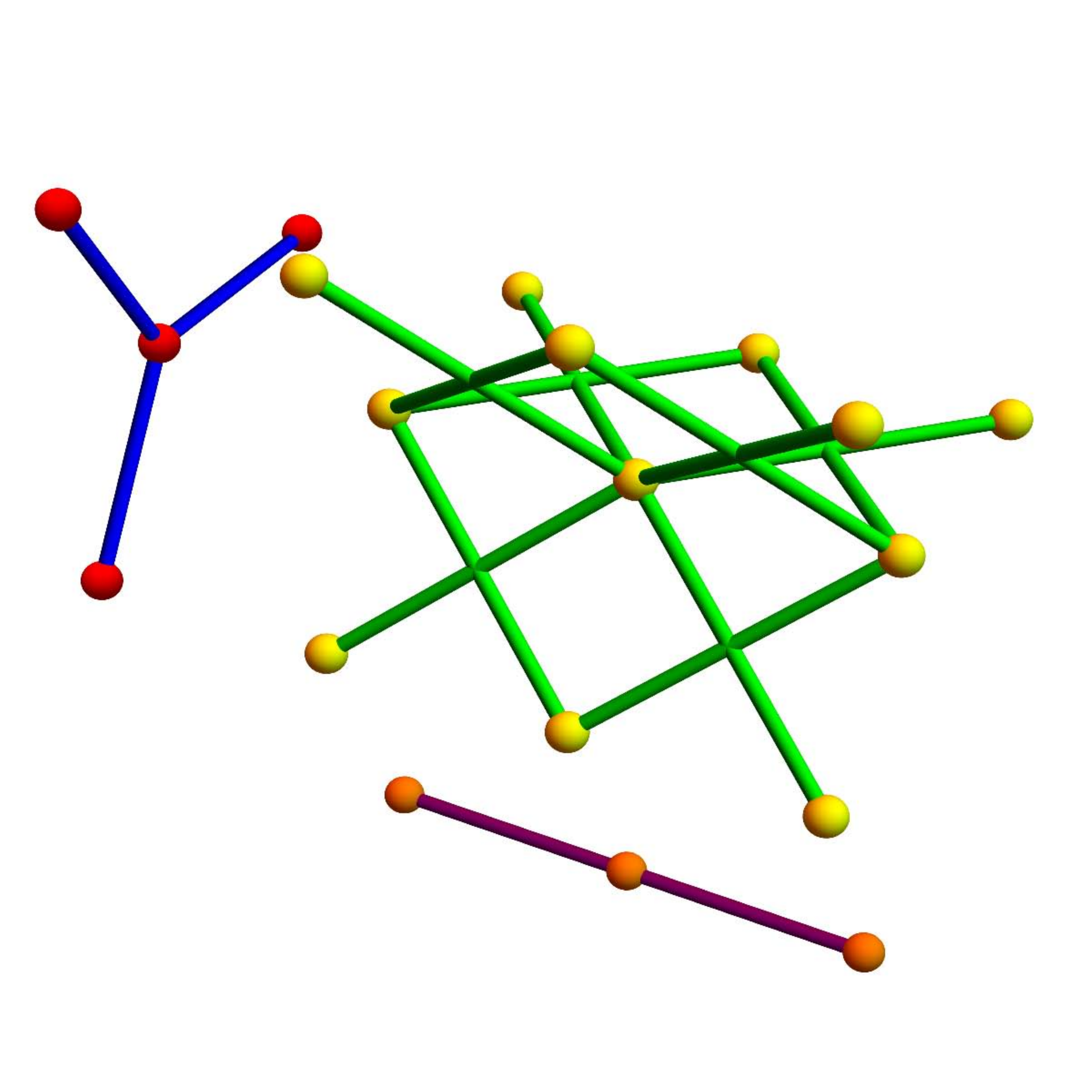}}
\scalebox{0.13}{\includegraphics{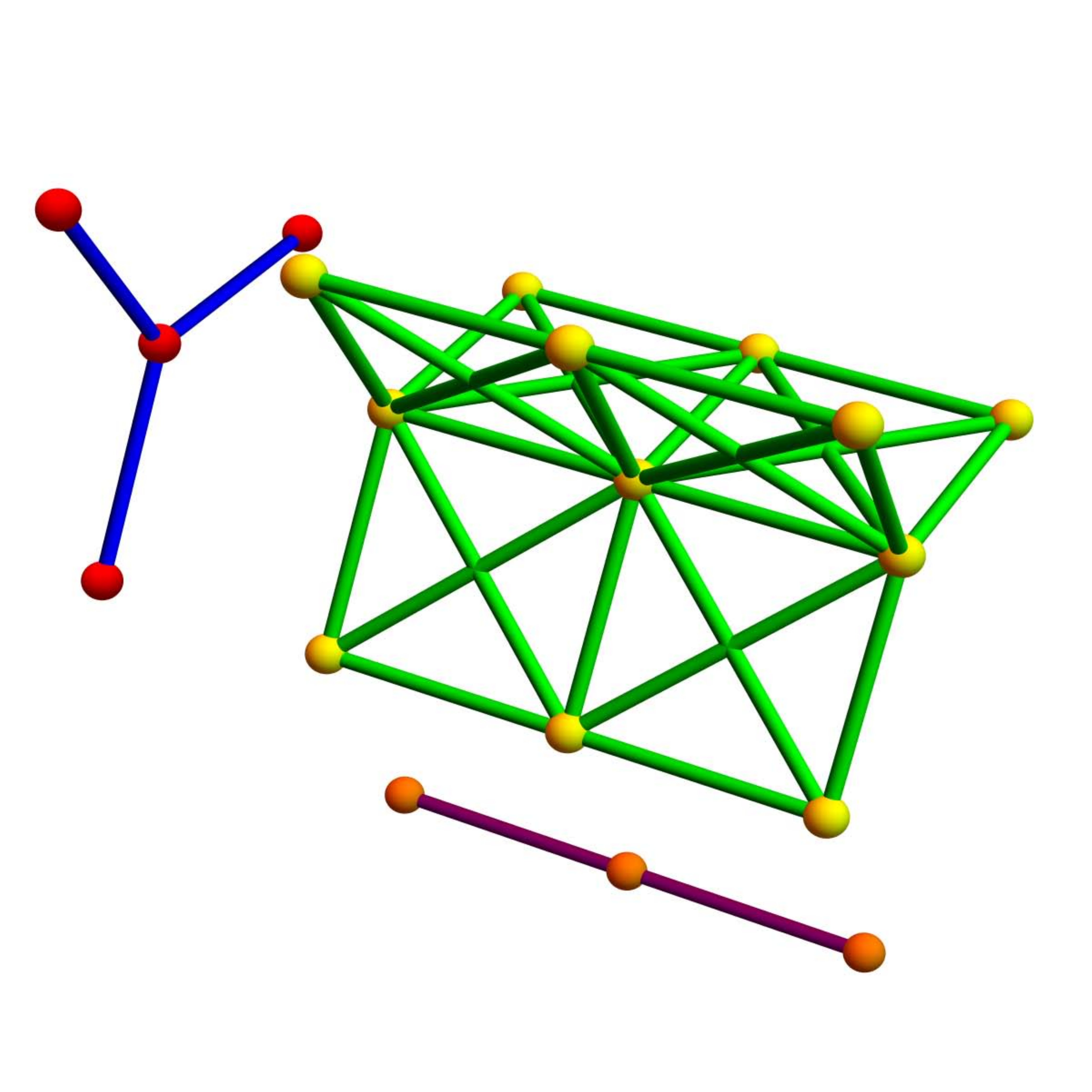}}
\caption{
The multiplication of $K_2$ with $S_3$ is shown in the graph product, the
tensor product and then the strong product. 
}
\end{figure}

\begin{figure}[!htpb]
\scalebox{0.14}{\includegraphics{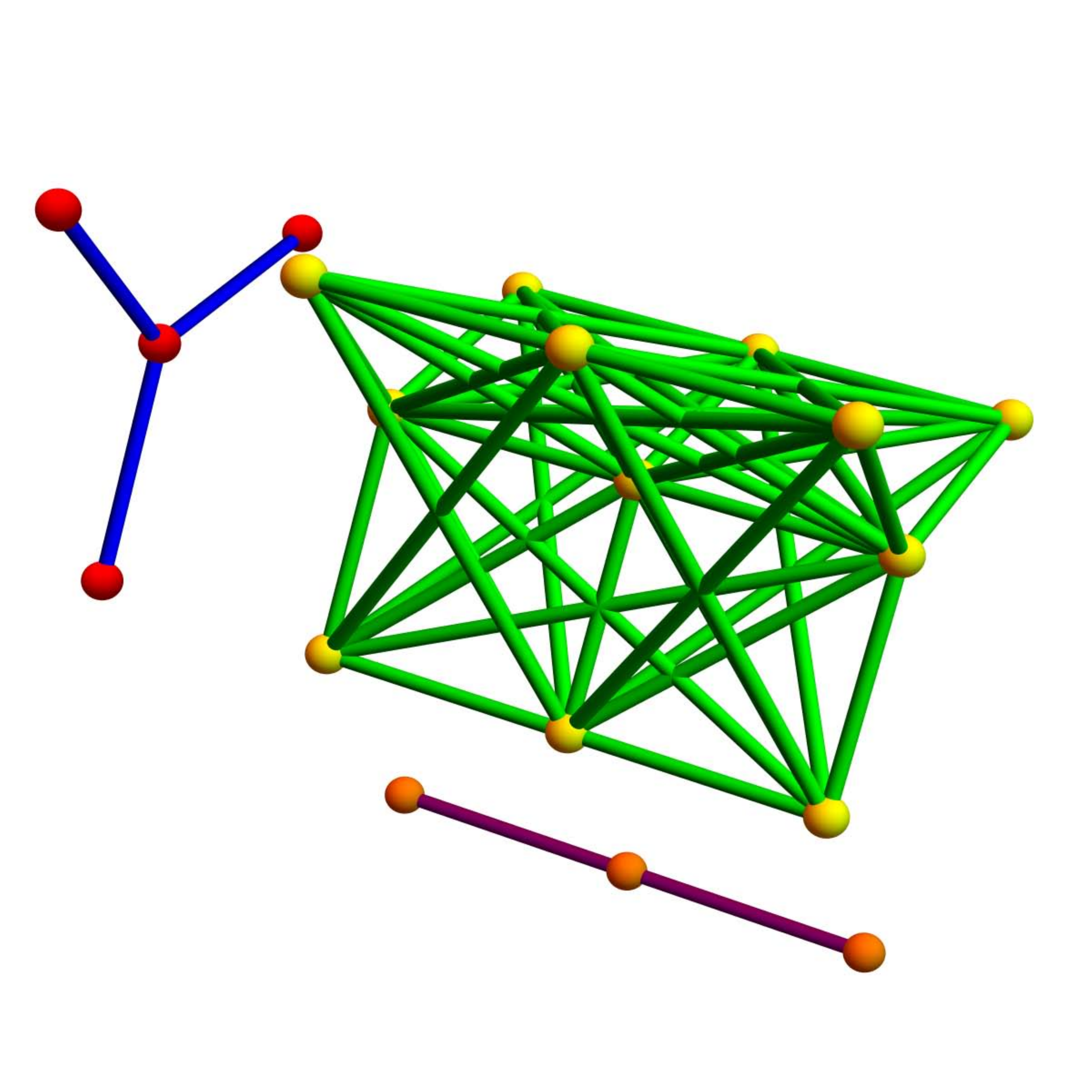}}
\scalebox{0.14}{\includegraphics{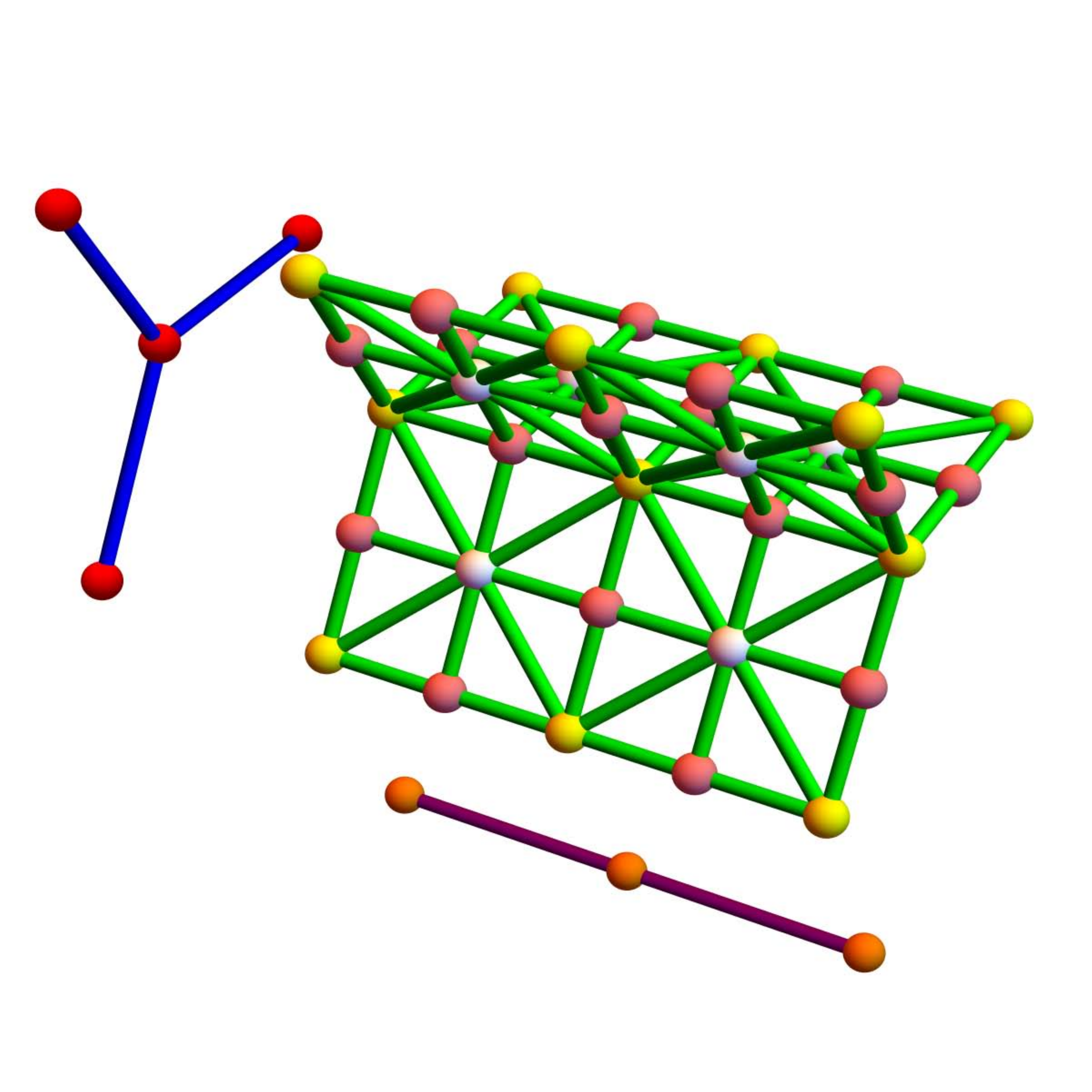}}
\caption{
The multiplication of $K_2$ with $S_3$ is shown in the Zykov and Cartesian simplex product. 
}
\end{figure}

\begin{figure}[!htpb]
\scalebox{0.1}{\includegraphics{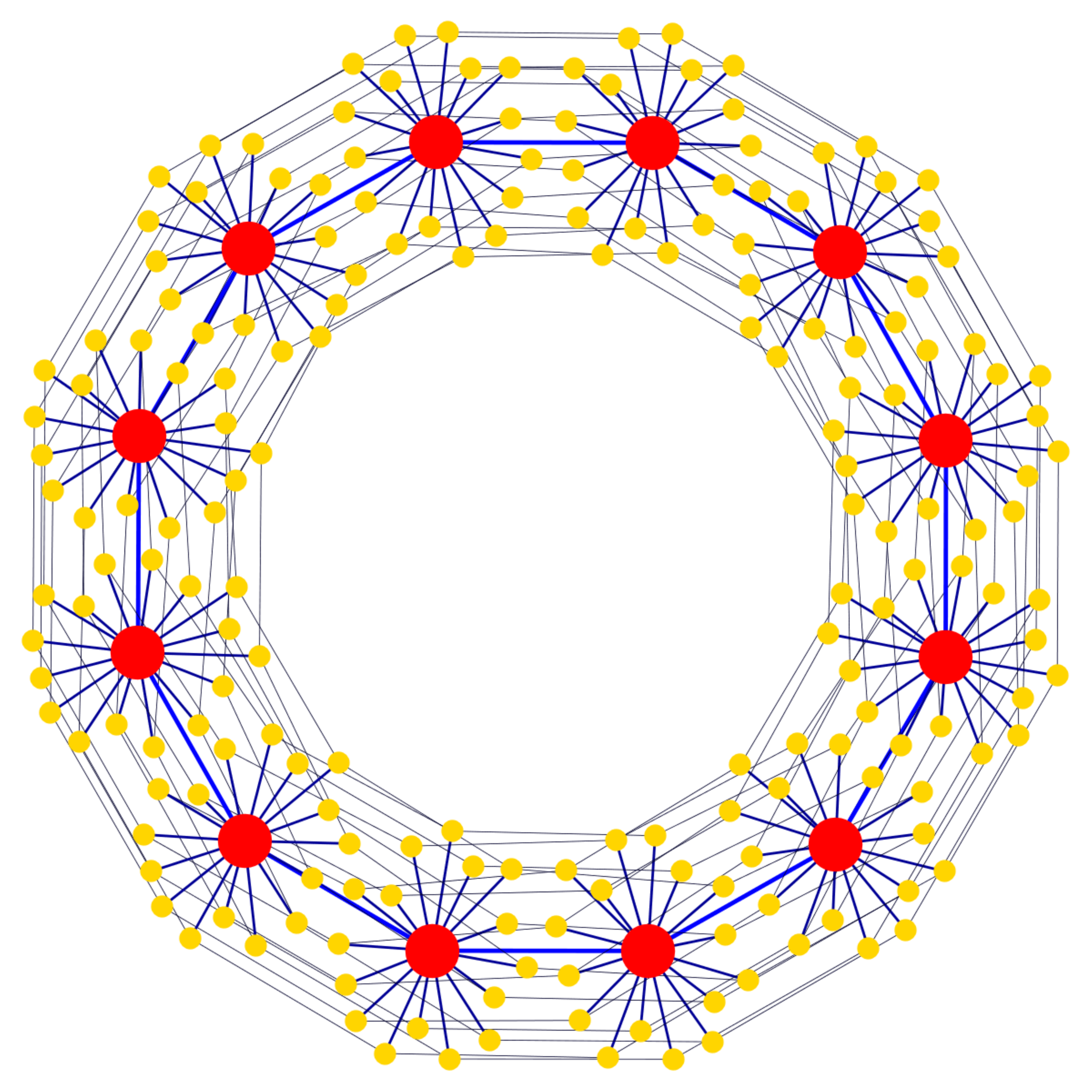}}
\scalebox{0.1}{\includegraphics{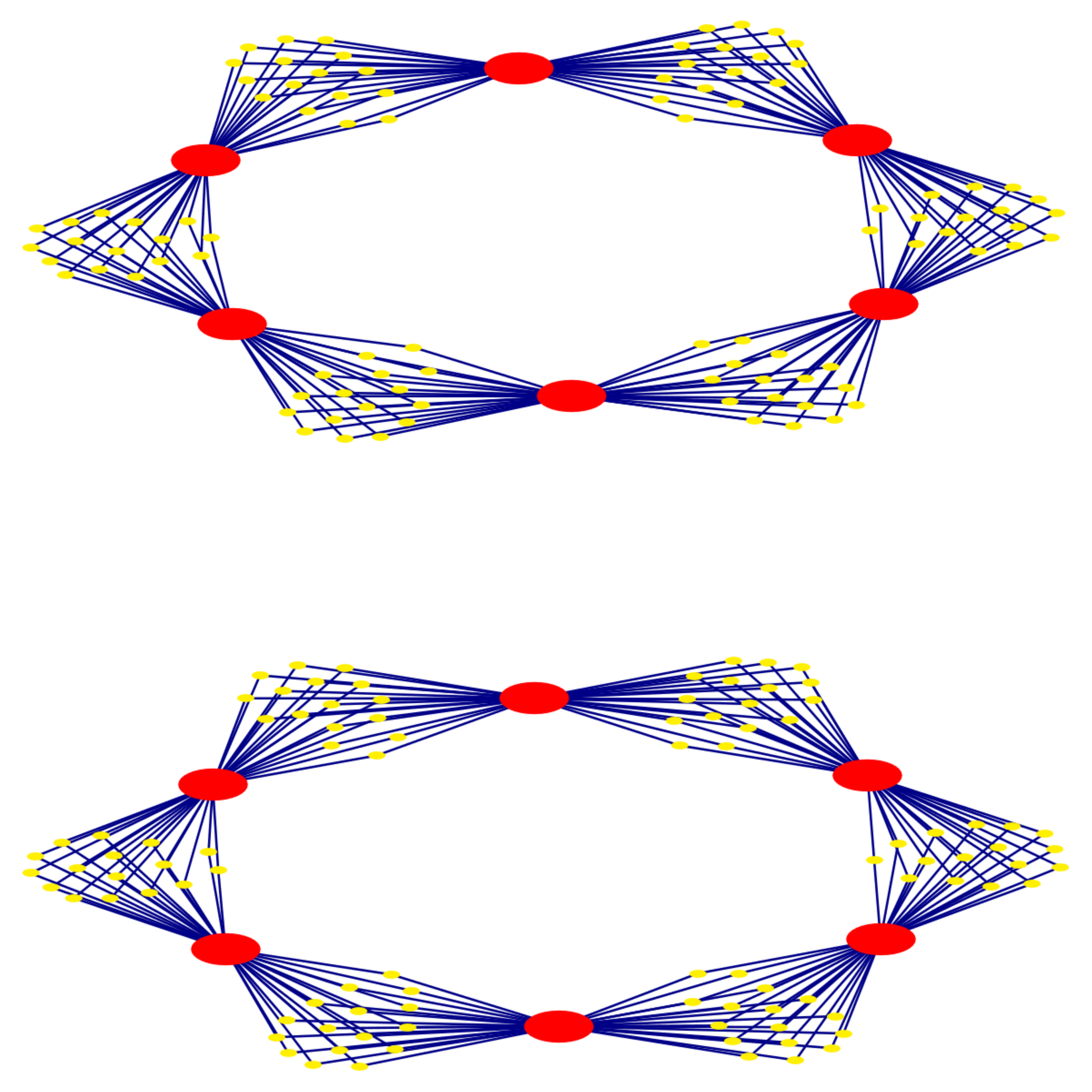}}
\scalebox{0.12}{\includegraphics{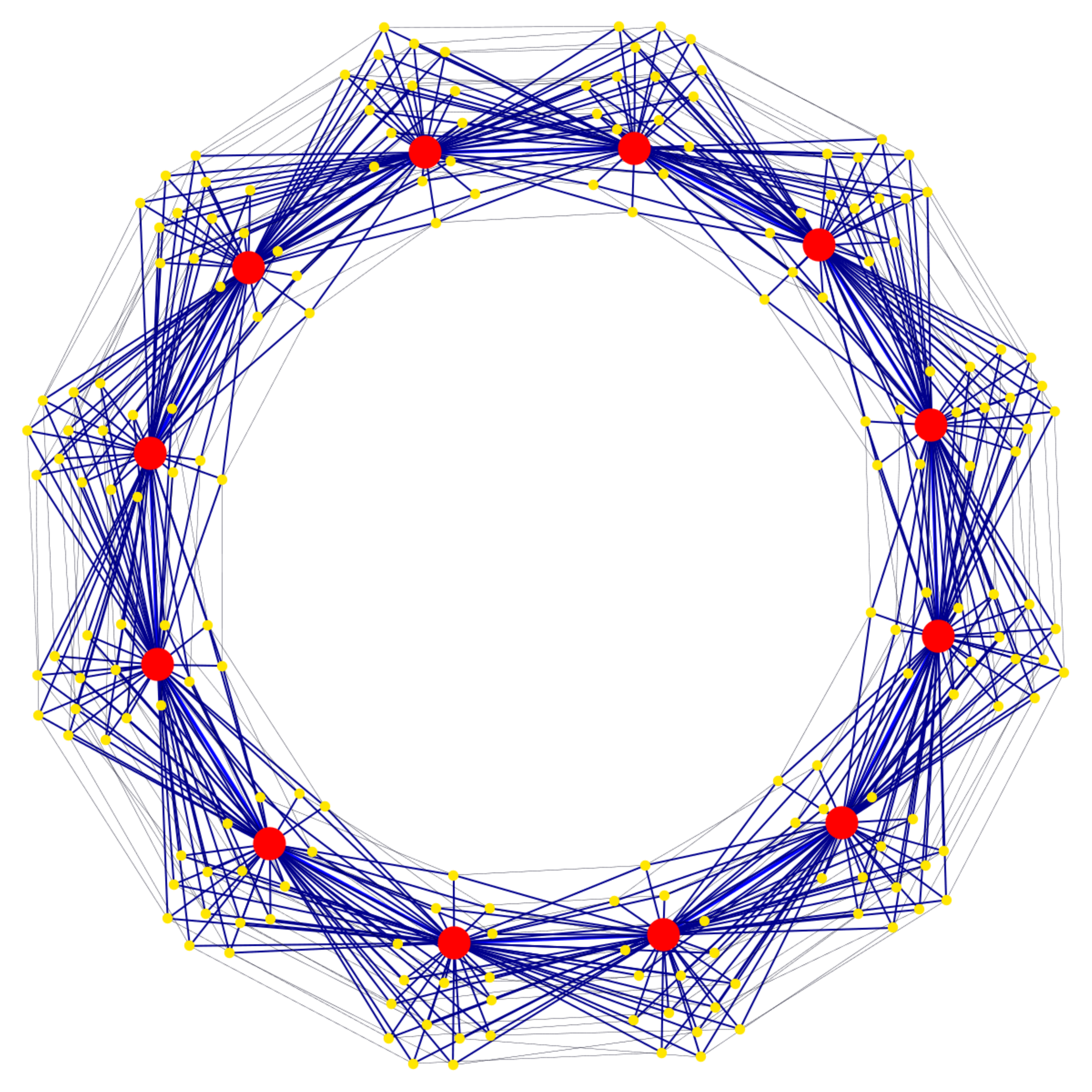}}
\caption{
The weak, direct and strong products of the circular graph $C_{12}$ and 
the star graph $S_{17}$. The direct product (tensor product) is not connected.
It is merged with the first weak product to become the strong product. 
For more sophisticated visualizations, see \cite{VisualizationGraphProducts}.
}
\end{figure}

\section{Properties}

\paragraph{}
In the case of the disjoint union $\oplus$, the graphs $P_n =\{ (1,2, \dots, n), \emptyset \}$ form a
sub-monoid which extends to a subgroup which is isomorphic to the integers $\mathbb{Z}$. 
This is the stone age {\bf pebble addition} which was used before
cuneiforms appeared. The fact that the zero element $0 = \{ \emptyset , \emptyset \}$ and 
the negative numbers $-G$ have been introduced much later in mathematics is paralleled in graph theory:
while the empty graph is used quite frequently as a zero element, negative graphs are rarely discussed. 
{\bf Valuations}, maps $X$ from $\G$ to the real numbers satisfying $X(G \cup H) + X(G \cap H) = X(G) + X(H)$ 
obviously have to be extended in a compatible way. By the discrete Hadwiger theorem \cite{KlainRota}
it is enough 
to look $v_k$ which form a basis. One defines $v_k(G-H) = v_k(H)-v_k(H)$ 
where $v_k(G)$ is the number of $k$-dimensional simplices in $G$. The Euler characteristic is then
still $\sum_{k=0}^{\infty} (-1)^k v_k(G)$. This extension can only be done in the case of the
addition $\oplus$. In the Zykov addition $+$, the generating functions 
$f(x) = 1 + \sum_{k=1}^{\infty} v_{k-1} x^k$ has the property that $f_{G+H} = f_G f_H$ 
(a product and not a sum) which means we would formally would have to require $f_{G-H} = f_G/f_H$
to extend the generating function to a group homomorphism from the Grothendieck group to the
rational functions. Still we can still represent group elements in the additive Zykov groups either
as ordered pairs $(G,H)$ or $G-H$ using some equivalence relation $A-B=C-D$ if $A+D=B+D$. 

\paragraph{}
In the case of the Zykov join operation $+$, the
set of {\bf complete graphs} $K_n$ plays the role of the integers, where the negative numbers are just
written as $-K_n$.  We will see in a moment that one can also write any element in the additive
Zykov group uniquely as $G-H$, where $G,H$ are graphs. From the fact that $K_n + K_m = K_{n+m}$ and
$K_n \cdot K_m = K_{nm}$ (especially postulating $K_{-1} \cdot K_{-1}=K_1$), we see immediately that 
the {\bf clique number} $n=d+1$ giving the largest $n$ for which $K_n$ is a subgraph of $G$ is a ring 
homomorphism: 

\begin{propo}[Clique number as ring homomorphism of Zykov ring]
For the Zykov ring, the clique number $c(G)={\rm dim}(G)+1$ is a ring homomorphisms.  
\end{propo}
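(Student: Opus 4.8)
The plan is to verify the two homomorphism identities $c(G+H)=c(G)+c(H)$ and $c(G\cdot H)=c(G)\cdot c(H)$ separately, reducing each to the behaviour of complete subgraphs under the two Zykov operations and to the identities $K_n+K_m=K_{n+m}$ and $K_n\cdot K_m=K_{nm}$ recorded above. Since $c$ is by definition the largest $n$ with $K_n \subseteq G$, the task is to match the largest complete subgraph of a sum (resp. product) with the sum (resp. product) of the largest complete subgraphs of the factors.

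For the additive identity I would argue directly from the join $G+H=(V\cup W, E\cup F\cup VW)$. Because every vertex of $V$ is adjacent to every vertex of $W$, a set $S\subseteq V\cup W$ is a clique of $G+H$ if and only if $S\cap V$ is a clique of $G$ and $S\cap W$ is a clique of $H$; the crossing edges are automatic. Hence a maximum clique of $G+H$ is the union of a maximum clique of $G$ and a maximum clique of $H$, so the largest embeddable complete graph is $K_{c(G)}+K_{c(H)}=K_{c(G)+c(H)}$ and $c(G+H)=c(G)+c(H)$. I expect this step to be routine.

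For the multiplicative identity the lower bound is the easy half. If $A\subseteq V$ is a clique of $G$ with $|A|=c(G)$ and $B\subseteq W$ is a clique of $H$ with $|B|=c(H)$, then for distinct $(a,b),(a',b')\in A\times B$ either $a\neq a'$, whence $(a,a')\in E$, or else $a=a'$ and $b\neq b'$, whence $(b,b')\in F$; in either case the Zykov ``or'' condition makes $(a,b)$ and $(a',b')$ adjacent. Thus $A\times B$ is a clique and realizes $K_{c(G)}\cdot K_{c(H)}=K_{c(G)c(H)}\subseteq G\cdot H$, giving $c(G\cdot H)\geq c(G)c(H)$.

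The hard part will be the reverse inequality $c(G\cdot H)\leq c(G)c(H)$, i.e. showing that no clique of the Zykov product can exceed the product of the two clique numbers, and this is the step I expect to be the main obstacle. My plan would be to pass to complements: the identity $\overline{G\cdot H}=\overline G\,\osquare\,\overline H$ turns a maximum clique of $G\cdot H$ into a maximum independent set of the strong product $\overline G\,\osquare\,\overline H$, so the desired bound is equivalent to the multiplicativity $\alpha(\overline G\,\osquare\,\overline H)\le \alpha(\overline G)\,\alpha(\overline H)$ of the independence number $\alpha$ under the strong product. This is precisely where genuine difficulty enters: a maximum clique of $G\cdot H$ need not be of the product form $A\times B$, so projecting it onto the two factors does not obviously bound its size. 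Moreover the quantity $\alpha(\overline G\,\osquare\,\overline H)$ is exactly the object governing the Shannon capacity, and the pentagon already shows trouble, since $\alpha(C_5)=2$ while $\alpha(C_5\osquare C_5)=5>4$. I would therefore treat the upper bound as the crux and check carefully whether it genuinely holds for all $G,H$ or only after restricting to a subclass — for instance the complete graphs, where $K_n\cdot K_m=K_{nm}$ forces everything to be exact — rather than assume a clean clique projection that the ``or'' structure of the Zykov product does not, on its face, justify.
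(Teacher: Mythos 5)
Your two completed steps (the additive identity and the lower bound $c(G\cdot H)\ge c(G)c(H)$) are correct, but the hesitation in your last paragraph should be upgraded from a worry to a refutation: the upper bound does not merely lack an obvious proof, it is false, and the pentagon you mention is already the counterexample. Since $C_5$ is self-complementary, your complementation identity gives
\[
c(C_5\cdot C_5)=\alpha\bigl(\overline{C_5}\osquare\overline{C_5}\bigr)=\alpha(C_5\osquare C_5)\ge 5 > 4 = c(C_5)^2 .
\]
One can also see the $5$-clique directly in the Zykov product: with $E(C_5)=\{12,23,34,45,51\}$, the set $\{(1,1),(3,5),(5,4),(2,3),(4,2)\}$ is pairwise adjacent in $C_5\cdot C_5$, since for each pair either the first coordinates or the second coordinates form an edge. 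So the proposition as stated fails on the full Zykov ring; multiplicativity of the clique number under $\cdot$ is exactly the multiplicativity of the independence number under $\osquare$, i.e.\ the Shannon-capacity obstruction, precisely as you identified.

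Compare this with the paper's own proof, which is exactly the leap you declined to make: it checks $c(0)=0$, $c(1)=1$ and the identities $K_n+K_m=K_{n+m}$, $K_n\cdot K_m=K_{nm}$, and then asserts that the general identities ``follow.'' That inference is sound for the join (any clique of $G+H$ splits into a clique of $G$ joined to a clique of $H$, as you argue), but for the product it only establishes that $c$ is multiplicative on the subring generated by the complete graphs. What survives of the statement is this restricted version: if $\overline{G}$ and $\overline{H}$ are perfect (e.g.\ for the $P$-graphs and complete graphs that dominate the paper's example computations), then covering $\overline{G}\osquare\overline{H}$ by products of cliques gives $\alpha(\overline{G}\osquare\overline{H})\le\alpha(\overline{G})\,\alpha(\overline{H})$, and there $c$ is indeed multiplicative. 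So your write-up should prove the additive statement and the lower bound as you did, then present the pentagon as a counterexample to the multiplicative claim, and at most salvage the homomorphism property on such a subring.
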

\begin{proof}
We have $c(0)=c(\emptyset) = 0$ as the dimension of the empty graph is $-1$. Also $c(1)=c(K_1)=1$.
It follows from $K_n + K_m = K_{n+m}$ that $c(G + H) = c(G) + c(H)$ 
and from $K_n \cdot K_m$ that $c(G \times H) = c(G) c(H)$. To extend this to 
the entire ring, we have to postulate $c(-G)=-c(G)$ but the definition of the group
from the monoid assures that this extends to the additive group and so also to the ring. 
\end{proof} 

\paragraph{}
The {\bf Euler characteristic} of a graph $G$ is defined as $\sum_{k=0}^{\infty} (-1)^k v_k$, where $v_k$
is the number of $k$-dimensional complete subgraphs $K_{k+1}$ in $G$. It can also be written as 
the sum $\sum_{x} (-1)^{{\rm dim}(x)}$ over all simplices $x$ (complete subgraphs) in $G$. The Euler-Poincar\'e
identity tells that $\chi(G) = \sum_{k=0}^{\infty} (-1)^k b_k$, where $b_k={\rm dim}H^k(G)$ are the Betti 
numbers. They can be easily computed as the nullity ${\rm dim}({\rm ker}(L_k)$, where $L_k$ is the $k$'th block
in the Hodge Laplacian $H=(d+d^*)^2$. 
It follows from the K\"unneth formula $H^k(G \times H) = \oplus_{i+j=k} H^i(G) \otimes H^j(G)$ that
the Poincar\'e polynomial $p_G(x)=\sum_{k=0} b_k x^k$ satisfies $p_{G \times H}(x) = p_G(x) p_H(y)$ so that
$\chi(G)=p_G(-1)$ satisfies  $\chi(G \times H) = \chi(G) \chi(H)$. One can also give a direct inductive proof
of the product property of $\chi$ without invoking cohomology using Poincar\'e-Hopf \cite{KnillKuenneth}.  \\

\paragraph{}
The homotopy theory of graphs and finite abstract simplicial
complexes is parallel to the homotopy of geometric realizations but is entirely combinatorial. The adaptation of the Whitehead
definition to the discrete have been done in the 70ies, notably by Evako and Fiske. First define
inductively what a collapsible graph is: a graph $G$ is collapsible if there exists a vertex $x$ for which the unit sphere
$S(x)$  and $G \setminus x$ is collapsible. A homotopy step is the process of removing a vertex with contractible unit sphere
or then making a cone extension over a collapsible subgraph. A graph is contractible if it is homotopic to $K_1$. 
The homotopy of abstract simplicial complexes can then be defined through the homotopy of its Barycentric refinement,
which is the Whitney complex of a graph and therefore part of the graph theoretical contraction definition.
The discrete description has the advantage that it can be implemented easier on a computer.

\begin{lemma}[Homotopy lemma]
For any finite $H,G \in \G$, the strong product graph $H \osquare G$ is homotopic to the Cartesian
simplex product $H \times G$.
\end{lemma}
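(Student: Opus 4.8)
The plan is to compare the two products through the face posets of their Whitney (clique) complexes and to extract the homotopy equivalence from a Quillen fiber argument, so that no explicit vertex-by-vertex deformation is needed. First I would pin down the clique complex of the strong product. Writing $\pi_1,\pi_2$ for the two coordinate projections on $V(H)\times V(G)$, two distinct vertices $(a,b),(c,d)$ are $\osquare$-adjacent precisely when $a,c$ are equal or adjacent in $H$ and $b,d$ are equal or adjacent in $G$. Hence a set $S$ is a simplex of $H\osquare G$ iff $\pi_1(S)$ is a simplex of $H$ and $\pi_2(S)$ is a simplex of $G$: the projections of a clique are complete, and conversely any such $S$ lies inside the complete product set $\sigma\times\tau$ with $\sigma=\pi_1(S),\tau=\pi_2(S)$, so it is itself complete. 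In particular the maximal simplices of $H\osquare G$ are exactly the products $\sigma\times\tau$ of maximal simplices. On the other side, by construction the Whitney complex of the Cartesian simplex product is the order complex $\Delta(c(H)\times c(G))$ of the product of the two simplex posets, because $(H\times G)_1$ is the comparability graph of $c(H)\times c(G)$.

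Next I would introduce the order-preserving poset map $f\colon c(H\osquare G)\to c(H)\times c(G)$, $S\mapsto(\pi_1(S),\pi_2(S))$, which is well defined by the previous step and clearly monotone. Its order complex realizes, on the domain side, the Barycentric refinement $(H\osquare G)_1$, whose realization satisfies $|(H\osquare G)_1|\cong|H\osquare G|$, and on the target side exactly $(H\times G)_1$. The key computation is the fiber: for a vertex $(\sigma,\tau)$ of $c(H)\times c(G)$ the fiber over the principal down-set $f^{-1}\{(\sigma',\tau')\le(\sigma,\tau)\}$ is the set of nonempty $S\subseteq\sigma\times\tau$, a poset with top element $\sigma\times\tau$, hence with contractible order complex. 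By Quillen's fiber lemma (Theorem A for posets) the induced map on order complexes is a homotopy equivalence $|(H\osquare G)_1|\simeq|(H\times G)_1|$, and therefore $|H\osquare G|\simeq|(H\times G)_1|=|H\times G|$.

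The clique-complex identification is routine; the content sits in the fiber lemma, which is exactly what turns the strong-product combinatorics into the product-poset combinatorics of the Stanley--Reisner side. The main obstacle I anticipate is bridging this topological homotopy equivalence of realizations with the purely combinatorial (Evako) notion of graph homotopy named in the lemma: I would either invoke the standard equivalence between the two notions for Whitney complexes, or upgrade the contractible fibers—each a cone with apex $\sigma\times\tau$—into explicit collapses and assemble them, via a discrete Morse matching induced by $f$, into a sequence of admissible homotopy steps. A second, self-contained route avoiding posets would cover both $|H\osquare G|$ and $|H\times G|$ by the contractible closed cells $\{\sigma\times\tau\}$, observe that a multiway intersection is nonempty iff all first coordinates meet and all second coordinates meet, so that the two nerves coincide, and then apply the Nerve Lemma to obtain the same homotopy type for both.
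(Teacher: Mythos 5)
Your topological core is correct and is a genuinely different route from the paper's. The paper passes to the Barycentric refinement $(H\osquare G)_1$ and then performs explicit Whitehead moves inductively in dimension: for each block $x\osquare y$ coming from simplices $x$ of $H$ and $y$ of $G$, it cones a new center vertex over the appropriate subgraph, then deletes the interior vertices, ending with $x\times y$; iterating over all blocks produces $H\times G$. You instead identify the clique complex of $H\osquare G$ (your characterization that $S$ is a clique iff $\pi_1(S)$ and $\pi_2(S)$ are cliques is correct) and the Whitney complex of $(H\times G)_1$ as the order complex of the product poset $c(H)\times c(G)$, and then apply Quillen's fiber lemma to $S\mapsto(\pi_1(S),\pi_2(S))$; the fibers over principal down-sets are the posets of nonempty subsets of $\sigma\times\tau$, which have a maximum, so they are cones and contractible. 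This, like your nerve-lemma variant, is a clean and correct proof that the geometric realizations of $H\osquare G$ and $(H\times G)_1$ are homotopy equivalent, with no case-by-case deformation.

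However, there is a genuine gap at exactly the point you flag, and your first proposed bridge cannot close it. The lemma's notion of ``homotopic'' is the combinatorial (Evako/Whitehead) one defined earlier in the paper: a sequence of vertex deletions with contractible unit sphere and cone extensions. There is no ``standard equivalence between the two notions'': graph homotopy equivalence implies homotopy equivalence of realizations, but the converse fails. Each admissible graph move glues or removes a cone over a contractible subcomplex, and since finite contractible complexes have trivial Whitehead torsion, graph homotopy equivalence forces \emph{simple} homotopy equivalence of the Whitney complexes; homotopy equivalent finite complexes need not be simple homotopy equivalent (lens spaces $L(7,1)$ and $L(7,2)$), so topological homotopy equivalence alone can never yield the combinatorial conclusion. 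Your second bridge --- converting the cone fibers into explicit collapses/expansions via a discrete Morse matching induced by $f$ --- is the right repair, and the existence of a maximum in every fiber makes it plausible; but note that intermediate complexes of a collapsing sequence need not be Whitney complexes of graphs, so even this must be organized into admissible graph moves. Carrying that out is precisely the content of the paper's explicit cone-and-delete construction, so as written your proof defers the step in which the lemma's actual (combinatorial) content lives, rather than completing it.
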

\begin{proof}
The graph $H \osquare G$ is homotopic to its Barycentric refinement $(H \osquare G)_1$.
Now deform each maximal simplex $x \osquare y$ to a simplex $x \times y$ starting with one
dimensional simplices, then turning to triangles etc. These Whitehead deformation moves
can best be seen in an Euclidean embedding but they can be done entirely combinatorially:
first add a new vertex $m$ in the center of $x \osquare y$ connecting with all vertices of
the week product $x \square y$ and all interior vertices as well as all vertices connecdted
to those interior vertices. Now remove all interior vertices together with their connections.
We end up with $x \times y$. After doing this for all $x \times y$ of dimension $d$, continue
with dimension $d+1$ etc until everything is deformed.
\end{proof}

\begin{propo}[Euler characteristic as ring homomorphism from the strong product]
Euler characteristic is a ring homomorphisms from the 
Zykov ring $(\G,\oplus,\otimes)$ to $\mathbb{Z}$. 
\end{propo}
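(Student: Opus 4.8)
The plan is to verify the two defining properties of a ring homomorphism separately---additivity with respect to the disjoint union $\oplus$ and multiplicativity with respect to the strong product $\osquare$---and to check that the unit and zero are preserved. The two substantial inputs are already in hand: the Homotopy lemma, which identifies $G \osquare H$ up to homotopy with the Cartesian simplex product $G \times H$, and the K\"unneth relation $\chi(G \times H) = \chi(G) \chi(H)$ recorded above. The only genuinely new ingredient to supply is that $\chi$ is a homotopy invariant, after which the argument is essentially assembly.

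First I would dispatch the additive structure. On the generating monoid $(\G_0,\oplus)$ the simplex counts are additive, $v_k(G \oplus H) = v_k(G) + v_k(H)$, since a complete subgraph of a disjoint union lies entirely in one summand; summing the alternating series then gives $\chi(G \oplus H) = \chi(G) + \chi(H)$. With $\chi(\emptyset)=0$ and $\chi(K_1)=1$ the zero and one are respected. To pass to the Grothendieck group $(\G,\oplus)$ I would set $\chi(G \ominus H) = \chi(G) - \chi(H)$; this is well defined precisely because $\chi$ is already a monoid homomorphism, so it descends to the group completion exactly as in the valuation extension described earlier in the text.

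For multiplicativity I would argue as follows. Because the Betti numbers $b_k = \dim H^k(G)$ are homotopy invariants and $\chi(G) = \sum_k (-1)^k b_k$ by Euler--Poincar\'e, the Euler characteristic is constant along the combinatorial homotopy moves (removal of a vertex with contractible unit sphere, and cone extensions over collapsible subgraphs). The Homotopy lemma then yields $\chi(G \osquare H) = \chi(G \times H)$, and the K\"unneth relation gives $\chi(G \times H) = \chi(G)\chi(H)$; chaining these produces $\chi(G \osquare H) = \chi(G)\chi(H)$. Extending this identity from honest graphs to arbitrary ring elements $G \ominus H$ is then a formal distributive computation, using the expansion of $(A \ominus B) \osquare (C \ominus D)$ recorded in Section 2 together with the additivity just established.

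The hard part will be justifying the homotopy invariance of $\chi$ cleanly against the combinatorial (rather than topological) definition of homotopy used here, and---a step easy to overlook---confirming that the maximal-simplex deformations in the Homotopy lemma genuinely assemble into a homotopy of the entire complex rather than merely deforming each cell in isolation, so that the single global invariant $\chi$ actually transfers. Once homotopy invariance of $\chi$ is granted, everything else reduces to the two imported results plus routine bookkeeping on the group completion.
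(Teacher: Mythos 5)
Your proposal is correct and rests on the same two pillars as the paper's own proof: the Homotopy lemma identifying $G \osquare H$ with the Cartesian simplex product $G \times H$ up to homotopy (you rightly read the proposition's ``$(\G,\oplus,\otimes)$'' as the strong ring), and the multiplicativity of $\chi$ for $\times$. Where you differ is in how that multiplicativity is certified. You route it through cohomology: Betti numbers are invariant under the combinatorial homotopy moves, Euler--Poincar\'e gives $\chi=\sum_k(-1)^k b_k$, and K\"unneth gives $\chi(G\times H)=\chi(G)\chi(H)$. The paper instead uses the direct combinatorial factorization
\begin{equation*}
\chi(G \times H) \;=\; \sum_{(x,y)} (-1)^{\dim(x)+\dim(y)} \;=\; \Bigl(\sum_{x \in G} (-1)^{\dim(x)}\Bigr)\Bigl(\sum_{y \in H} (-1)^{\dim(y)}\Bigr) \;=\; \chi(G_1)\,\chi(H_1),
\end{equation*}
together with the Poincar\'e--Hopf observation that $\sum_x(-1)^{\dim(x)}$ is the Euler characteristic of the Barycentric refinement, so $\chi(G_1)=\chi(G)$. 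The paper's route is more elementary (no cohomology at all), while yours buys more: invoking K\"unneth gives compatibility of the whole Poincar\'e polynomial, not just its evaluation at $-1$. Two further remarks. First, the step you flag as ``the hard part''---homotopy invariance of $\chi$ against the combinatorial homotopy definition---does not need cohomology: removing a vertex $x$ with contractible unit sphere changes $\chi$ by $1-\chi(S(x))=0$, since $\chi(G)=\chi(G\setminus x)+1-\chi(S(x))$; this elementary count is the invariance the paper implicitly relies on, and using it would make your argument self-contained at the same level as the paper's. Second, your worry about whether the cell-by-cell deformations ``assemble'' is a concern about the Homotopy lemma itself, which is proved earlier and may be used as a black box here; it is not a gap in your argument for this proposition. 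Your explicit treatment of additivity, preservation of $0$ and $1$, and the descent to the Grothendieck group completion is routine but correct, and is bookkeeping the paper's proof omits entirely.
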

\begin{proof}
The Euler characteristic is a homomorphism for the Cartesian simplex ring to 
the integers \cite{KnillKuenneth}. The argument there was to factor
$\chi(G \times H) = \sum_{ (x,y) \subset G \times H} (-1)^{{\rm dim}(x) + {\rm dim}(y)}$
as $(\sum_{x \in G}  (-1)^{dim(x)}) (\sum_{y \in H}  (-1)^{dim(y)})$ so
so that $\chi(G \times H) = \chi(G_1) \chi(H_1)$,
where $G_1$ and $H_1$ are the Barycentric refinements of $G$ and $H$. 
But the formula $\sum_x (-1)^{{\rm dim}(x)}$ defining
the Euler characteristic of $G$ is a Poincar\'e-Hopf formula for the 
Morse function $f(x) = {\rm dim}(x)$ on the vertex set 
of the Barycentric refinement $G_1$. 
\end{proof}

{\bf Remarks.} \\
{\bf 1)} Already small examples show that the other products, the weak and direct 
products have no compatibility whatsoever with Euler characteristic.
An example is $K_2 \osquare K_2$. \\
{\bf 2)} The {\bf Wu characteristic} \cite{Wu1953,valuation},
$$ \omega(G) = \sum_{x \sim y} \omega(x) \omega(y) $$ 
with $\omega(x) = (-1)^{{\rm dim}(x)}$, where the sum is over all 
intersecting simplices $x,y$ is not a homotopy invariant. 
We know that $\omega(G \times H) = \omega(G) \omega(H)$ for the Cartesian product,
but the multiplicativity fails in general for all other products. 

\begin{figure}[!htpb]
\scalebox{0.2}{\includegraphics{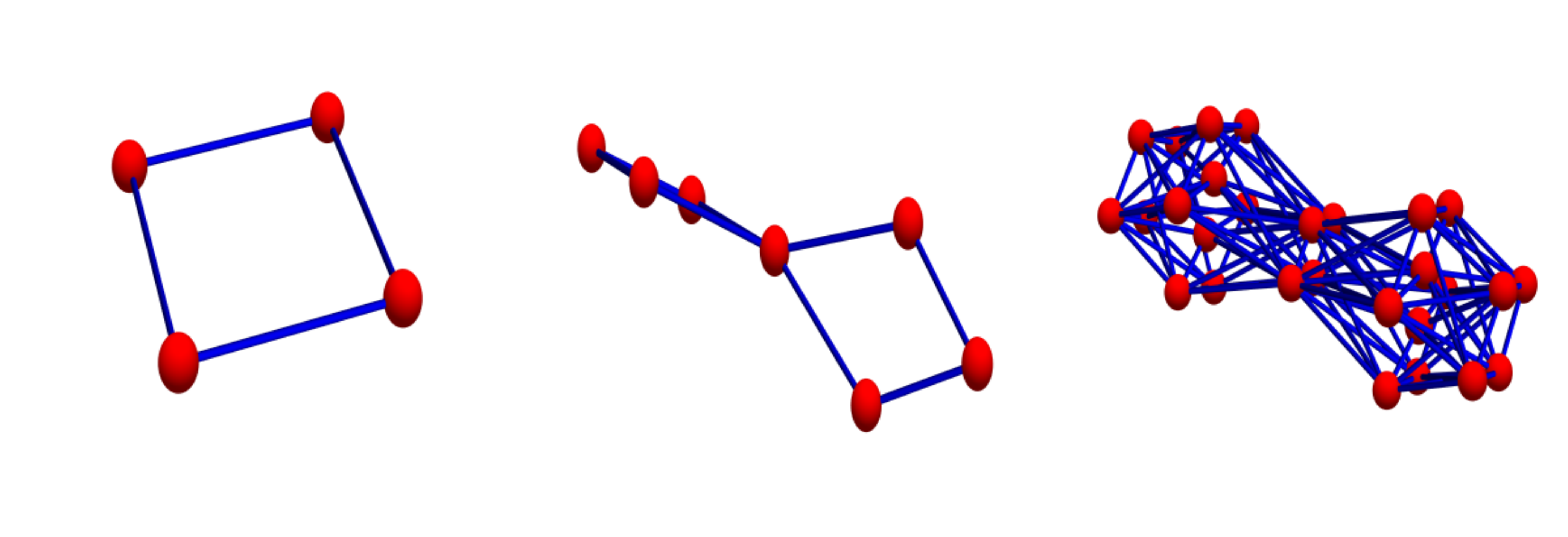}} 
\scalebox{0.2}{\includegraphics{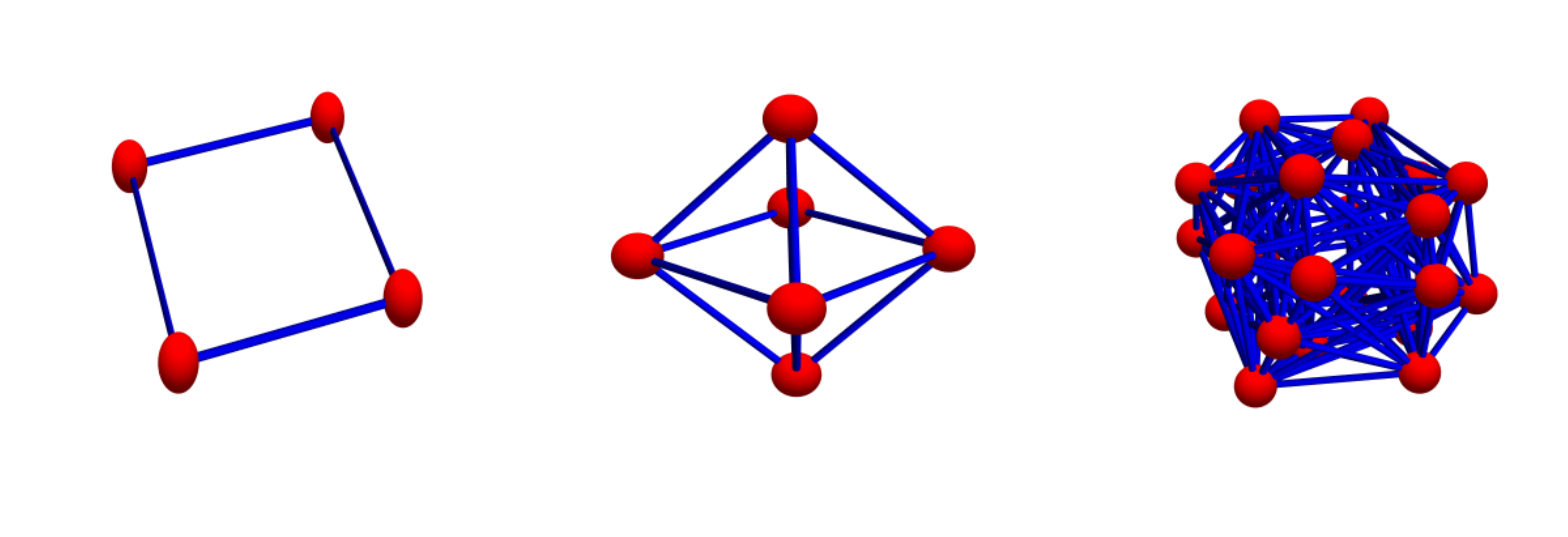}} 
\scalebox{0.2}{\includegraphics{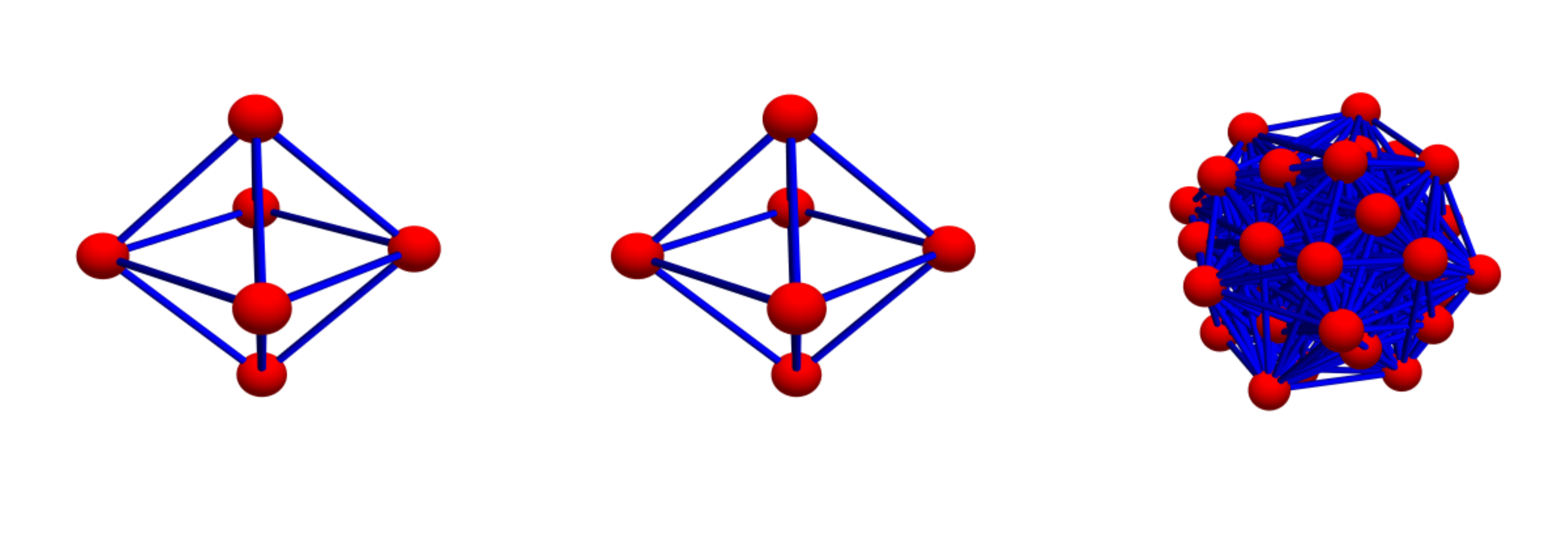}} 
\scalebox{0.2}{\includegraphics{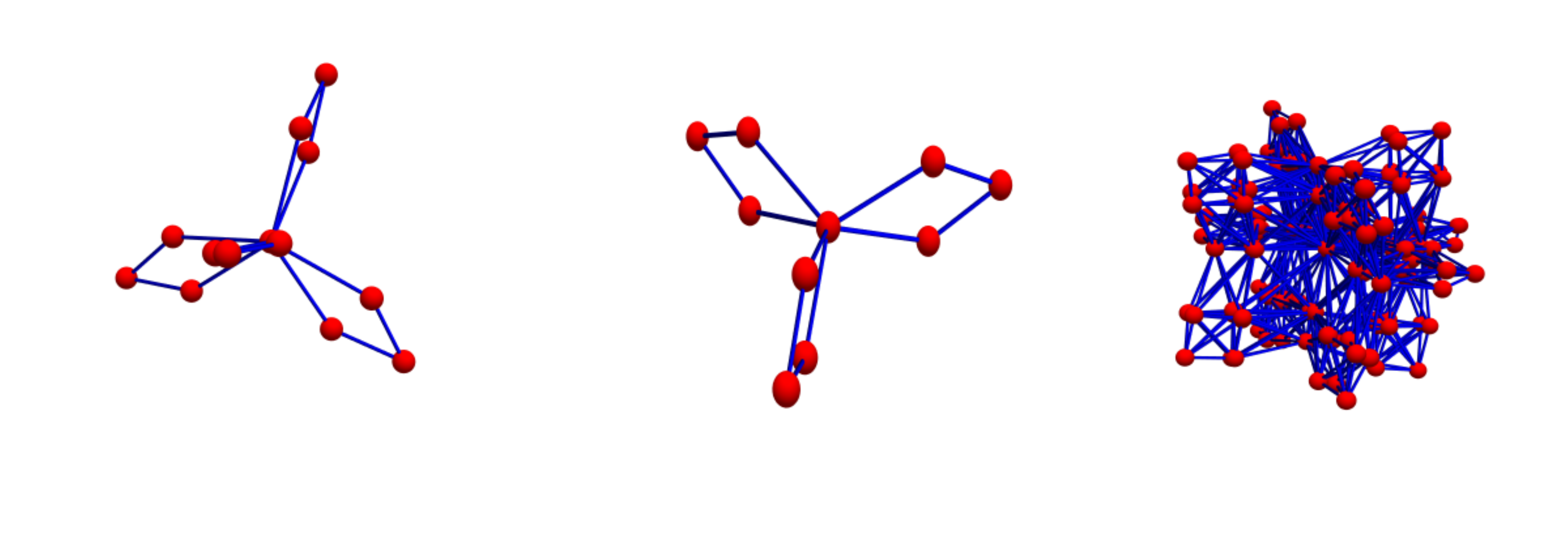}} 
\caption{
Let $B_k$ is a bouquet with $k$ flowers and O is the octahedron. 
We see first the strong product of $C_4,B_2$ where the Poincar\'e polynomial
identity is $(1+x)(1+2x) = 1+3x+2x^2$, then the strong product of 
$C_4,O$ with $(1+x)(1+x^2) = 1+x+x^2+x^3$, then the strong product of 
$O,O$ with $(1+x^2)(1+x^2) = 1+2x^2+x^4$ and  finally the strong product 
$B_2, B_3$ with $(1+3x)(1+4x) = 1+7x+12x^2$. In the 
last case, the f-vector is $(130, 700, 768, 192)$ and the 
Euler characteristic $130-700+768-192=6$ which matches $b_0-b_1+b_2=1-7+12=6$
as the general Euler-Poincar\'e formula shows. 
}
\end{figure}

\paragraph{}
Here is a summary about properties

\begin{tiny} \begin{tabular}{|l|l|l|l|l|} \hline
Operation       &     Clique number       &   Euler characteristic  & index           \\ \hline \hline
Union $\oplus$  &        -                &   additive              &                 \\ \hline
Join  +         &     additive            &                         & multiplicative  \\ \hline
\end{tabular} \end{tiny}

For the graph Cartesian product $\times$ or the tensor product $\otimes$
only the already mentioned additivity of Euler characteristic for disjoint union
or the trivial max-plus property of dimension for disjoint addition holds. 

\begin{tiny} \begin{tabular}{|l|l|l|l|l|l|} \hline
Operation       &    maximal dimension   &  Clique number       &   Euler characteristic  & index           \\ \hline \hline
$\cdot$         &      -                 &  multiplicative      &                         &                 \\ \hline 
$\times$        &    additive            &     -                &   multiplicative        &                 \\ \hline 
$\osquare$      &    additive            &     -                &   multiplicative        &                 \\ \hline 
$\otimes$       &      -                 &     -                &        -                &    -            \\ \hline 
$\square$       &      -                 &     -                &        -                &    -            \\ \hline 
\end{tabular} \end{tiny}

\begin{figure}[!htpb]
\scalebox{0.3}{\includegraphics{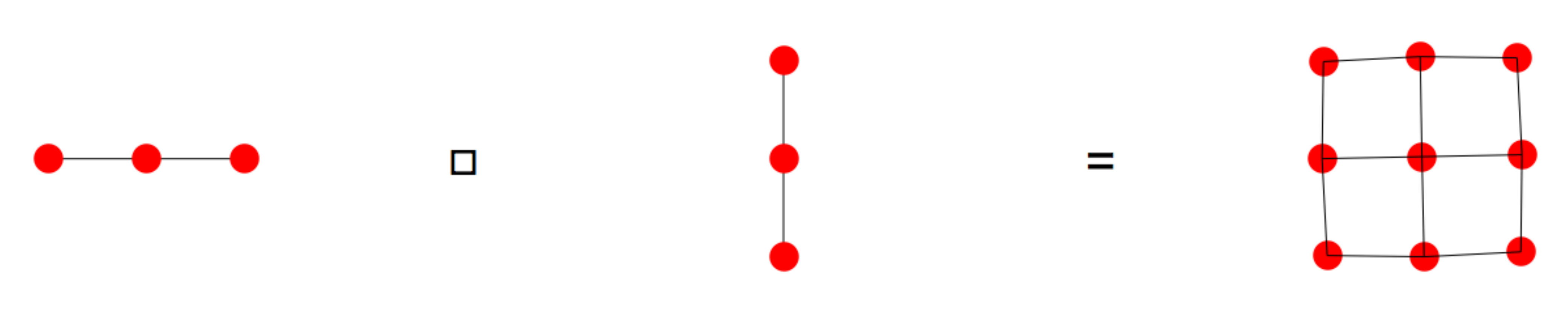}} 
\scalebox{0.3}{\includegraphics{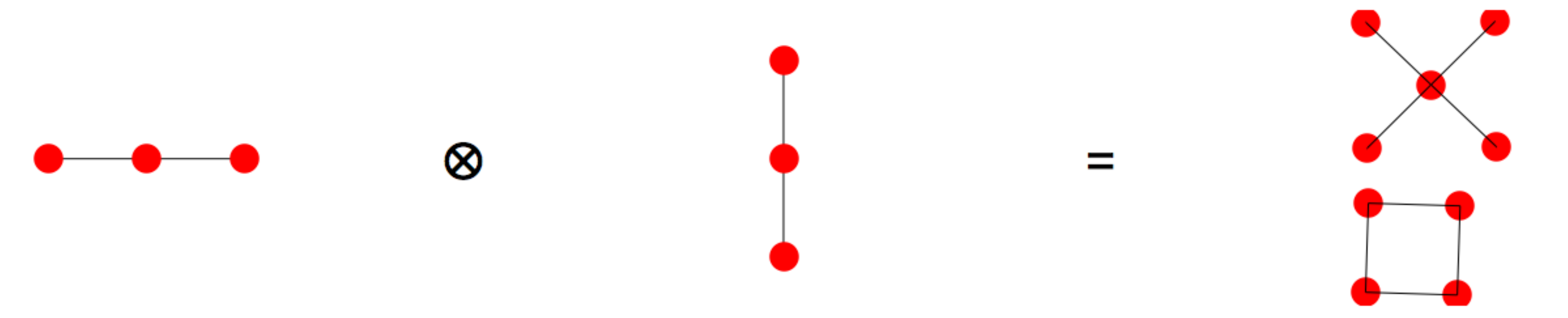}} 
\scalebox{0.3}{\includegraphics{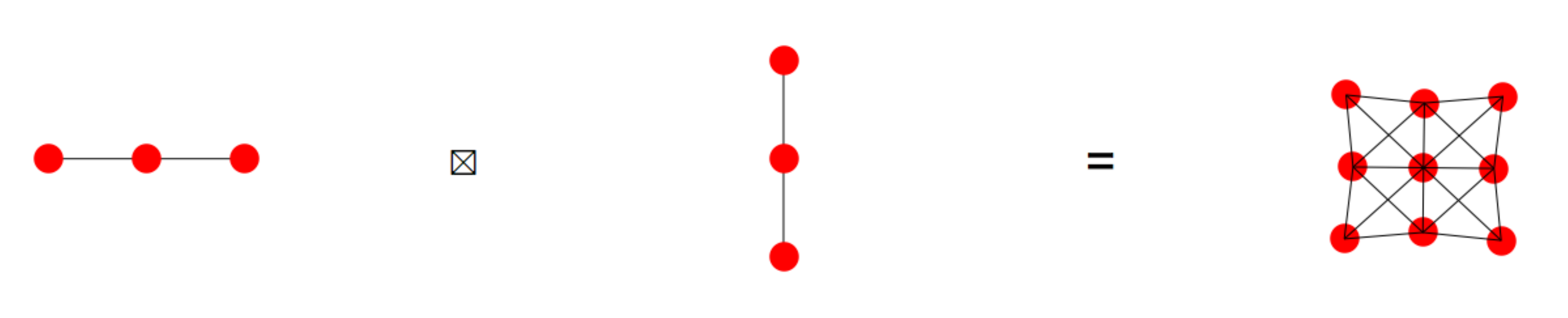}} 
\scalebox{0.3}{\includegraphics{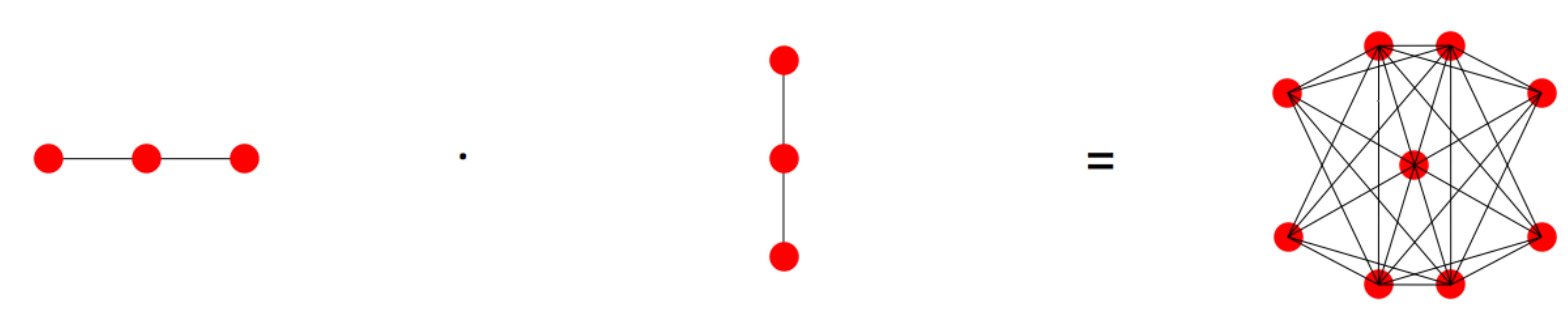}} 
\scalebox{0.3}{\includegraphics{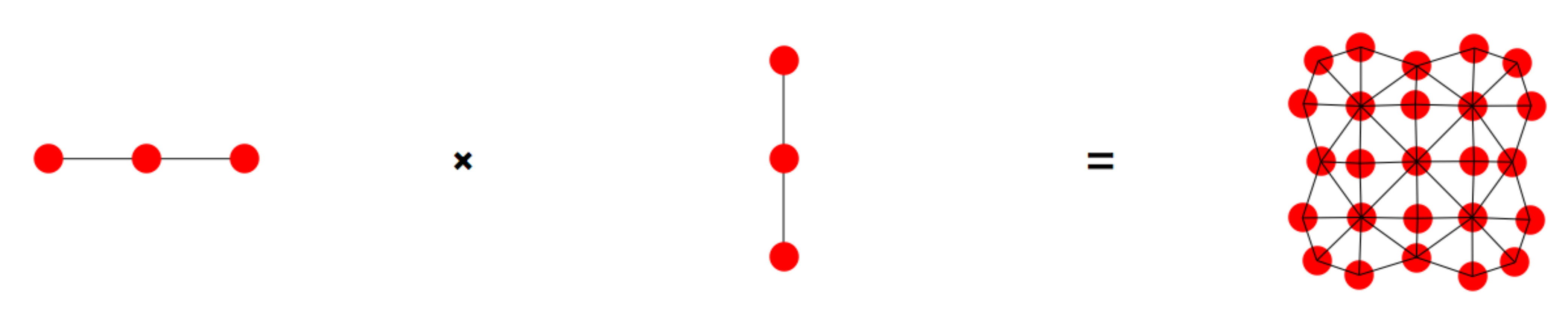}}
\caption{
The five multiplications of $L_3$ with $L_3$.
Only in the Cartesian ring case do we have a topological 
disk of dimension $2$ and Euler characteristic $1$. But as
$G \times H$ is a homotopic to Barycentric refinement of the
strong product, we have also Euler characteristic multiplicative. }
\end{figure}

\begin{figure}[!htpb]
\scalebox{0.3}{\includegraphics{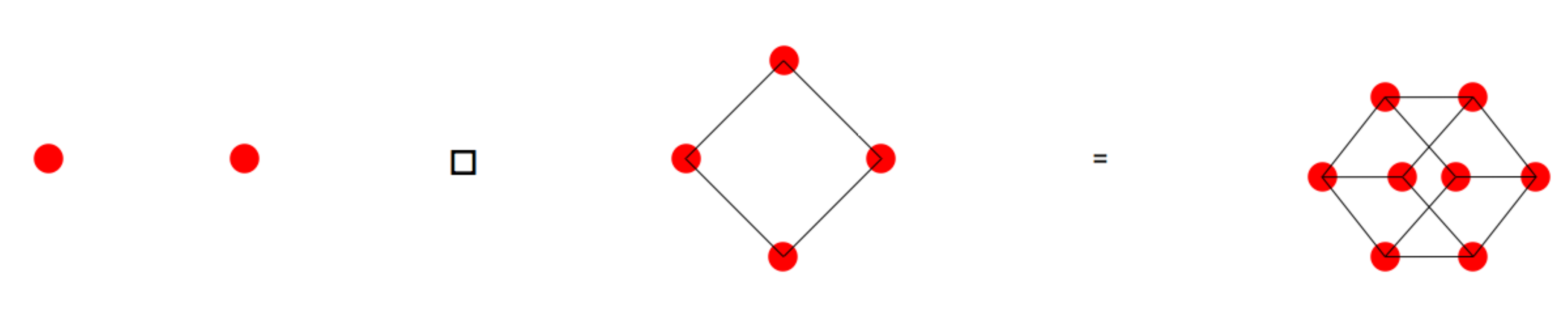}}
\scalebox{0.3}{\includegraphics{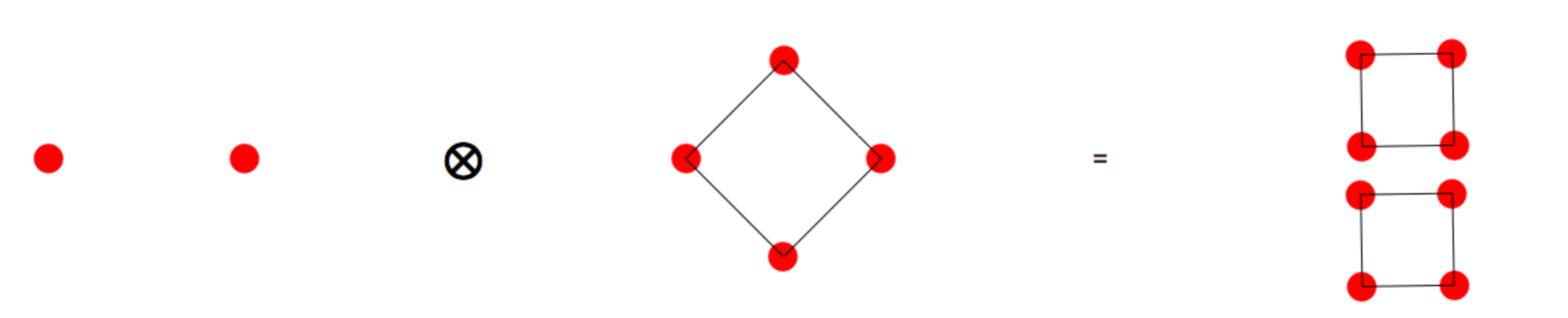}}
\scalebox{0.3}{\includegraphics{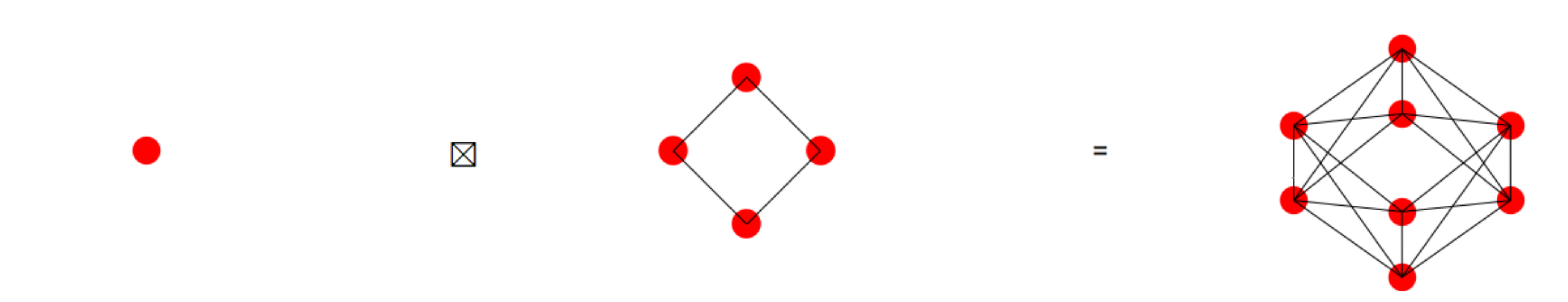}} 
\scalebox{0.3}{\includegraphics{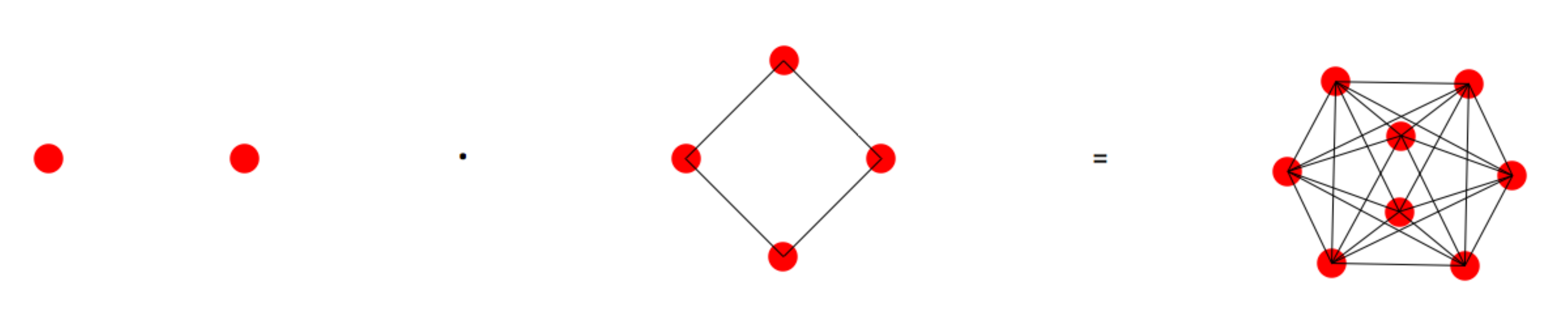}}
\scalebox{0.3}{\includegraphics{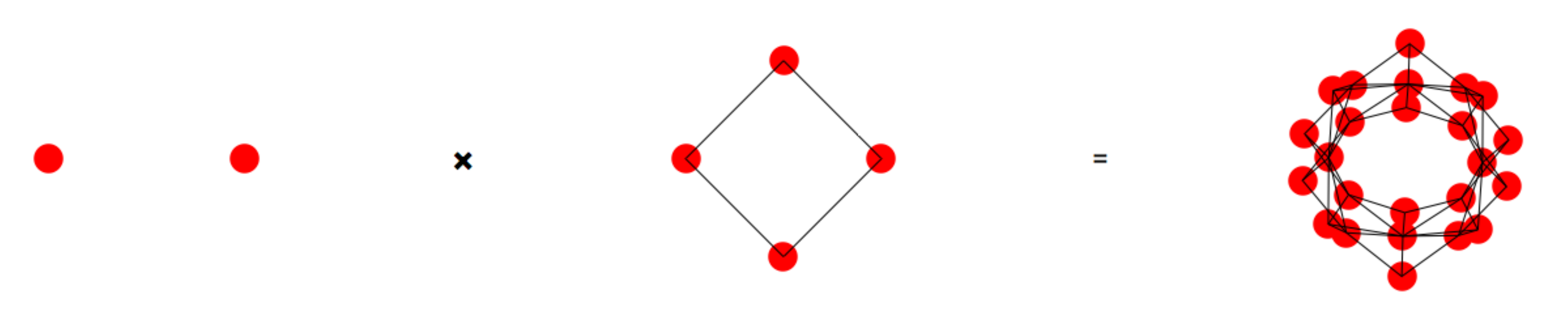}}
\caption{
The five multiplications of $K_2$ with $C_4$.
Only in the Cartesian product case, we see a cylinder of 
dimension $2$ and Euler characteristic $0$. The strong product is also
a cylinder but has larger dimension. 
}
\end{figure}

\begin{figure}[!htpb]
\scalebox{0.3}{\includegraphics{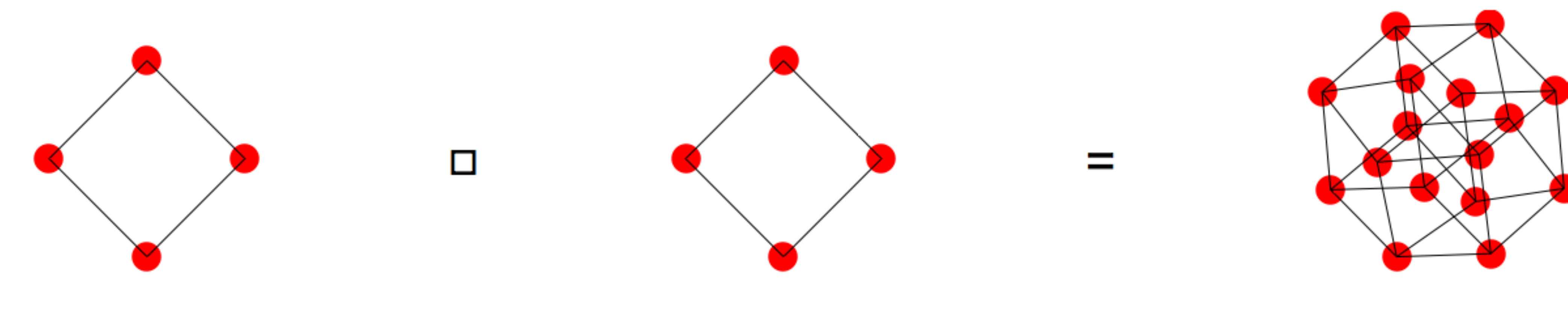}}
\scalebox{0.3}{\includegraphics{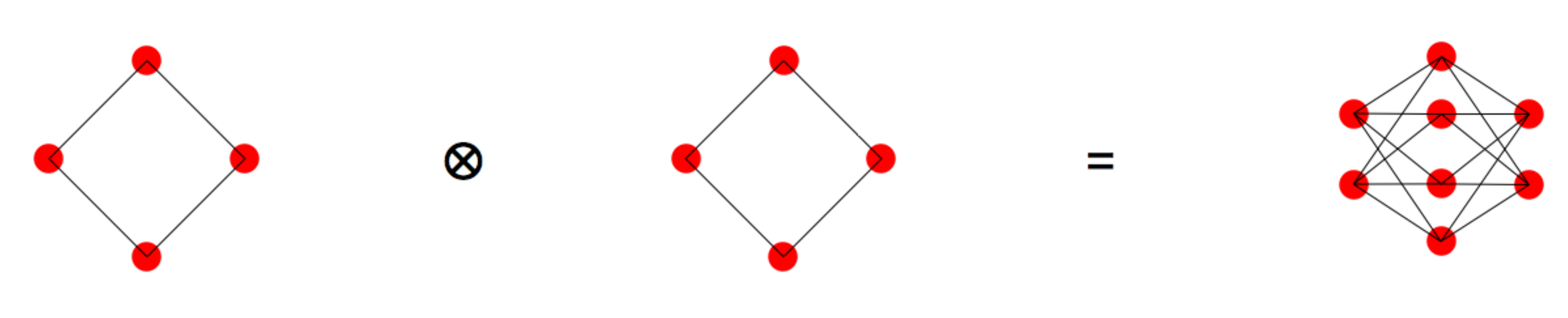}}
\scalebox{0.3}{\includegraphics{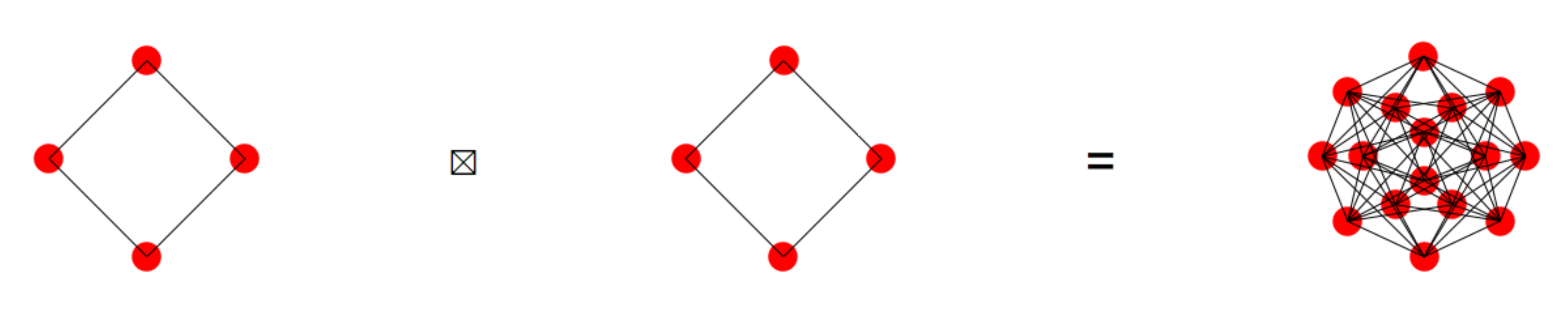}} 
\scalebox{0.3}{\includegraphics{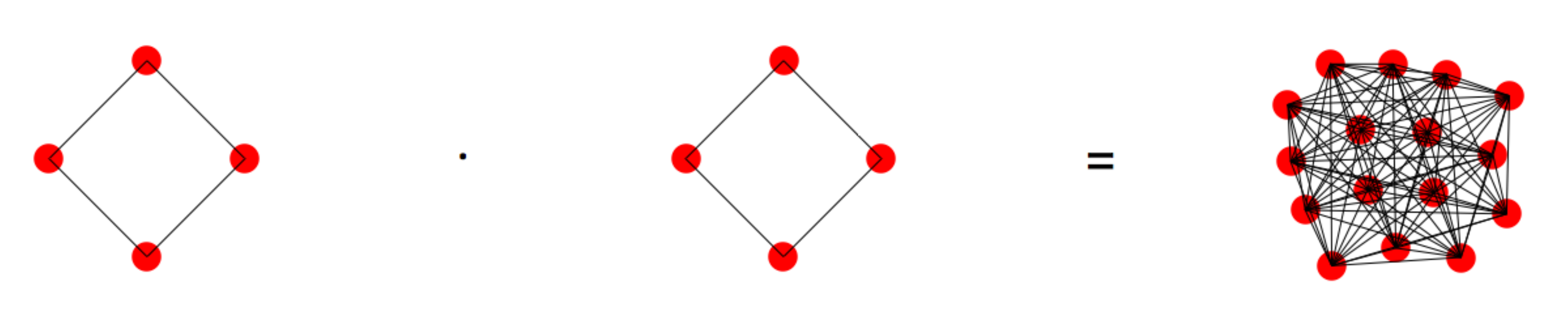}}
\scalebox{0.3}{\includegraphics{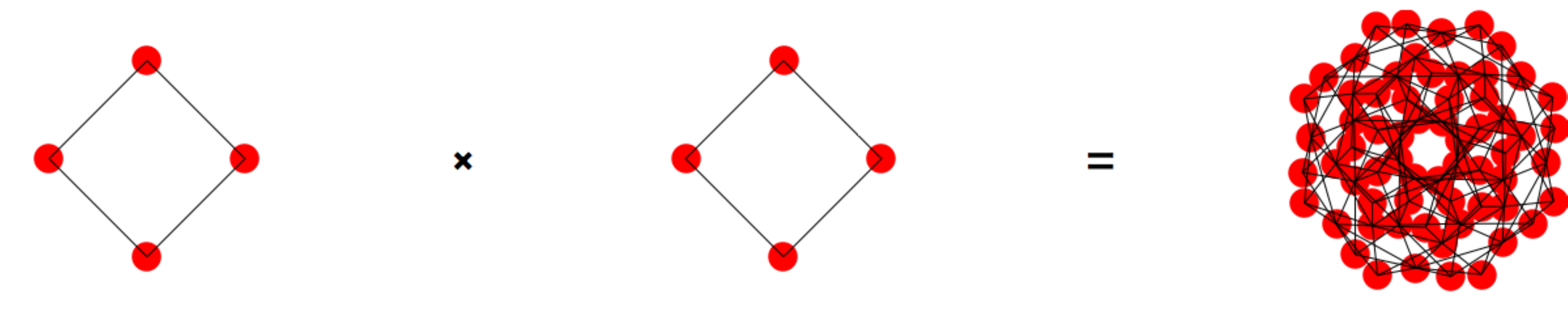}}
\caption{
The multiplication of $C_4$ with $C_4$ in the four rings.
Only in the Grothendieck case do we see a torus of dimension $2$
and Euler characteristic $0$. 
}
\end{figure}

\paragraph{}
{\bf Small examples:}  \\

{\bf 1)} $K_2 \oplus K_2 = P_2 \times P_2$  \\
{\bf 2)} $K_2 + K_2 = K_4$ \\
{\bf 3)} $K_2 \cdot K_2 = K_4$ \\
{\bf 4)} $K_2 \otimes K_2 = C_4$ \\
{\bf 5)} $K_2 \times K_2  = W_6$ \\
{\bf 6)} $K_2 \square K_2   = C_4$ \\
{\bf 7)} $K_2 \osquare K_2  = K_4$

{\bf 1)} $K_2 \oplus K_3$   \\
{\bf 2)} $K_2 + K_3 = K_5$ \\
{\bf 3)} $K_2 \cdot K_3 = C_6$ \\
{\bf 4)} $K_2 \otimes K_3$ \\
{\bf 5)} $K_2 \times K_3$ \\
{\bf 6)} $K_2 \square K_3$ \\
{\bf 7)} $K_2 \osquare K_3 = K_6$

\begin{figure}[!htpb]
\scalebox{0.1}{\includegraphics{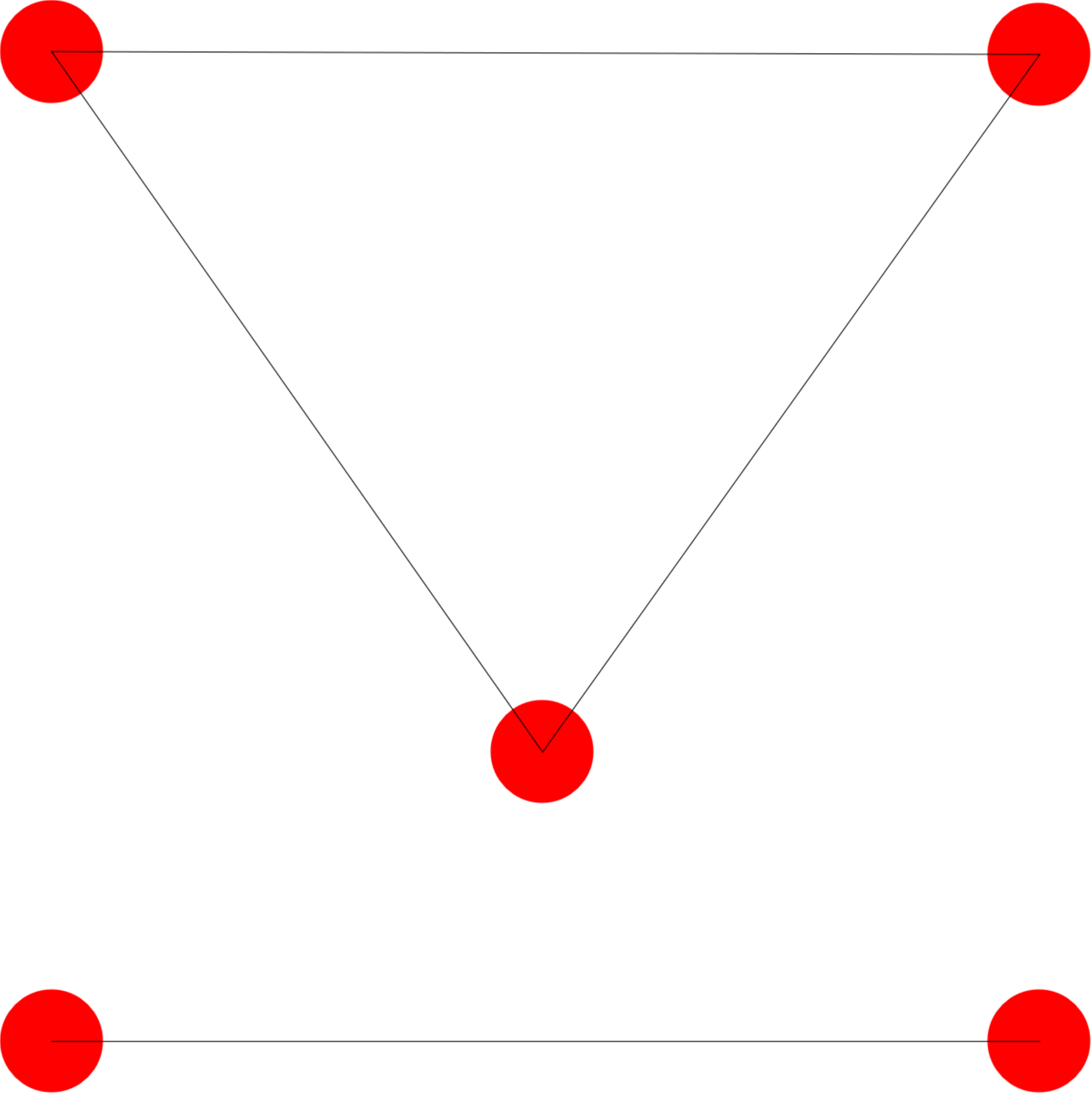}}
\scalebox{0.1}{\includegraphics{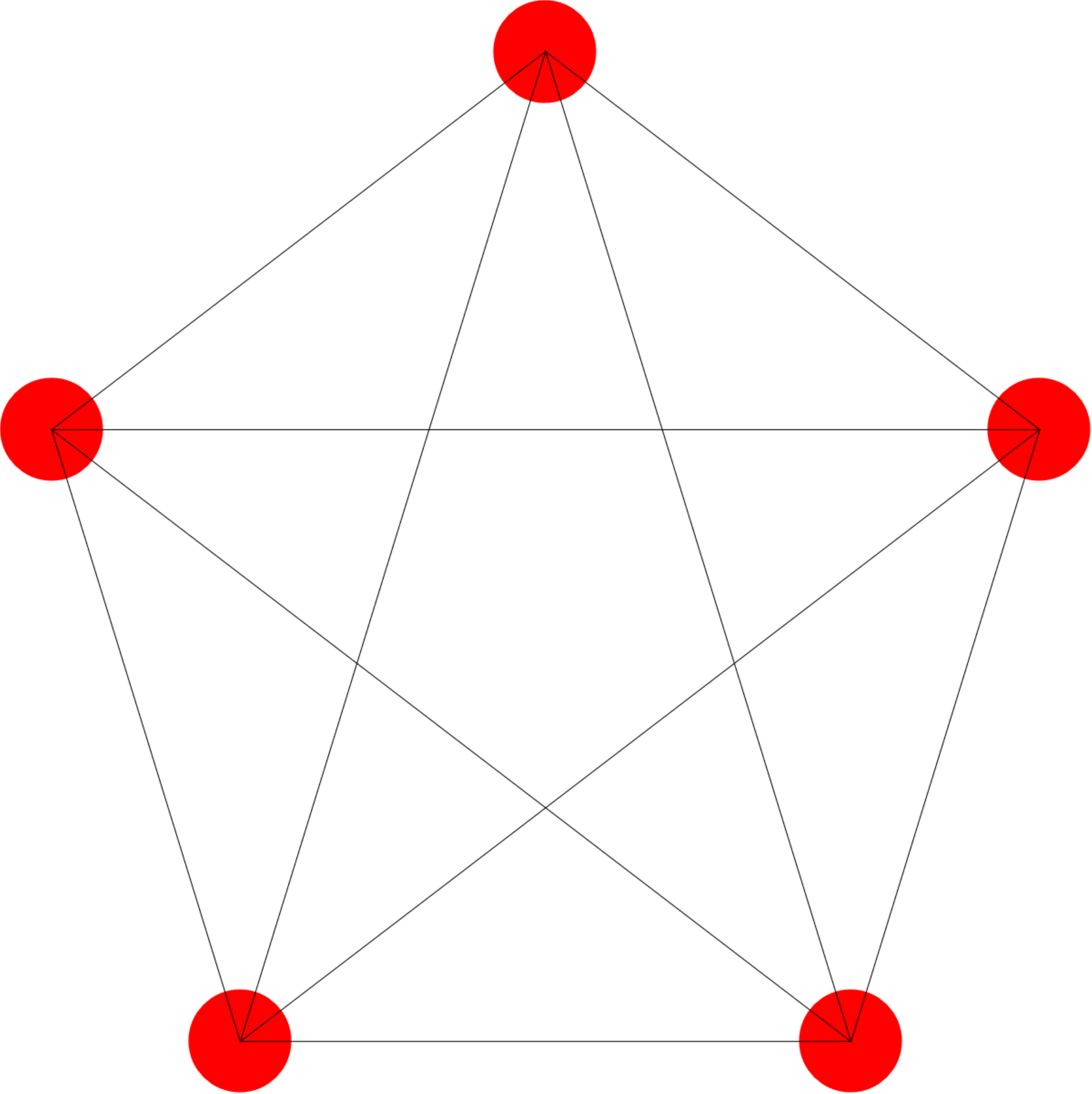}}
\caption{
The two graph additions combining $K_2$ with $K_3$.
}
\end{figure}

\begin{figure}[!htpb]
\scalebox{0.1}{\includegraphics{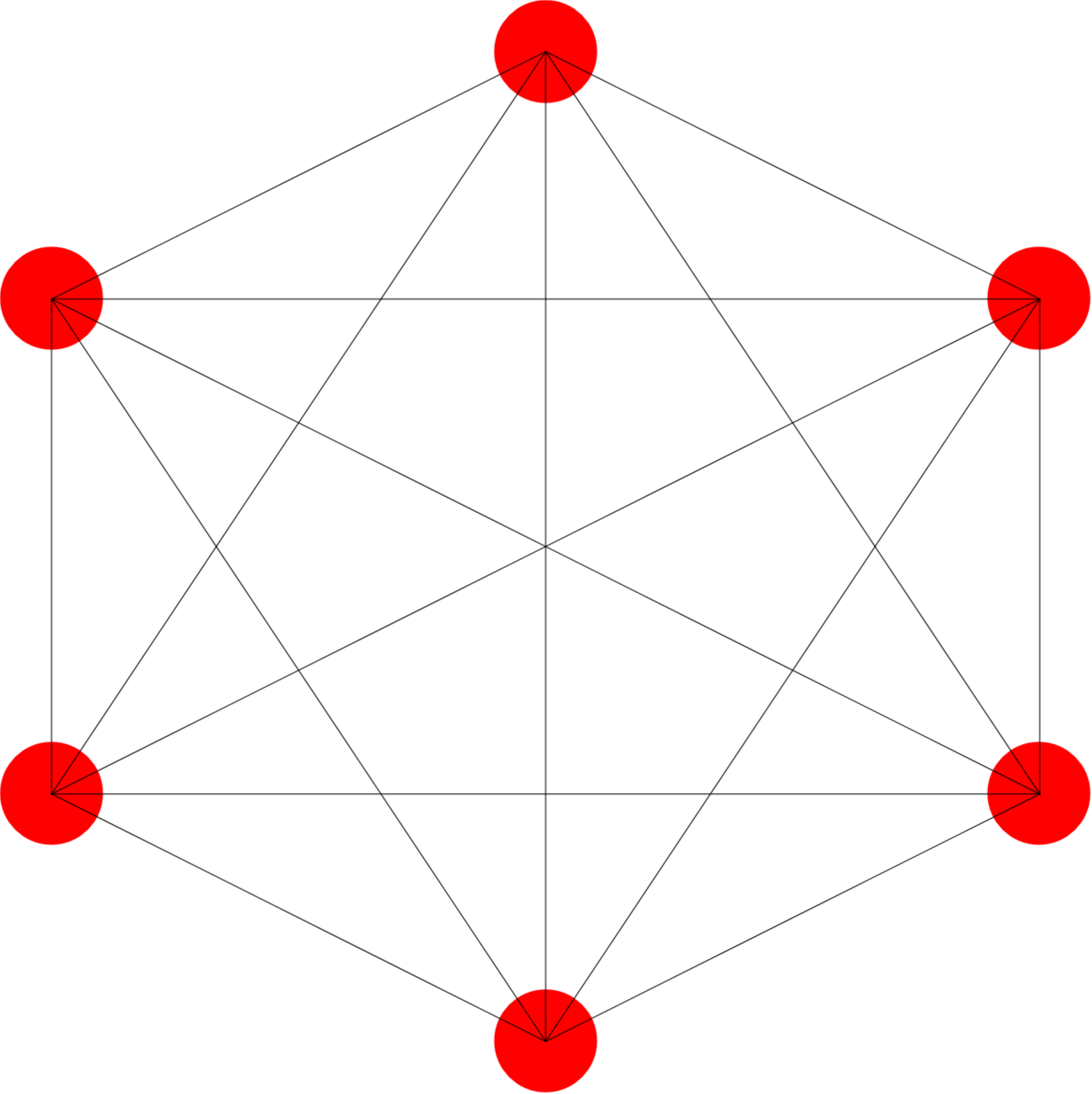}}
\scalebox{0.1}{\includegraphics{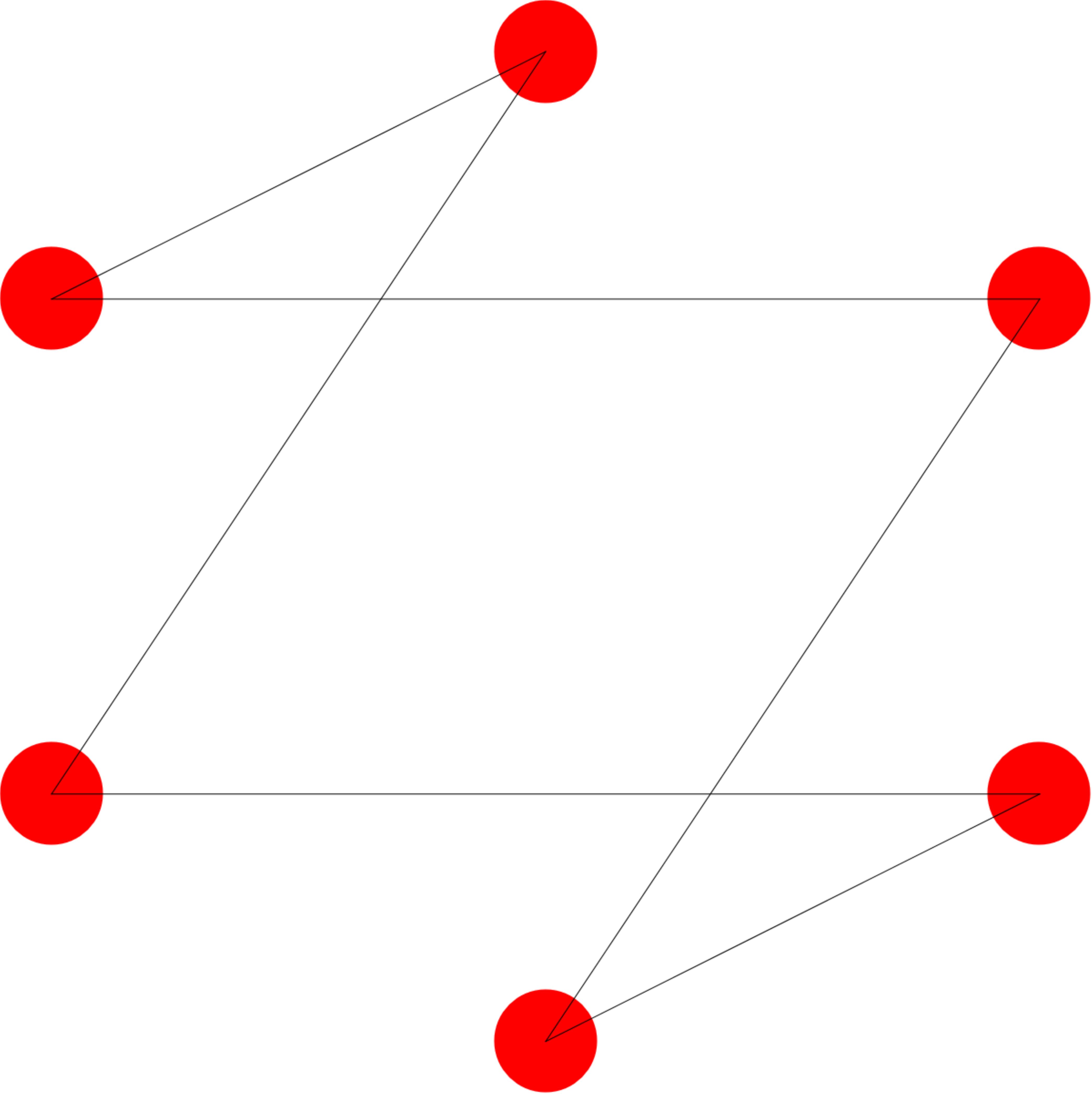}}
\scalebox{0.1}{\includegraphics{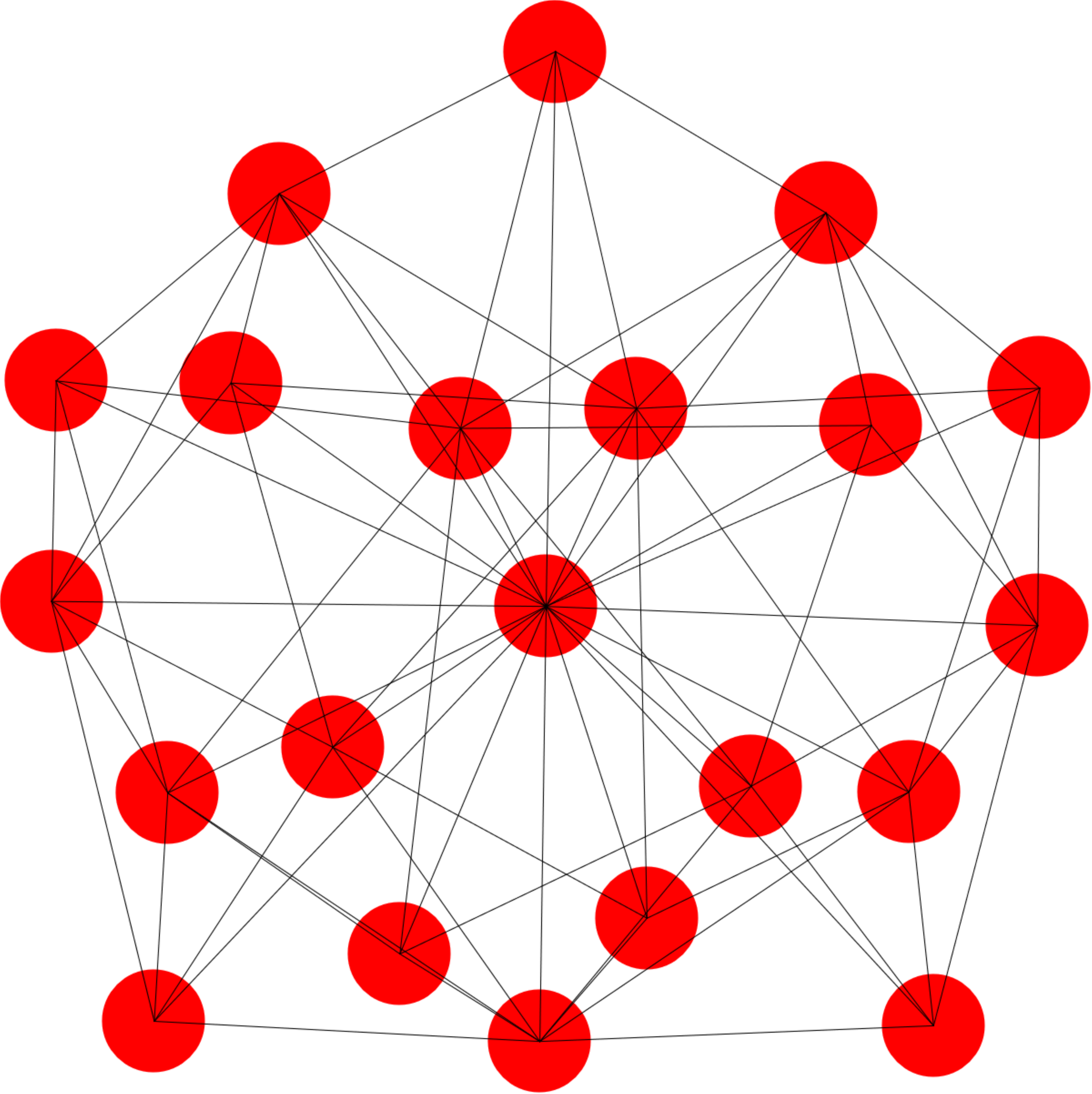}}
\scalebox{0.1}{\includegraphics{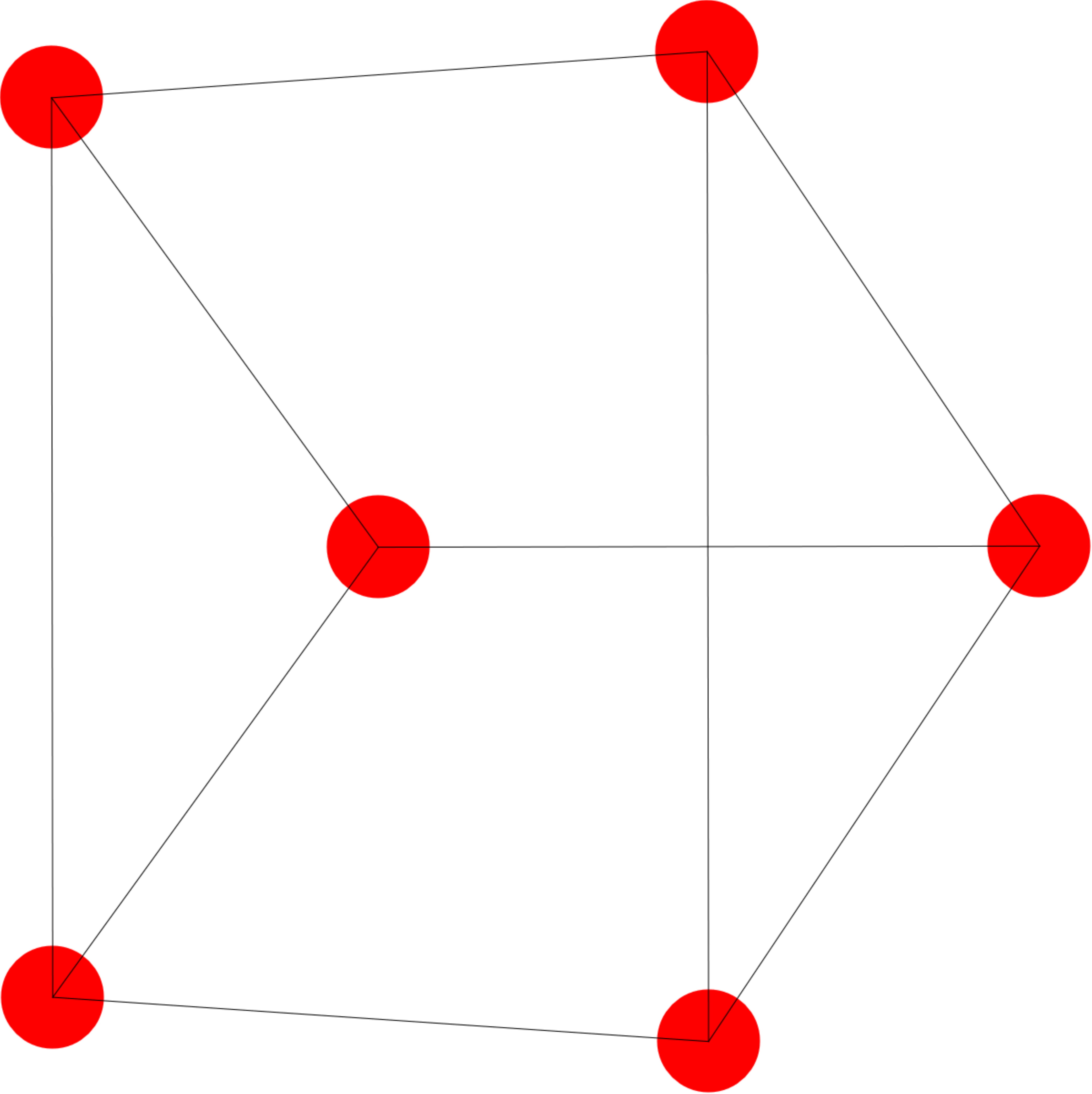}}
\scalebox{0.1}{\includegraphics{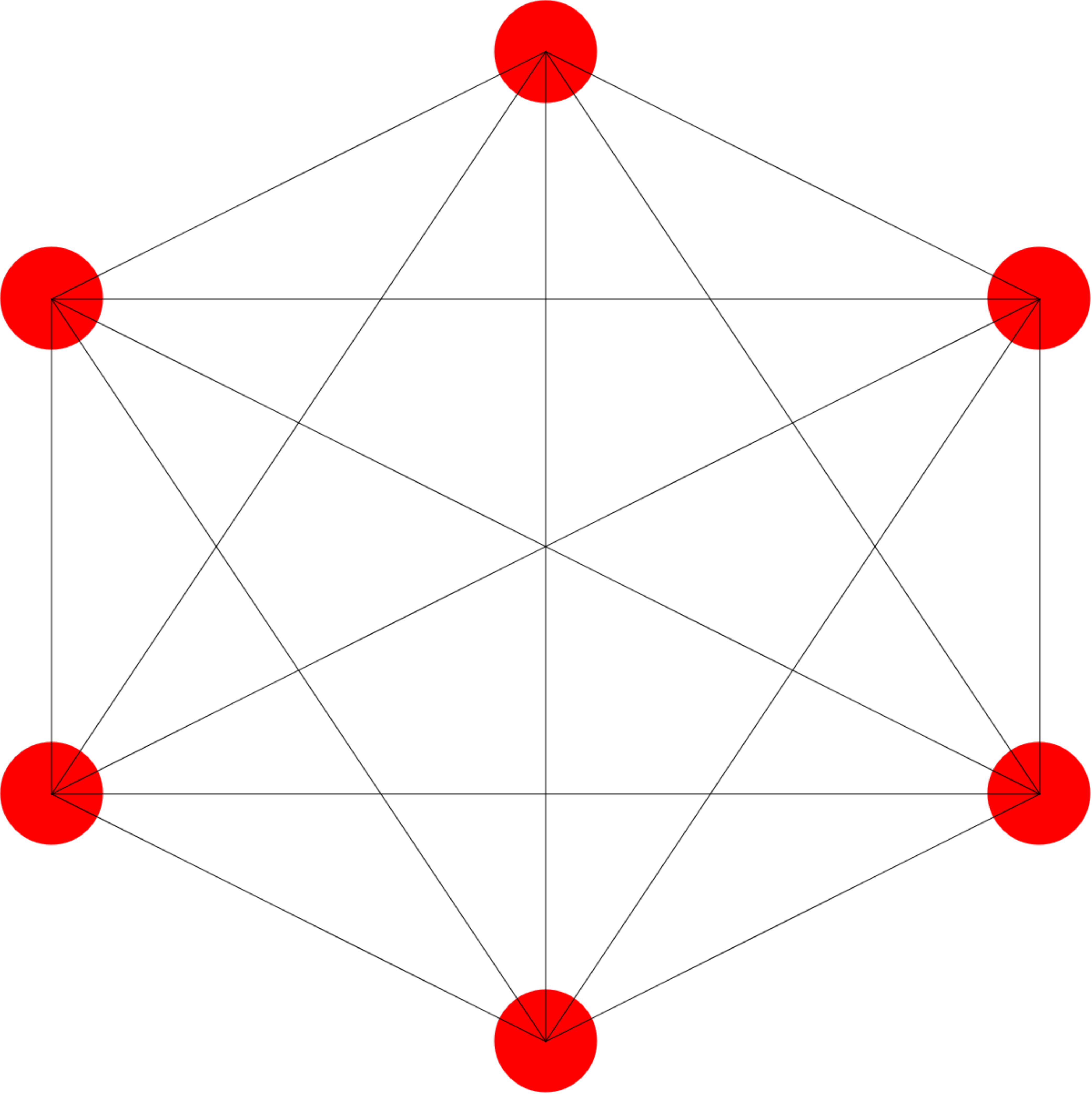}}
\caption{
The five graph multiplications $K_2$ with $K_3$.
}
\end{figure}

\paragraph{}
There are two classes of subgraphs which can play the role the integers. 
One is the class $P_n$ of graphs without edges if the addition is the 
disjoint union $\oplus$, the second
is the class $K_n$ of complete graphs if the addition is the join $+$. 
The ring homomorphisms are the obvious maps $n \to P_n$ or $n \to K_n$. 
Lets call the first class of graphs $\ZZ_P$ and the second class of graphs $\ZZ_K$. 

\begin{propo}
The weak, tensor and the strong rings contain the subring $\ZZ_P$. 
Their dual rings and in particular the Zykov ring contains the subring $\ZZ_K$. 
\end{propo}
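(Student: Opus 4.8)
The plan is to verify directly that the edgeless graphs are closed under each of the three products and then pass to the Grothendieck completion, exactly as in the proof of the clique-number homomorphism. First I would check the key closure computation. Writing $P_n$ for the edgeless graph on $n$ vertices, the vertex set of each of the three products $P_n \square P_m$, $P_n \otimes P_m$, $P_n \osquare P_m$ has cardinality $nm$. Inspecting the three edge-set formulas from Section 2, every edge of a weak, tensor or strong product must draw an edge either from $E(G)$ or from $E(H)$; since both edge sets are empty for $P_n$ and $P_m$, each product is again edgeless. Hence in all three rings one has $P_n \cdot P_m = P_{nm}$.

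Next I would assemble the ring homomorphism. Together with the obvious identity $P_n \oplus P_m = P_{n+m}$ for the disjoint union, the computation above shows that $n \mapsto P_n$ is simultaneously a homomorphism of the additive and of the multiplicative monoids $\NN \to \G$, i.e. a semiring homomorphism. By the same Grothendieck group completion used for the clique number---setting $P_{-n} = -P_n$ and extending additively---this lifts to a ring homomorphism $\ZZ \to \G$ with respect to each of the three products, and its image is the subring $\ZZ_P$. The map is injective because $P_n$ and $P_{n'}$ are non-isomorphic for $n \neq n'$, so $\ZZ_P \cong \ZZ$ as a ring inside each of the weak, tensor and strong rings.

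For the dual statement I would invoke the graph complement isomorphism already established in the extended summary. Taking complements interchanges the disjoint union $\oplus$ with the Zykov join $+$ and sends the edgeless graph $P_n$ to the complete graph $K_n$; since complementation is a ring isomorphism from each of the three rings onto its dual, it carries the subring $\ZZ_P$ onto the subring $\ZZ_K$ generated by the $K_n$. In particular, because the Zykov ring is the dual of the strong ring, it contains $\ZZ_K$. As an independent check one can use the explicit relations $K_n + K_m = K_{n+m}$ and $K_n \cdot K_m = K_{nm}$ recorded earlier, which exhibit $n \mapsto K_n$ directly as a ring homomorphism $\ZZ \to (\G,+,\cdot)$ with image $\ZZ_K$.

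The computations here are entirely routine; the only two points demanding care are the passage to negative elements and the multiplicativity of complementation. The first is handled uniformly by the Grothendieck completion. The second---that complementation respects the product and not merely the additive structure in all three rings---is exactly the duality between the strong and Zykov rings asserted in the extended summary, so I would lean on that rather than reprove it, making the dual half of the statement immediate once the $\ZZ_P$ half is in place.
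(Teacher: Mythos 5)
Your proposal is correct and matches the paper's own (implicit) justification: the paper states this proposition without a proof, relying exactly on the observations you reproduce --- the closure relations $P_n \oplus P_m = P_{n+m}$ and $P_n \square P_m = P_n \otimes P_m = P_n \osquare P_m = P_{nm}$, the Grothendieck completion, and the complementation duality sending $P_n$ to $K_n$ together with $K_n + K_m = K_{n+m}$ and $K_n \cdot K_m = K_{nm}$. The one point worth flagging is inherited from the paper rather than introduced by you: $K_1$ is not genuinely a unit for the tensor product of simple graphs (indeed $G \otimes K_1$ is edgeless), so for the tensor ring the word ``subring'' must be read as not requiring the ambient multiplicative identity.
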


\paragraph{}
The upshot is that the Zkyov ring has a compatible extension of the dimension
functional and that its dual has a compatible extension of Euler characteristic. 
The Zykov ring also has mixed additive-multiplicative compatibility
with Euler characteristic. The reduced Euler characteristic of the sum is the product
of the reduced Euler characteristics of the summands. 
The dual is compatible with cohomology but also has (when suitably deformed) a mixed
multiplicative-additive compatibility of dimension which is familiar from the continuum: the
dimension of the product is then the sum of the dimensions of the factors. 

\section{Zykov addition}

\paragraph{}
Given two finite simple graphs $G=(V,E), H=(W,F)$, the {\bf Zykov addition} is defined as
$$  G+H=(V \cup W, E \cup F \cup (a,b) \; a \in V, b \in W \}  \; . $$
As it is commutative and associative and the empty graph $0=(\emptyset,\emptyset)$ is
the zero element, it is a monoid. The set of equivalence classes of the form 
$A-B$ is defined as follows: define $A-B \sim C-D$ if there exists $K$ such that $A+D+K=B+C+K$.
These equivalence classes now form a commutative group. The addition $+$ corresponds to the {\bf join
operation} in topology. It has been introduced in 1949 by Zykov. The general 
construction of a group from a monoid has its roots in arithmetic but has
been formalized abstractly first by Grothendieck.

\begin{figure}[!htpb]
\scalebox{0.1}{\includegraphics{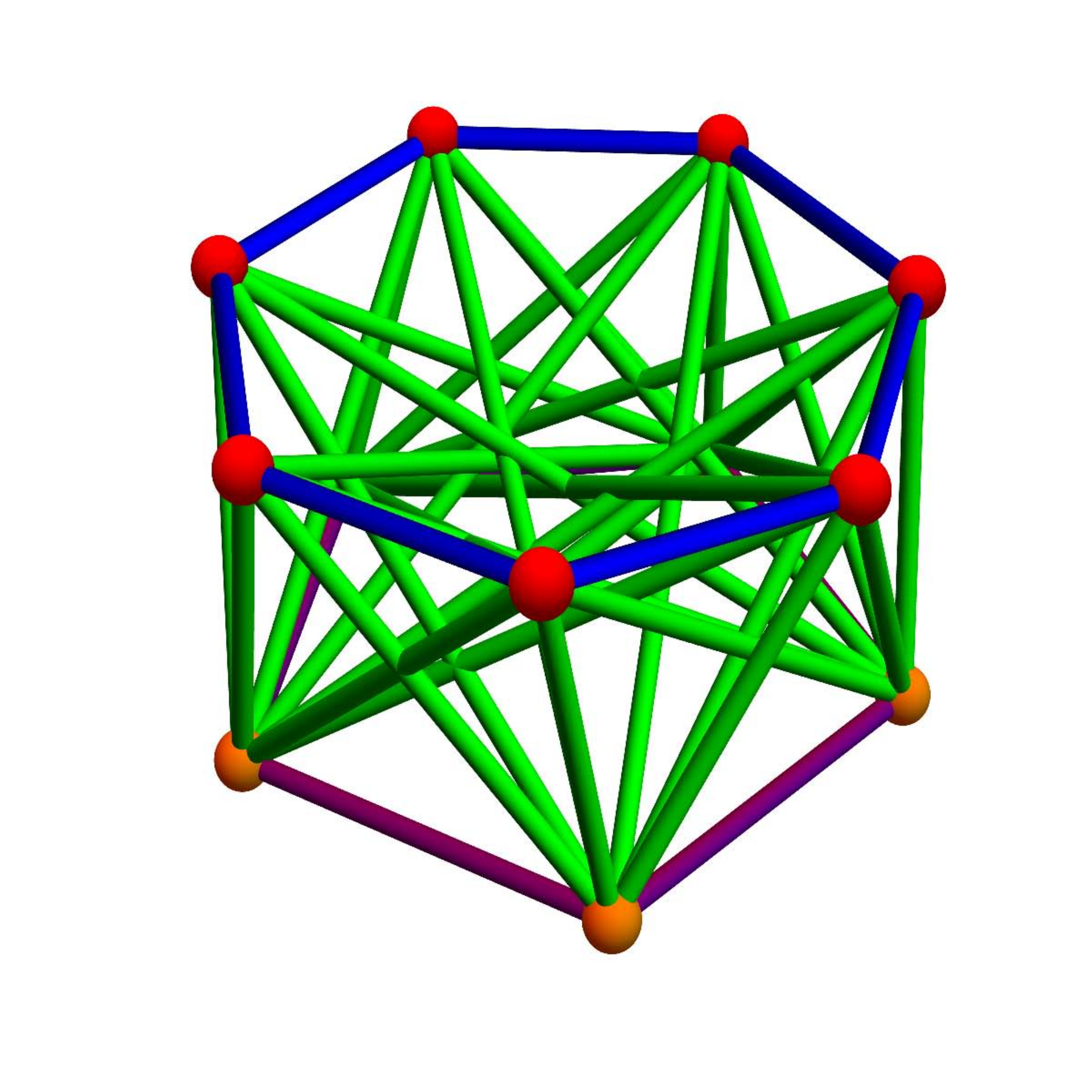}}
\scalebox{0.1}{\includegraphics{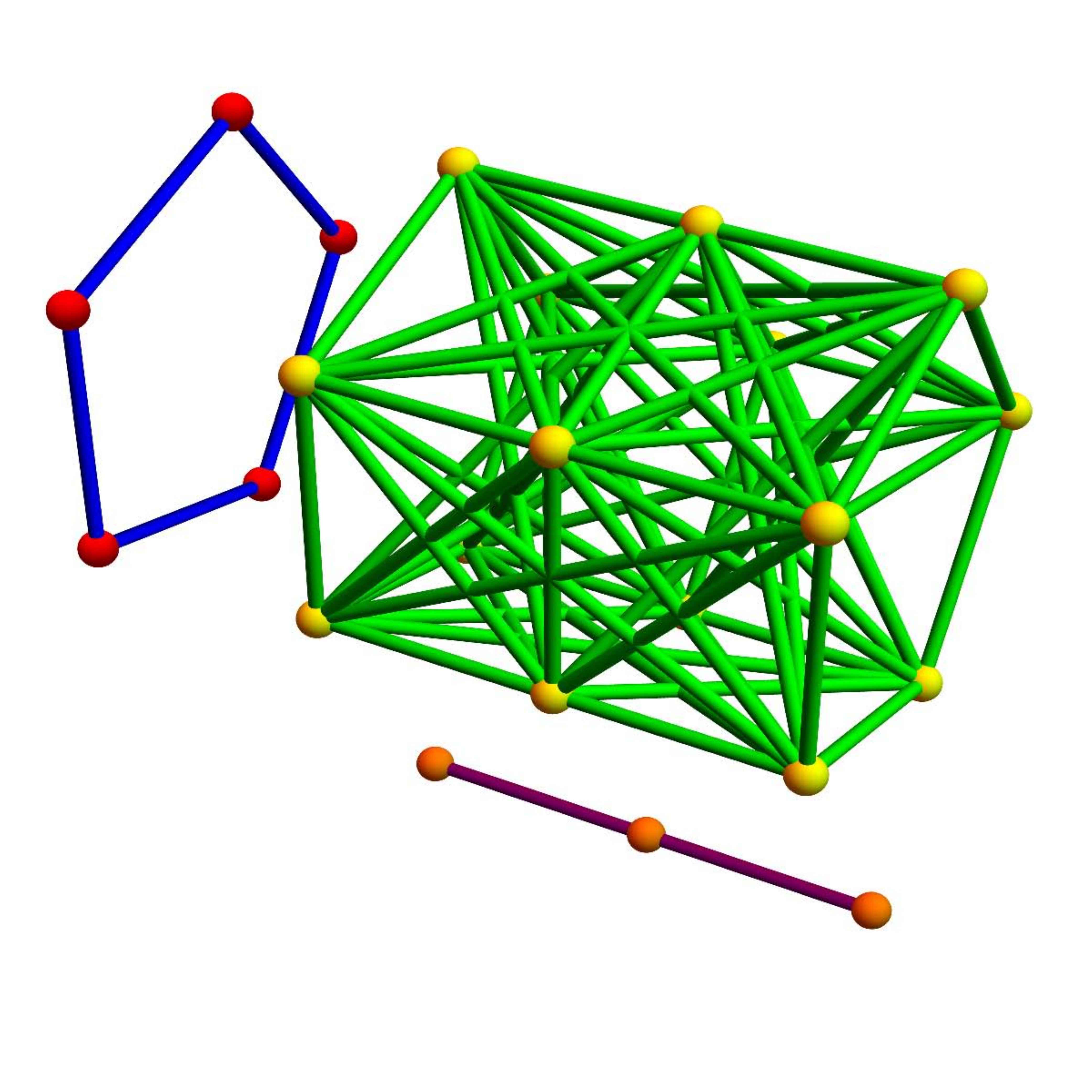}}
\caption{
Addition and multiplication in the Zykov ring. 
}
\end{figure}

\paragraph{}
For the additions $+$ and $\oplus$ it is no problem to extend the operation
to the larger class of simplicial complexes. 
A {\bf finite abstract simplicial complex} is a finite set of non-empty sets
closed under the operation of taking non-empty subsets. The disjoint union
of complexes produces a monoid. Also the Zykov addition 
can be extended to finite abstract simplicial complexes by defining
$$  G + H = G \cup H \cup \{ x \cup y \; | \; x \in G, y \in H \}  \; . $$
The empty set is the {\bf zero element}. The operation is obviously commutative and 
associative. One again gets a group containing elements of the form $A-B$ and identifies
$A-B \sim C-D$ if there exists $K$ such that $A+D+K=B+C+K$. 
If $G,H$ are finite simple graphs then the Whitney complex of the graph $G+H$
is the sum of the Whitney complexes of $G$ and $H$. The Zykov group of graphs 
therefore can be considered to be a sub-group of the group of simplicial complexes. 

\paragraph{}
In order to do computations in the additive Zykov group, lets look at 
the graphs $P_n$,the $n$-vertex graph with no edges, 
$K_n$ the {\bf complete graph} with $n$ vertices, $C_n$ the 
{\bf circular graph} with $n$ vertices and $W_n$ the {\bf wheel graph} with $n$
spikes so that for the central vertex $x$, the unit sphere $S(x)$ is $C_n$. 
Let $K_{n,m}$ denote the {\bf complete bipartite graph} of type $n,m$ and let
$S_n$ the {\bf star graph} with $n$ rays. Since $K_n$ plays the role of the
integer $n$ in $\mathbb{Z}$, we write also $n$ for $K_n$. \\

{\bf Examples.} \\
1) $P_n+P_m = K_{n,m}$ complete bipartite graph \\
2) $K_n+K_m = K_{n+m}$ integer addition \\
3) $K_1+G$ is the cone over $G$ \\
4) $P_2+G$ is the suspension over $G$ \\
5) $P_2+P_2+P_2$ octahedron  \\
6) $P_2+P_2+P_2+P_2$ 16-cell, three sphere \\
7) $K_1+C_n=W_n$ wheel graph \\
8) $K_1+P_n=S_n$ star graph \\
9) $P_3+C_n$ triple suspension of circle

\begin{figure}[!htpb]
\scalebox{0.3}{\includegraphics{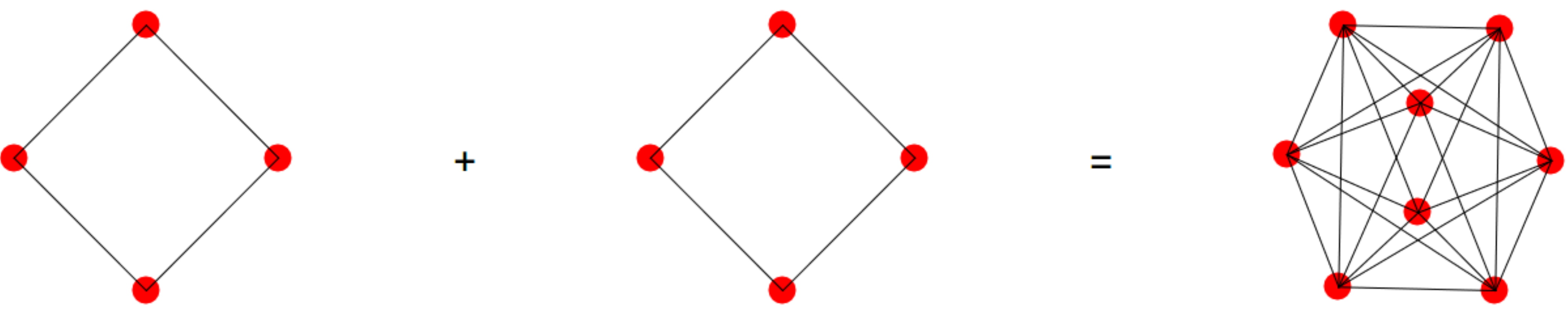}}
\scalebox{0.3}{\includegraphics{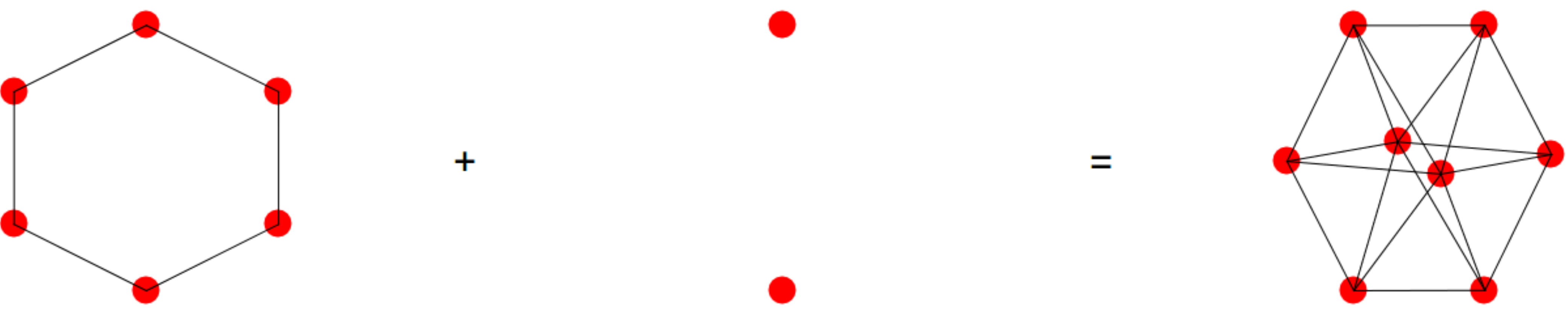}}
\scalebox{0.3}{\includegraphics{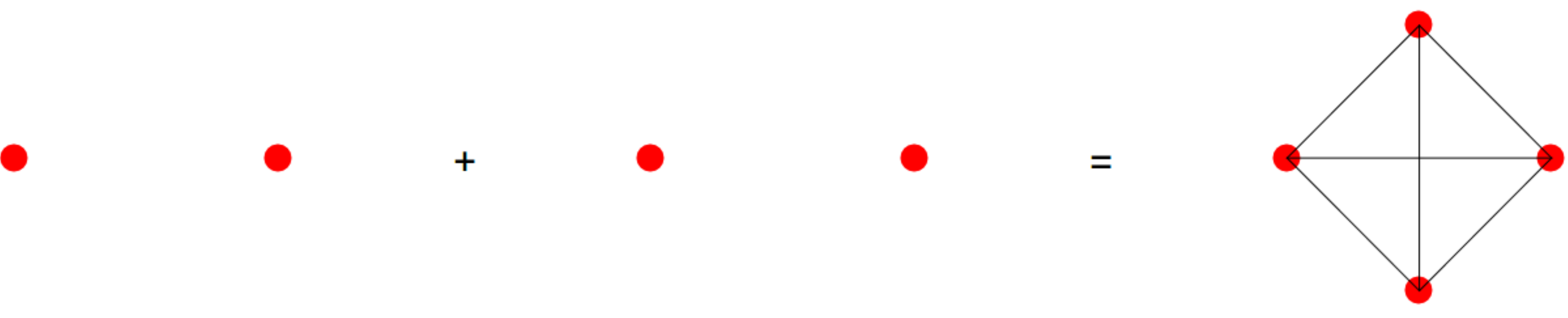}}
\scalebox{0.3}{\includegraphics{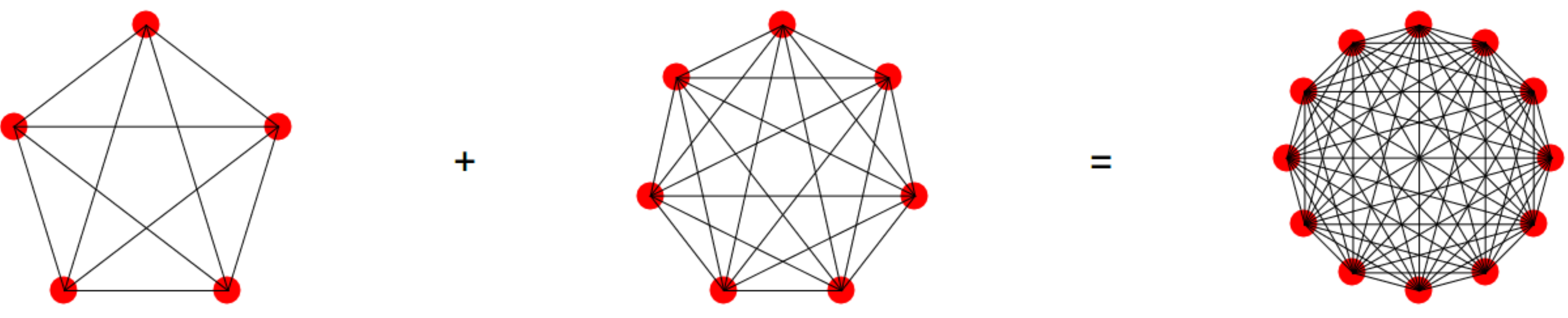}}
\scalebox{0.3}{\includegraphics{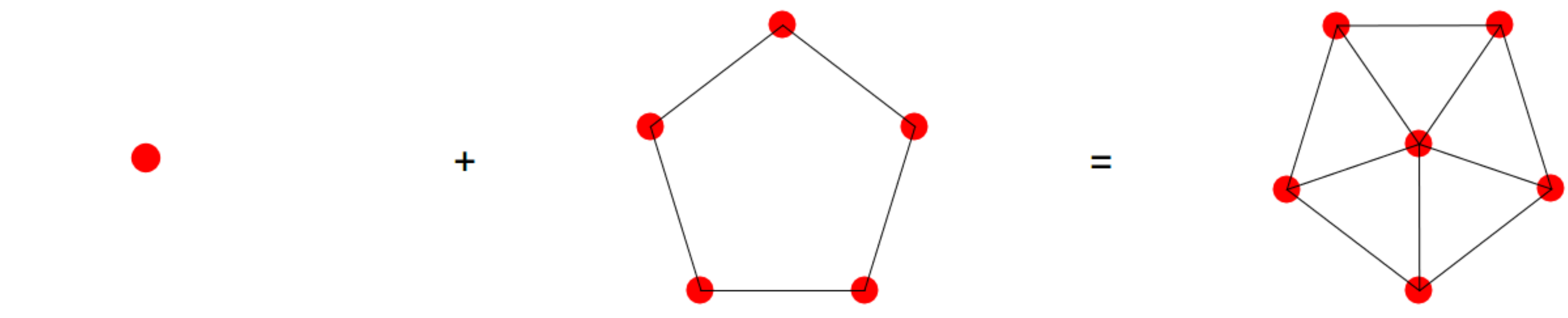}}
\caption{
Some examples of Zykov additions.
}
\end{figure}

\paragraph{}
Similarly as with numbers one can define classes of
graphs algebraically. For example, we can look at all graphs which can
be written as sums of $P_k$ graphs. Lets call them $P$-graphs.
If $G=P_{k_1} + \cdots + P_{k_d}$, then this graph has dimension 
$d-1$. Examples are the wheel graph $P_1+P_2+P_2$ of dimension $2$,
the complete bipartite graph $P_n + P_m$ of dimension $1$, 
the complete graph $P_1+P_1+ \cdots +P_1=d P_1$ of dimension $d-1$
or the {\bf cross polytopes} $P_2+P_2+ \cdots +P_2=d P_2$ which is a sphere 
of dimension $d-1$. 

\paragraph{}
We already know some thing about the addition: the functional $i(G) = 1-\chi(G)$ is
multiplicative. It can be interpreted as a genus or as a reduced Euler characteristic. 
If $f(G) = (v_0(G), v_1(G), \dots v_{d(G)}(G))$ is the $f$-vector of 
$G$ with dimension $d(G)$, then the clique number $c(G) = d(G)+1$ is additive. 
The {\bf volume} $V(G) = v_{d(G)}(G)$ is multiplicative. The Fermi functional 
$\prod_x (-1)^{{\rm dim}(x)}$ equal to the deteriminant of the connection Laplacian of $G$
which is the Fredholm determinant of the adjacency matrix of the connection graph $G'$ of $G$. 
We also have $f_{G + H} = f_G f_H$, where  $f_G(x)= 1+\sum_{k=0} v_k x^{k+1}$ is the 
$f$-generating function of $G$. This implies $\chi(G+H) = \chi(G) + \chi(H) - \chi(G) \chi(H)$
which is equivalent to the multiplicative property of $i(G)$. The Kirchhoff Laplacian
of $G+H$ has an eigenvalue $v_0(G) + v_0(H)$. We also know that the
second eigenvalue $\lambda_2(G)$ of the Kirchhoff Laplacian satisfies
$\lambda_2(G+H) = {\rm min}(v_0(G),v_0(H)) + {\rm min}(\lambda_2(G),\lambda_2(H)$. 
Finally, if $D=d+d^*$ be the Dirac operator of $G$ then the Hodge Laplacian 
$H=D^2 = (d+d^*)^2$ splits into blocks $H_k$ for which the nullity is the $k$'th Betti number
${\rm dim}({\rm ker})(H_k) = b_k(G)$ (\cite{DiracKnill,knillmckeansinger}). 
The spectrum $\sigma_{d(G)}(G)$ of $L_{d(G)}$ is the volume spectrum of $G$. 
We know that the volume eigenvalues of $G+H$ are of the form
$\lambda+\mu$, where $\lambda \in \sigma_{d(G)}(G)$ and $\mu \in \sigma_{d(H)}(H)$. 

\paragraph{}
Given a finite simple graph $G=(V,E)$, the {\bf graph complement}
$\overline{G}=(V,\overline{E})$ is defined by $\overline{E}$, the complement of $E$
in the edge set of the complete graph on $V$. This produces an involution on the set
of all finite simple graphs. The disjoint join operation $\oplus$ and the join $+$
are conjugated by this duality: 

\begin{lemma}
$G + H = \overline{\overline{G} \oplus \overline{H}}$. 
\end{lemma}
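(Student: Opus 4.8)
The plan is to prove the identity by a direct comparison of edge sets, since both sides manifestly carry the same vertex set $V \cup W$ once we assume, as is standard for these operations, that $V$ and $W$ are disjoint. Writing $G=(V,E)$ and $H=(W,F)$, I would first fix the complete graph on $V \cup W$ and partition its edge set into three disjoint classes: the pairs lying entirely inside $V$, the pairs lying entirely inside $W$, and the \emph{crossing} pairs $\{a,b\}$ with $a \in V$ and $b \in W$. The whole argument then reduces to checking, class by class, that an edge belongs to $G+H$ exactly when it belongs to $\overline{\overline{G} \oplus \overline{H}}$.

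For the within-$V$ and within-$W$ classes the verification is routine. In $\overline{G} \oplus \overline{H}$ the edges inside $V$ are exactly the non-edges $\overline{E}$ of $G$ and the edges inside $W$ are exactly the non-edges $\overline{F}$ of $H$, since the disjoint union $\oplus$ introduces no additional edges. Complementing inside the complete graph on $V \cup W$ restores $E$ among the vertices of $V$ and $F$ among the vertices of $W$, which is precisely what the join $G+H$ records inside each factor.

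The decisive class is that of the crossing edges, and this is where I would locate the only genuine content. By definition of the disjoint union, the graph $\overline{G} \oplus \overline{H}$ contains \emph{no} crossing edge whatsoever; hence, after taking the complement relative to the complete graph on $V \cup W$, \emph{every} crossing pair becomes an edge. This matches exactly the defining feature of the Zykov join, which by construction adjoins all edges $\{a,b\}$ with $a \in V$ and $b \in W$. Thus all three classes agree and the two edge sets coincide, giving the lemma.

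The subtle point, which I would flag as the place to be careful rather than as a real obstacle, is the bookkeeping of the complement: it must be taken inside the complete graph on the \emph{combined} vertex set $V \cup W$, not factorwise. It is precisely this global complement that converts the absence of crossing edges in the disjoint union into the presence of all crossing edges in the join, and that makes the duality $\oplus \leftrightarrow +$ work. Once this is kept straight, nothing beyond the three-way case check is needed, so the lemma follows at once.
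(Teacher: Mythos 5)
Your proof is correct and follows essentially the same route as the paper's: a case analysis of edges within $V$, within $W$, and crossing between the two factors, using that the disjoint union contributes no crossing edges while the global complement turns their absence into their presence. If anything, your version is slightly more complete, since the paper only verifies the inclusion $E(G+H) \subseteq E(\overline{\overline{G} \oplus \overline{H}})$, whereas your partition of the edge set of the complete graph on $V \cup W$ establishes both inclusions at once.
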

\begin{proof}
Let $e$ be first an edge in $G$. Then it is not in $\overline{E}$ and
also not in $\overline{G} \oplus \overline{H}$. So, it is in $\overline{\overline{G} \oplus \overline{H}}$. 
The same holds if $e$ is an edge in $H$. Let now $e$ be an edge in $G + H$ which connects vertices
from different graphs. Now since $e$ is not in $\overline{G} \oplus \overline{H}$, it is in 
$\overline{\overline{G} \oplus \overline{H}}$. 
\end{proof} 

We can derive again: 

\begin{coro}
The additive Zykov monoid has the unique factorization property. 
\end{coro}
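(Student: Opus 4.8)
The plan is to transport the unique factorization already available for the disjoint union $\oplus$ across the duality supplied by the graph complement, using Lemma 2. First I would record that the complement map $G \mapsto \overline{G}$ is an involution on $\G$, hence in particular a bijection, and that it fixes the neutral element: the complement of the empty graph $(\emptyset,\emptyset)$ is again the empty graph, which is the zero object for both additions. Applying the complement to both sides of the identity $G + H = \overline{\overline{G} \oplus \overline{H}}$ of Lemma 2, and using that complementation is its own inverse, yields the equivalent formula $\overline{G + H} = \overline{G} \oplus \overline{H}$. This exhibits complementation as a monoid homomorphism from the additive Zykov monoid $(\G,+)$ to the disjoint-union monoid $(\G,\oplus)$; being bijective, it is a monoid isomorphism.

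Next I would establish unique factorization in the target monoid $(\G,\oplus)$, which is where the content sits and where it is most transparent. The $\oplus$-irreducible elements are exactly the nonempty connected graphs: a graph factors nontrivially under $\oplus$ precisely when it is disconnected. Every finite graph is the disjoint union of its connected components, and this decomposition is unique up to reordering and isomorphism of the components, since the partition of the vertex set into connected components is an intrinsic invariant. In other words, $(\G,\oplus)$ is the free commutative monoid on the set of isomorphism classes of nonempty connected graphs, which is the strongest possible form of the unique factorization property.

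Finally I would pull this structure back through the isomorphism. Since complementation carries $+$ bijectively to $\oplus$, it sends $+$-irreducibles to $\oplus$-irreducibles and factorizations to factorizations, and unique factorization is plainly preserved under a monoid isomorphism. Hence every graph admits a factorization into $+$-irreducibles that is unique up to order, the additive Zykov primes being exactly those $G$ whose complement $\overline{G}$ is connected. The only point that deserves attention is the bookkeeping under complementation near the boundary cases, namely that the neutral element is genuinely shared and that edgeless graphs $P_n$ (whose complements $K_n$ are connected) and complete graphs are correctly classified as composite or prime; but each of these checks is immediate once the isomorphism is in place, so I do not expect any real obstacle.
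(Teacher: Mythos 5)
Your proposal is correct and follows the same route as the paper: unique factorization for the disjoint-union monoid $(\G,\oplus)$ (connected components), transported to $(\G,+)$ via the complementation duality of Lemma 2. The paper states this in two sentences; you have merely made explicit the bookkeeping (complementation is an involutive monoid isomorphism fixing the zero element) that the paper leaves implicit.
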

\begin{proof} 
The monoid obtained by taking the disjoint union has the
unique factorization property. The duality functor carries
this to the join addition. 
\end{proof} 

{\bf Remark.} \\
The unique factorization property extends to the group. This is similar as in the
integers where $-3$ can be considered a prime in $\mathbb{Z}$ as it is only divisible by itself or
a unit. What are the units in the Zykov ring? These are the graphs which have a multiplicative
inverse. $G \cdot H = K_1$ however is only possible if the graph has one vertex. This means that
$G$ is either $K_1$ or $-K_1$. We have to check about the units as in the Gaussian ring for example,
more units have appeared $\{1,-1,i,-i\}$ which would have been possible also here. By the way, 
one can of course look at {\bf ring extensions} like $\G[i]$ for networks and there, the set
of units would be larger. \\

For $\oplus$, the primes are all the connected graphs. This means that the graph complement
of a connected graph in a complete graph is a prime for $+$. \\

{\bf Examples:} \\
 {\bf 1)} $C_4$ is not prime. But $\overline{C_4} = K_2 \oplus K_2$ is neither.  \\
 {\bf 2)} $C_5$ is prime. And $\overline{C_5}=C_5$ is also.  \\
 {\bf 3)} $C_6$ is prime. And $\overline{C_6}=K_3 \times K_2$ is too.  \\
 {\bf 4)} $C_7$ is prime. And $\overline{C_7}$ is a discrete Moebius strip.  \\
 {\bf 5)} $C_8$ is prime. And $\overline{C_8}$ is already a  three dimensional graph. \\
 {\bf 6)} $W_4 = P_2 + P_2 + P_1$. Indeed $\overline{W_4} =K_2 \oplus K_2 \oplus K_1$. \\
 {\bf 7)} $W_5 = C_5 + P_1$. And $\overline{W_5} = C_5 + K_1$. 

\section{Zykov Multiplication}

\paragraph{}
Having an addition on graphs, it is natural to look for multiplications
which satisfies the distributivity law. We were led to such a multiplication in the winter of 
2016 in \cite{Spheregeometry} after doing a systematic search.
The compatible multiplication has later turned out to be the dual of the strong 
graph multiplication. We call it the {\bf Zykov product}. 

\paragraph{}
Given two finite simple graphs $G=(V,E), H=(W,F)$, define the {\bf Zykov product}
$$ G \cdot H=(V \times W, E \cup F \cup 
           \{ ((a,b),(c,d))  \; | \; (a,c) \in E \; {\rm or} \; (b,d) \in F \; \}  \; . $$ 
It is commutative and associative and has as the $1$ element the graph $K_1$. Furthermore, we have
$0 \cdot G = 0$. We also have $G \cdot H=0$ if and only if one of them is the empty graph.
Also in this multiplicative monoid, we can extend the operation to a group. 

\paragraph{}
Here is an extension of the Zykov product $\cdot$ to the larger class of simplicial complexes. 
Assume $G$ is a set of subsets of $X$ and $H$ is a set of subsets of $Y$. Define the projections $\pi_k$
and take 
$$ G \cdot  H = \{ A  \subset X \times Y \; | \; \pi_1(A) \in G \; {\rm or} \;  \pi_2(A) \in H \} \; . $$
This means $G \cdot H = G \times P(Y) \cup P(X) \times H$ where $P(X)$ is the set of all subsets of $X$. 
Again the distributivity law $G \cdot (H+K) = G \cdot H + G \cdot K$ holds. 
One can also get this by defining a {\bf complementary simplicial complex} $\overline{G}$ of $G$. It can 
be defined as the set of subsets of $V=\bigcup_A A \in G$ which have the property that it does not contain
any $A \in G$ {\bf of positive dimension}. Now one has again $G + H = \overline{\overline{G} \oplus \overline{H}}$.

\paragraph{}
For any two graphs $G,H$, the {\bf strong product} $G \osquare H$ 
contains the {\bf tensor product} $G \otimes H$ and the {\bf weak Cartesian 
product} $G \square H$ as subgraphs. For the weak product $\otimes$ we 
know that if $A(G)$ is the adjacency matrix of $G$, 
then $A(G) \otimes A(H) = A(G \otimes H)$, where the former product 
is the tensor product of adjacency matrices.

\paragraph{}
Lets come back to the Zykov product: 

\begin{lemma}[Distributivity]
Multiplication is compatible with addition: $G \cdot (H+K) = G \cdot H + G \cdot K$. 
\end{lemma}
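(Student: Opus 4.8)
The plan is to verify the identity directly as an equality of labeled graphs, comparing the vertex sets and then the edge sets of the two sides under the obvious identification of vertices. Write $G=(V,E)$, $H=(W,F)$ and $K=(U,L)$, with $W$ and $U$ disjoint. On the vertex level there is nothing to do: both $G\cdot(H+K)$ and $(G\cdot H)+(G\cdot K)$ have vertex set $V\times(W\cup U)=(V\times W)\cup(V\times U)$, which I think of as split into two \emph{layers}, the $H$-layer $V\times W$ and the $K$-layer $V\times U$. The whole content of the lemma is therefore that, under this identification, a pair of vertices is adjacent on the left exactly when it is adjacent on the right, and I would settle this by a case analysis according to which layers the two endpoints occupy.

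First I would dispose of the same-layer cases. Take two vertices $(a,x),(c,y)$ with $x,y\in W$. On the left, the defining ``or'' of the Zykov product makes them adjacent iff $(a,c)\in E$ or $(x,y)\in E(H+K)=F\cup L\cup WU$; since $x,y\in W$ the only relevant alternative is $(x,y)\in F$, so the condition is $(a,c)\in E$ or $(x,y)\in F$. On the right, the pair lies inside the summand $G\cdot H$ (it is neither a cross edge of the join nor an edge of $G\cdot K$), and its adjacency is governed by exactly the same condition $(a,c)\in E$ or $(x,y)\in F$. The $K$-layer case, with both endpoints in $V\times U$, is identical with $F$ replaced by $L$. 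Thus the two graphs agree on all within-layer pairs.

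The crux is the cross-layer case, where one endpoint is $(a,x)$ with $x\in W$ and the other is $(c,y)$ with $y\in U$. Here I claim both sides make the pair adjacent unconditionally. On the left, the Zykov sum $H+K$ contains the complete bipartite join $WU$, so $(x,y)\in E(H+K)$ automatically, and the ``or'' in the Zykov product then forces $(a,x)$ and $(c,y)$ to be adjacent regardless of $a,c$. On the right, this pair joins the $H$-layer $V\times W$ to the $K$-layer $V\times U$, and the Zykov addition $(G\cdot H)+(G\cdot K)$ by definition adjoins \emph{all} edges between the vertex sets of its two summands; hence the pair is again adjacent unconditionally. So the two descriptions coincide on cross-layer pairs as well, and combining the three cases yields the asserted equality.

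The step I expect to carry the weight is precisely this cross-layer matching, and it is worth isolating why it works: the complete bipartite join that the Zykov \emph{addition} inserts between the two summands on the right is exactly reproduced on the left by the interaction of the join $WU\subseteq E(H+K)$ with the disjunctive ``or'' in the Zykov \emph{product}. It is this compatibility of ``join'' with ``or'' that makes the product distribute; the analogous computation with the conjunctive tensor rule would fail to fill in these cross-layer edges, which is the structural reason the Zykov product, rather than the direct product, is the multiplication compatible with the join. No delicate estimates are involved, so once the three cases are written out the identity of labeled graphs follows; the only bookkeeping to keep honest is that $W$ and $U$ are disjoint, so that the two layers do not overlap.
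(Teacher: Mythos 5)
Your proof is correct and follows essentially the same route as the paper's: both sides are compared as labeled graphs on the common vertex set $V\times(W\cup U)$, with a case analysis on the three types of edges in the join $H+K$ (within $H$, within $K$, and the cross edges $WU$), the cross-layer edges on the left being matched by the complete bipartite edges that the Zykov addition inserts on the right. Your write-up is simply a more careful and explicit version of the paper's terse argument, so there is nothing to correct.
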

\begin{proof}
Both $G \cdot H + G \cdot K$ as well as $G \cdot (H+K)$ have as the vertex set 
the product sets of the vertices.
As edges in the sum $H+K$ consist of three types, connections within H,
connections within K and any possible connection between H and K, two points
$(a,b), (c,d)$ are connected if either $(a,c)$ is an edge in $H$, or
$(b,d)$ is an edge in K or then if either $a,c$ or $b,d$ are in different graphs.
\end{proof}

This lemma follows also by complementary duality. If we know that the graph tensor product
and graph Cartesian product both are compatible with the disjoint union operation $\oplus$, then 
also their union is.

\begin{figure}[!htpb]
\scalebox{0.3}{\includegraphics{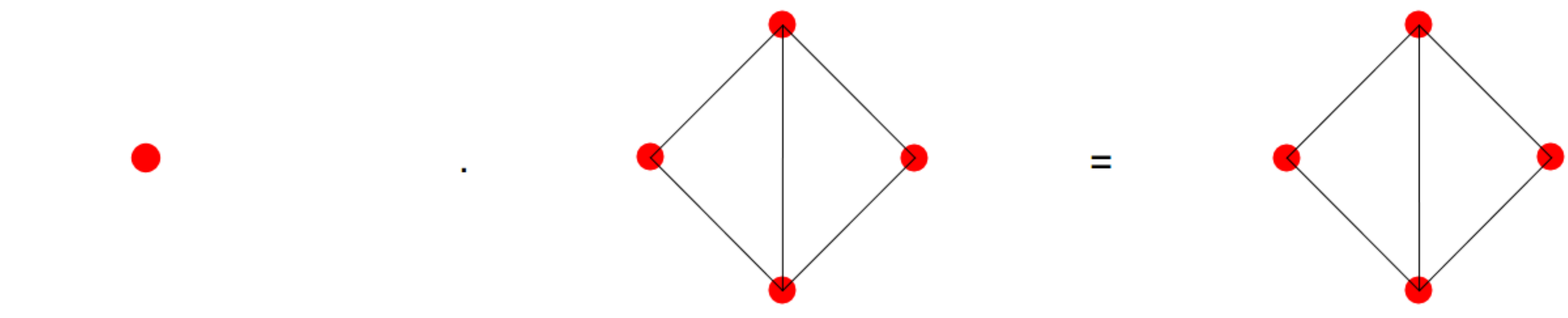}}
\scalebox{0.3}{\includegraphics{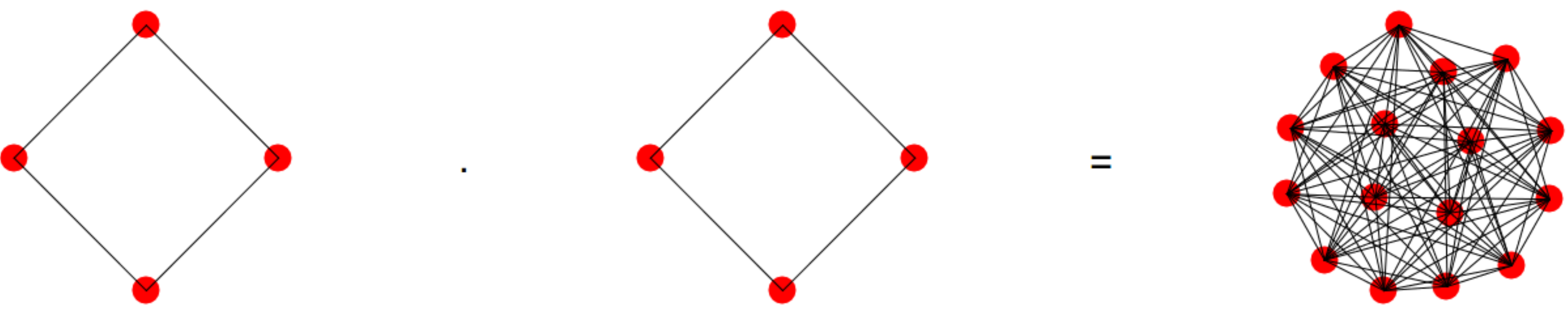}}
\scalebox{0.3}{\includegraphics{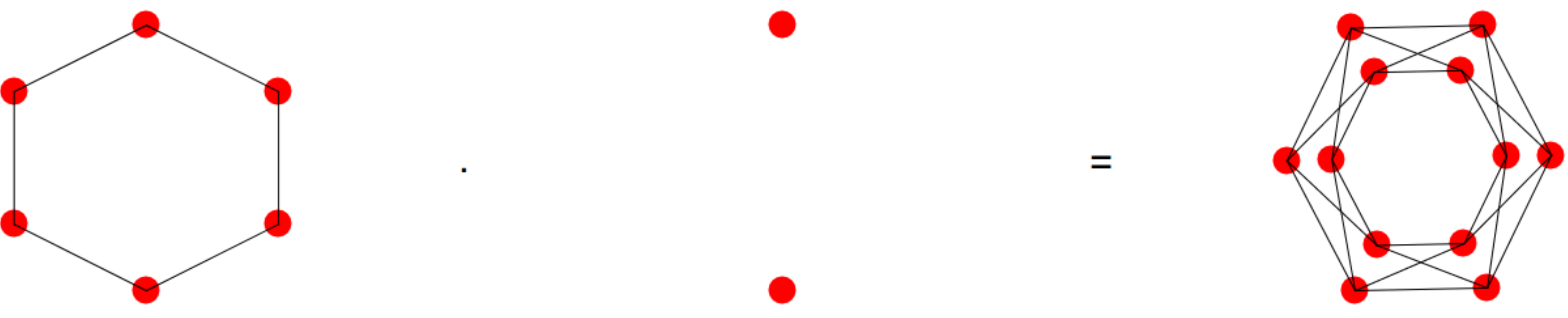}}
\scalebox{0.3}{\includegraphics{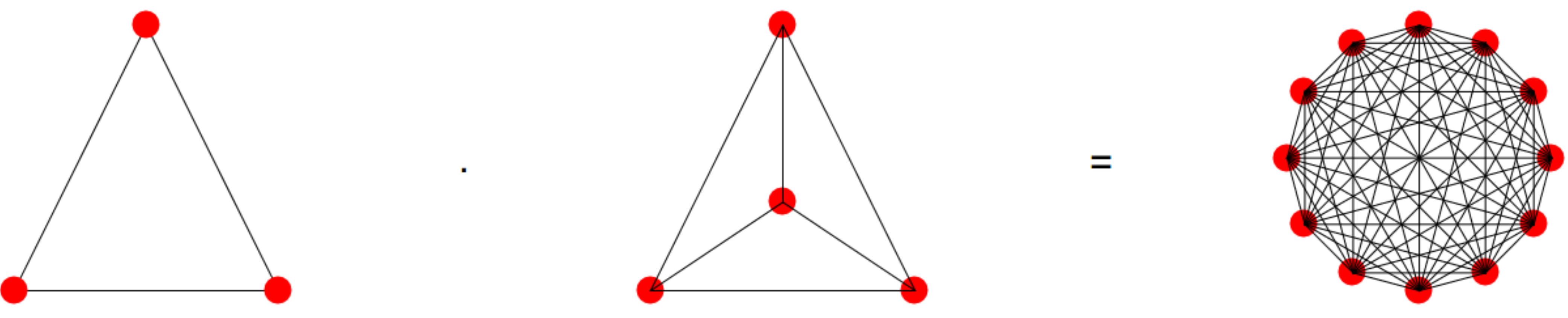}}
\scalebox{0.3}{\includegraphics{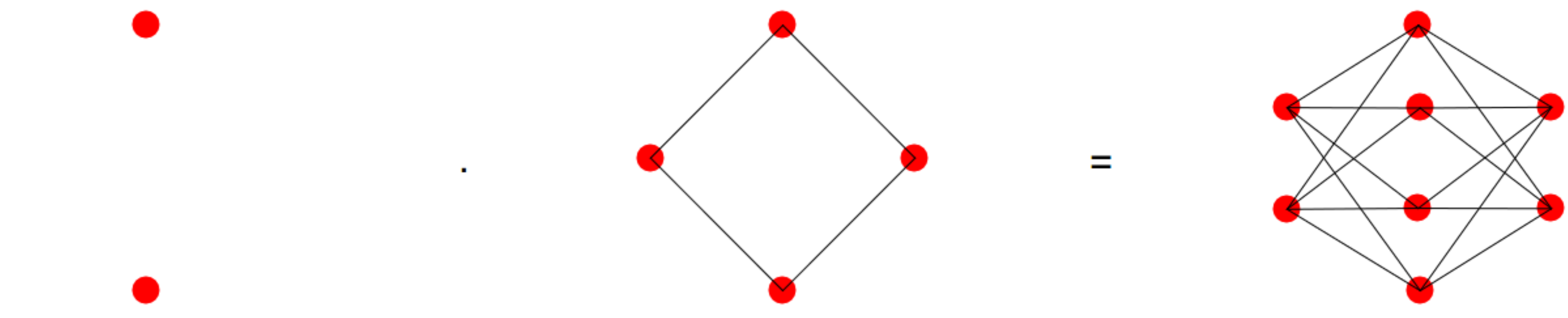}}
\caption{
Some examples of Zykov multiplications of graphs.
}
\end{figure}

\section{Computations in the Zykov ring}

\paragraph{}
Both the Zykov addition $+$ as well as the Zykov multiplication produce {\bf monoid structures} on 
the category of graphs or on the more general category of abstract simplicial complexes. 
For the addition, the empty graph or empty complex is the {\bf zero element}. 
For the multiplication the graph $K_1$ or one point simplicial complex is the {\bf one element}. 
A general construct of Grothendieck allows to produce a group from a monoid: in the additive case 
we look at all pairs $G-H$ of complexes and call $G-H = U-V$ if there exists $K$ such that $G+V+K = U+H+K$. 
In the multiplicative case, look at all pairs $G/H$ of complexes and call 
$G/H = U/V$ if there exists $K$ such that $G \cdot V \cdot K = U \cdot H \cdot K$. 

\paragraph{}
As we will see the multiplication $\cdot$ is not a unique factorization domain. 
Does the multiplicative monoid have the {\bf cancellation property}?
If the multiplicative monoid in a ring has the cancellation property, then the ring is called 
a {\bf domain}. If there is no possibility to write $x*y=0$ without one of the factors being zero,
the ring is called an {\bf integral domain}. Many rings are not integral domains:
like $Z_6$ in which $2*3=0$ or the product ring
$\mathbb{Z}^2$ where $(0,1)*(1,0)=(0,0)$. The ring of diagonal $3 \times 3$ matrices is an 
example of a ring which does not have the cancellation property as $G \cdot K=H \cdot K$ does not imply $G=H$ in general
as the case when $G,H,K$ be the projections on the x-axes, y-axes and z-axes shows. 

\paragraph{}
\begin{lemma}
While not unique factorization domains, the weak and strong rings have the cancellation property.
They are also integral domains. Consequently the Zykov ring is an integral domain but not a 
unique factorization domain. 
\end{lemma}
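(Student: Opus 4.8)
The plan is to identify both the weak ring $(\G,\oplus,\square)$ and the strong ring $(\G,\oplus,\osquare)$ with \emph{monoid rings} over $\mathbb{Z}$ and then apply the standard criterion for when a monoid ring is an integral domain. First I would record the two structural facts that make this possible. Since $\oplus$ is the disjoint union, a graph is an additive prime exactly when it is connected and every graph decomposes uniquely into connected components; hence the additive Grothendieck group is the free abelian group on the set $\mathcal{C}$ of (isomorphism classes of) connected graphs. The second fact, special to the weak and strong products, is that the product of two \emph{connected} graphs is again connected: $G\square H$ is connected whenever $G,H$ are, and since $E(G\osquare H)\supset E(G\square H)$ on the same vertex set, $G\osquare H$ is then connected as well. (This is exactly where the direct product $\otimes$ fails — by Weichsel's theorem the tensor product of two connected bipartite graphs is disconnected — which is why the lemma excludes the tensor ring.) Together with distributivity of $\square,\osquare$ over $\oplus$, these facts say that multiplication is the $\mathbb{Z}$-bilinear extension of a binary operation on the basis $\mathcal{C}$, so the weak and strong rings are precisely the monoid rings $\mathbb{Z}[M_\square]$ and $\mathbb{Z}[M_\osquare]$, where $M_\square,M_\osquare$ are the multiplicative monoids of connected graphs under $\square,\osquare$.

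Next I would invoke the criterion that, for an integral domain $R$ and a commutative monoid $M$, the monoid ring $R[M]$ is an integral domain provided $M$ is \emph{cancellative} and \emph{torsion-free}. Cancellativity and torsion-freeness of $M_\square$ and $M_\osquare$ are furnished by the Sabidussi unique prime factorization theorem for connected graphs, and its analogue for the strong product, already cited in the paper. Granting this, the integral-domain property follows from the leading-term argument: a cancellative torsion-free commutative monoid embeds, via a homomorphism $\phi$, into a totally ordered abelian group; given nonzero $x=\sum_i a_iG_i$ and $y=\sum_j b_jH_j$ in the ring, choose the support elements $G^\ast,H^\ast$ maximizing $\phi$, and observe that $G^\ast\osquare H^\ast$ is the unique $\phi$-maximal monomial of $xy$ and carries coefficient $a_{G^\ast}b_{H^\ast}\neq 0$, so $xy\neq 0$. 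A concrete $\phi$ begins with the vertex count $n(G)=|V(G)|$, which is multiplicative, $n(G\osquare H)=n(G)n(H)$, hence a monoid homomorphism into $(\mathbb{Z}_{\ge 1},\cdot)$; the finer part of the order, needed to separate graphs of equal vertex count, is what draws on the cancellation theory.

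With the integral-domain property in hand, the cancellation property of the multiplicative monoid is immediate and purely formal: in a commutative ring with no zero divisors, $a\neq 0$ and $ab=ac$ give $a(b-c)=0$, hence $b=c$. This settles the weak and strong rings. For the Zykov ring $(\G,+,\cdot)$ I would transport everything across the complement involution $G\mapsto\overline{G}$, which by the earlier identity $G+H=\overline{\overline{G}\oplus\overline{H}}$ and its multiplicative counterpart is a ring isomorphism from the strong ring onto the Zykov ring. Isomorphic rings share both the integral-domain and the unique-factorization status, so the Zykov ring is an integral domain, and the failure of unique factorization is inherited from the Imrich--Klavzar disconnected-case examples for $\osquare$ transported by the same isomorphism.

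The step I expect to be the main obstacle is the monoid-theoretic input: establishing that $M_\square$ and $M_\osquare$ are cancellative and torsion-free, i.e.\ that multiplication by a fixed connected graph is injective on connected graphs and that $G^{\osquare n}\cong H^{\osquare n}$ forces $G\cong H$. This is where the weight of the cited factorization theorems sits, and it must be kept conceptually separate from the non-uniqueness phenomenon: the latter lives in the multiplicative monoid of \emph{all} graphs, in particular the disconnected ones, which is what blocks unique factorization, whereas the domain argument only ever multiplies connected generators, on which the order $\phi$ is well behaved.
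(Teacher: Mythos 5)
Your core argument is correct, and in substance it coincides with the paper's: the paper delegates the domain and cancellation properties of the weak and strong rings to Section 6.5 of Hammack--Imrich--Klav\v{z}ar, whose proof builds a ring homomorphism from $\G$ into an integral domain of polynomials, and then transports everything to the Zykov ring by complement duality --- exactly your last step. Your monoid-ring formulation is that same polynomial realization seen intrinsically: unique prime factorization for connected graphs makes your monoid $M$ of connected graphs under $\square$ (resp.\ $\osquare$) a free commutative monoid on the multiplicative primes, so the monoid ring $\mathbb{Z}[M]$ is precisely a polynomial ring over $\mathbb{Z}$; your ordered-group leading-term argument is a more general-purpose route to the domain property, and your remark that preservation of connectedness under the product is what excludes the tensor ring (Weichsel) identifies the right structural hypothesis. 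The formal step from ``no zero divisors'' to cancellation, and the duality transfer, are both fine and match the paper.

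One genuine caveat, which your own framework makes visible: because $M$ is free, $\mathbb{Z}[M]$ is a polynomial ring and hence a UFD in the strict ring-theoretic sense. Therefore the clause ``not a unique factorization domain'' --- in the lemma, and in your closing claim that ``unique-factorization status'' transports along the isomorphism --- can only refer to the multiplicative monoid of actual graphs, not to the ring. The Imrich--Klav\v{z}ar examples produce two distinct factorizations into factors that are irreducible as graphs, but those factors are reducible in the ring: the analogue of $1+x^3$ splits as $(1+x)(1-x+x^2)$, with a signed (virtual) factor, and the two graph factorizations refine to one common irreducible factorization. So what your proof (and the paper's) actually establishes is non-uniqueness of factorization of graphs into graph factors; stated for the ring itself, the non-UFD claim would contradict the polynomial-ring identification you set up, so phrase that clause at the level of the monoid rather than inheriting it blindly through the ring isomorphism.
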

\begin{proof}
This is covered in section 6.5 of \cite{HammackImrichKlavzar}. 
The proof given there defines a ring homomorphism from $\G$ to 
an integral domain of polynomials. Having the property for the 
strong ring gives the property for the Zykov ring by the complement
duality. 
\end{proof}

\paragraph{}
Lets for a moment go back to the additive monoid. Before seeing the duality connection, we
searched for a direct proof of the unique prime factorization property for the Zykov 
monoid $(\G,+)$, where $+$ is the join.
The unique prime factorization property can be illustrated that we know about
$f$-generating functions $f(G) = 1+\sum_{k=0} v_k x^{k+1}$ of $G$ and 
$f(H) = 1+\sum_{k-0} w_k x^{k+1}$ of $H$ are the same, as $Z[x]$ is a unique
factorization domain. This means that the $f$-vectors of $G$ and $H$ agree. 
About the cancellation property: 
assume $G+K = H+K$, then $G=H$. Especially, if $G'$ is a suspension of $G$
and $H'$ is a suspension of $H$ and $G',H'$ are isomorphic graphs, then $G$ and
$H$ are isomorphic graphs.
To show that $G + K_1 = H + K_1$ implies $G = H$ one can use that every simplex 
containing the new point $x$ and every isomorphism $T$ from $G + K_1$ to $H + K_1$ induces a
permutation on the facets. Assume that the isomorphism $T$ maps $x$ to $y$, the
other case where the isomorphism maps $x$ to $x$ is similar. The isomorphism of dimension $k+1$
simplices induces after removing $x$ and $y$ an isomorphism of $k$-simplices from $G$ to $H$. 
Now, as all $k$, especially an isomorphism of $dim=1$ simplices which is a graph homomorphism.

\paragraph{}
We have now a ring of networks in which we can do some computations.
Lets look at some examples. Since $K_n \cdot K_m = K_{nm}$, one can abbreviate 
$n G$ for $K_n \cdot G$.  \\

a) $P_n \cdot P_m = P_{nm}$ \\
b) $K_n \cdot K_m = K_{nm}$ \\
c) $3 P_2$  octahedron \\
d) $P_2 \cdot G$ doubling  \\
e) $C_4 \cdot C_4 = (K_2+P_2) \cdot (K_2 + P_2) = K_4+ C_4 + P_4 $  \\
f) $K_2 \cdot C_n$ three sphere  \\
g) $K_2 \cdot P_2 = 2 P_2$ kite graph

\begin{figure}[!htpb]
\scalebox{1.0}{\includegraphics{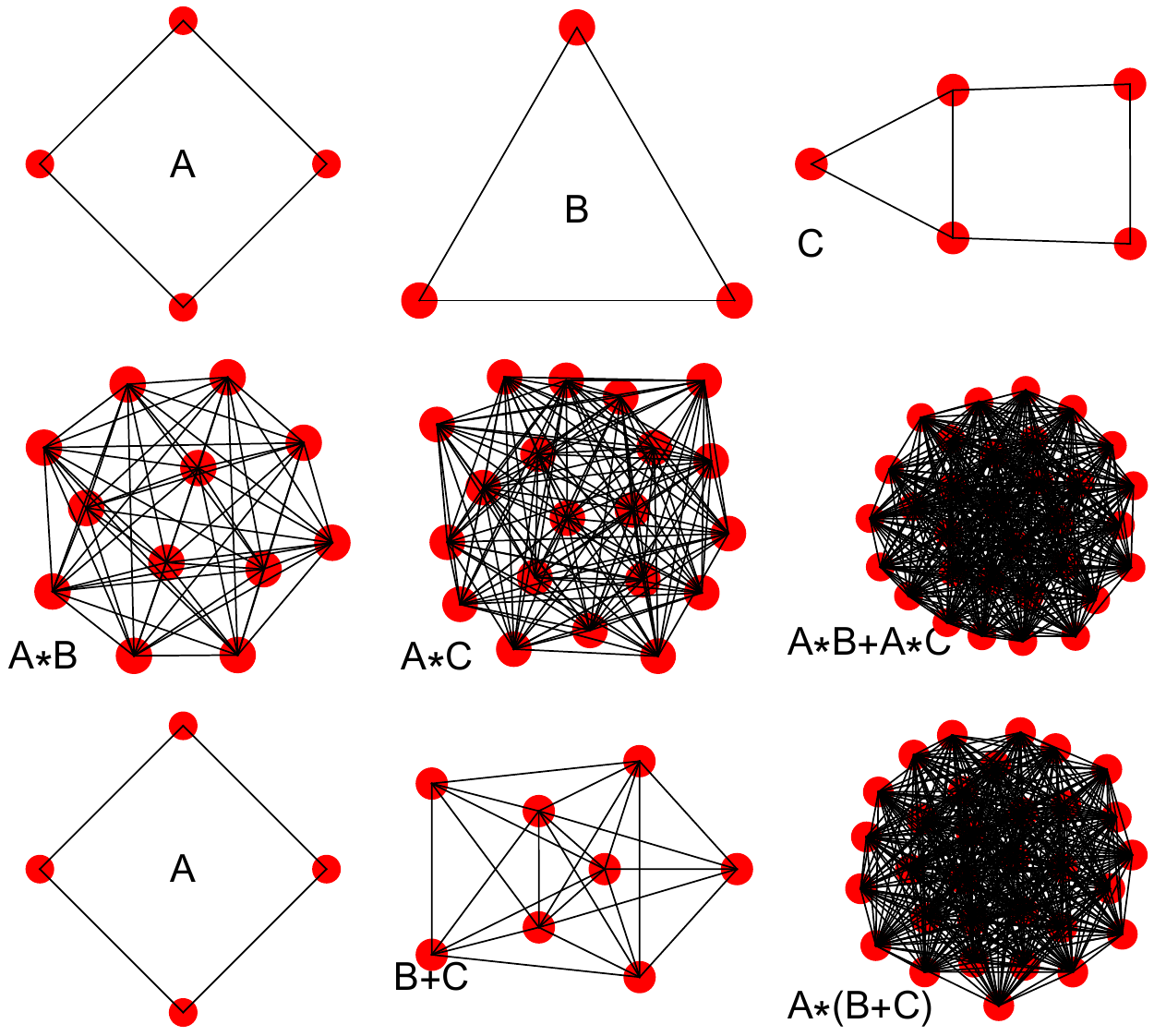}}
\caption{
Illustrating the distributivity in the Zykov ring. 
}
\end{figure}

\paragraph{}
With the disjoint union of simplicial complexes as addition, one can
look at the simplex Cartesian product $\times$ as a multiplication.
The set theoretical Cartesian product of two simplicial complexes is not a simplicial
complex in general and the Cartesian product defined in \cite{Kuenneth} is not associative as
$G_2=(G \times K_1) \times K_1 \neq G_1 \times (K_1 \times K_1) = G_1$. In order
to get a ring structure, one can define the product only on refinements of complexes 
$G_1 \times H_1 = G \times H$. The problem is that the product is then no more a refinement of a 
complex but when restricting to the set of graphs which are expressible as $G_1(1) \times G_1(2) \times \cdots \times G_1(k)$
we are fine. This is more convenient than building a data structure of CW complexes and essentially 
means to look at the structures in the Stanley-Reisner ring. We used that ring in \cite{Kuenneth}
without being aware of the Stanley-Reisner picture. 
All objects $G=(G_1 \times  \dots \times G_n)$ are equipped with
a CW structure so that the Euler characteristic is the product. There is a corresponding
connection graph $L'$ for which the energy is the Euler characteristic of $G$. When extending
Euler characteristic functional to the ring, it becomes a ring homomorphism to $Z$.
There is a relation between Cartesian and tensor product:
the connection graph of the Cartesian product is the tensor product of the connection graphs.

\paragraph{}
The tensor product has some relation to the Grothendieck product: the connection graph
Laplacians tensor if the product of the simplicial complexes is taken. This is an 
algebraic statement. \\
Its not true in general however that the Fredholm connection Laplacian 
of $G \times H$ is the tensor product of the Fredholm connection Laplacian of $G$ and $H$. 
It is only that the Fredholm connection matrices tensor.

\section{More examples} 

\paragraph{}
In this section, we perform a few example computations in the Zykov ring. 
The main building blocks are the {\bf point graphs} $P_n = (\{1,2,3, \dots, n\}, \emptyset)$, 
the {\bf complete graphs} $K_n = \overline{P_n}$, the {\bf cycle graphs} $C_n$ (assuming $n \geq 4$),
the {\bf star graphs} $S_n= P_n + 1$, (with the understanding that $S_2=P_2+1=L_2$ is the linear
graph of length $2$ and $S_1=P_1+1=L_2$). Then we have the 
{\bf wheel graphs} $C_n + 1$, the {\bf complete bipartite
graphs} $K_{n,m} = P_n + P_m$. Furthermore, we look at the
{\bf kite graph} $K=P_2 + K_2$, the {\bf windmill graph} $W=P_3 + K_2$ as well as 
the {\bf cross polytopes} $S^n = (n+1) \cdot P_2$ which are $n$-spheres, 
where especially $S^0=P_2, S^1=C_4$ and $S^2=O$, the {\bf octhahedron} graph are $0,1$
and $2$-dimensional spheres. (There should be no confusion as we do not use a graph with name $S$
as $S^d$ is not a power in the Zykov ring but a sum of zero dimensional spheres) Most of these
graphs are contractible with trivial cohomology $b(G)=(1,0,0,\dots)$.  The ones which are not, have the Betti numbers 
$b(P_n)=(n,0,0,..)$, $b(C_n)=(1,1,0,0,..)$, $b(S^n) = (b_0, \dots, b_n) = (1,0,0,\dots,1)$,
$b(K_{n,m}) = (1,(n-1)(m-1),0,\dots)$. 

\paragraph{}
{\bf Example A: the square of a circle}. 
Lets compute the square of the circular graph $C_4$ and show
\begin{center} \fbox{$C_4^2 = 2 K_{4,4}$. } \end{center} 

We use that $P_n \cdot P_m = P_{nm}$ and $P_n + P_m = K_{n,m}$ to get

\begin{eqnarray*}
C_4^2 &=& (2 P_2)^2 = 4 P_2^2 = 4 P_4  \\
      &=& 2 (P_4 + P_4) = 2 (K_{4,4}) \; . 
\end{eqnarray*}

It follows for example that $W_4^2=(1+C_4)^2 = 1+4 P_2 + 2 K_{4,4}$.

\begin{figure}[!htpb] \scalebox{0.3}{\includegraphics{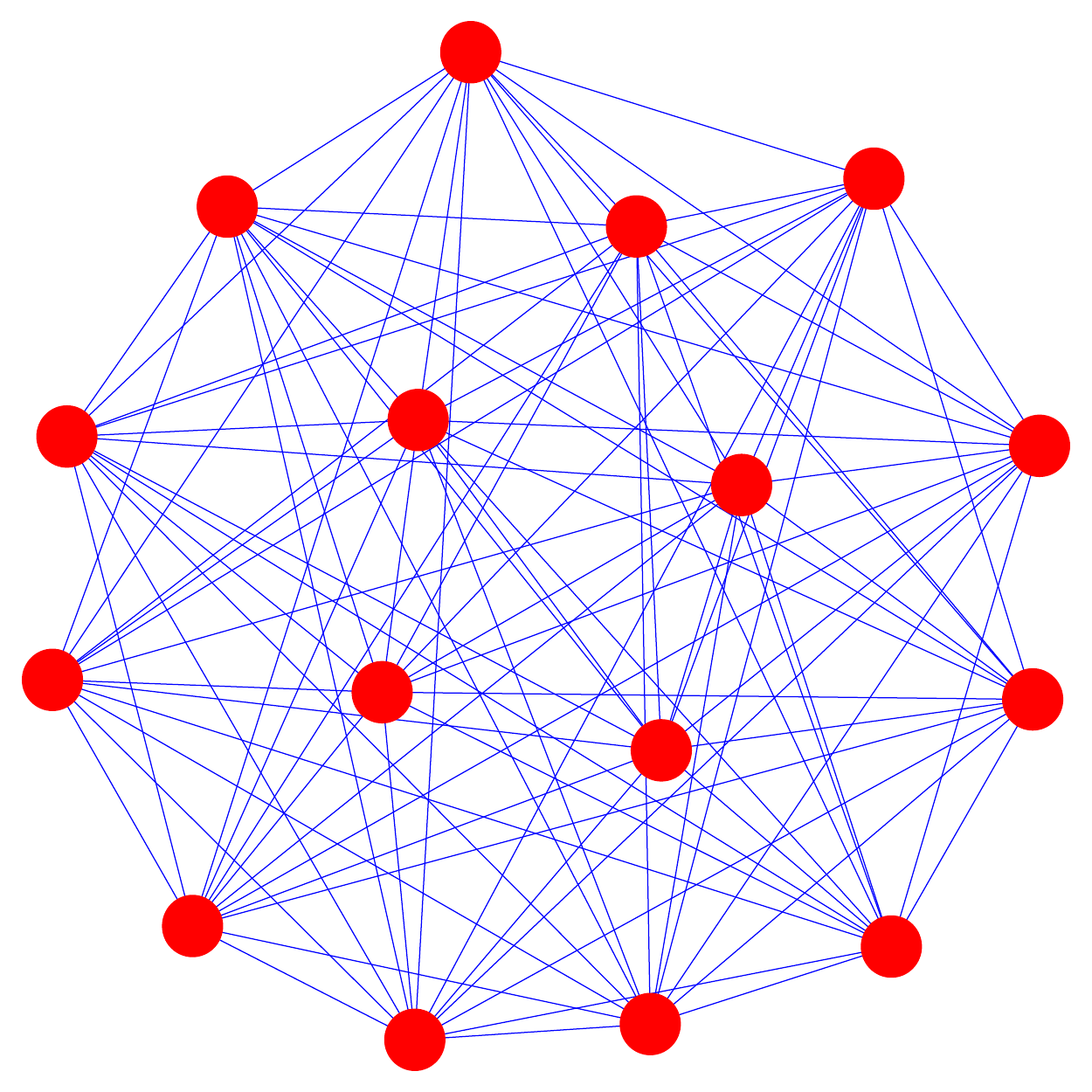}}
\caption{ The Zykov square of the circle $C_4$ is $K_{4,4}+K_{4,4}$.} \end{figure}

\paragraph{}
{\bf Example B: the square of a kite}.
Lets compute the square of the kite graph $K$ and show
\begin{center} \fbox{$K^2 = P_4 + K_4 + S^3$. } \end{center} 

We use
$P_n \cdot P_m = P_{nm}$ and $K_n \cdot K_m = K_{nm}$ and 
$2 C_4 = C_4 + C_4 = 4 P_2 = S^3$ as well as $K_2 \cdot P_2 = P_2 + P_2 = C_4$. 

\begin{eqnarray*}
K^2 &=& (P_2 + K_2)^2 = (P_2 + K_2) \cdot (P_2 + K_2) \\
    &=& (P_4 + K_4 + 2 K_2 \cdot P_2)  \\
    &=& P_4 + K_4 + 4 P_2 = P_4 + K_4 + S^3 \; . 
\end{eqnarray*}

\begin{figure}[!htpb] \scalebox{0.3}{\includegraphics{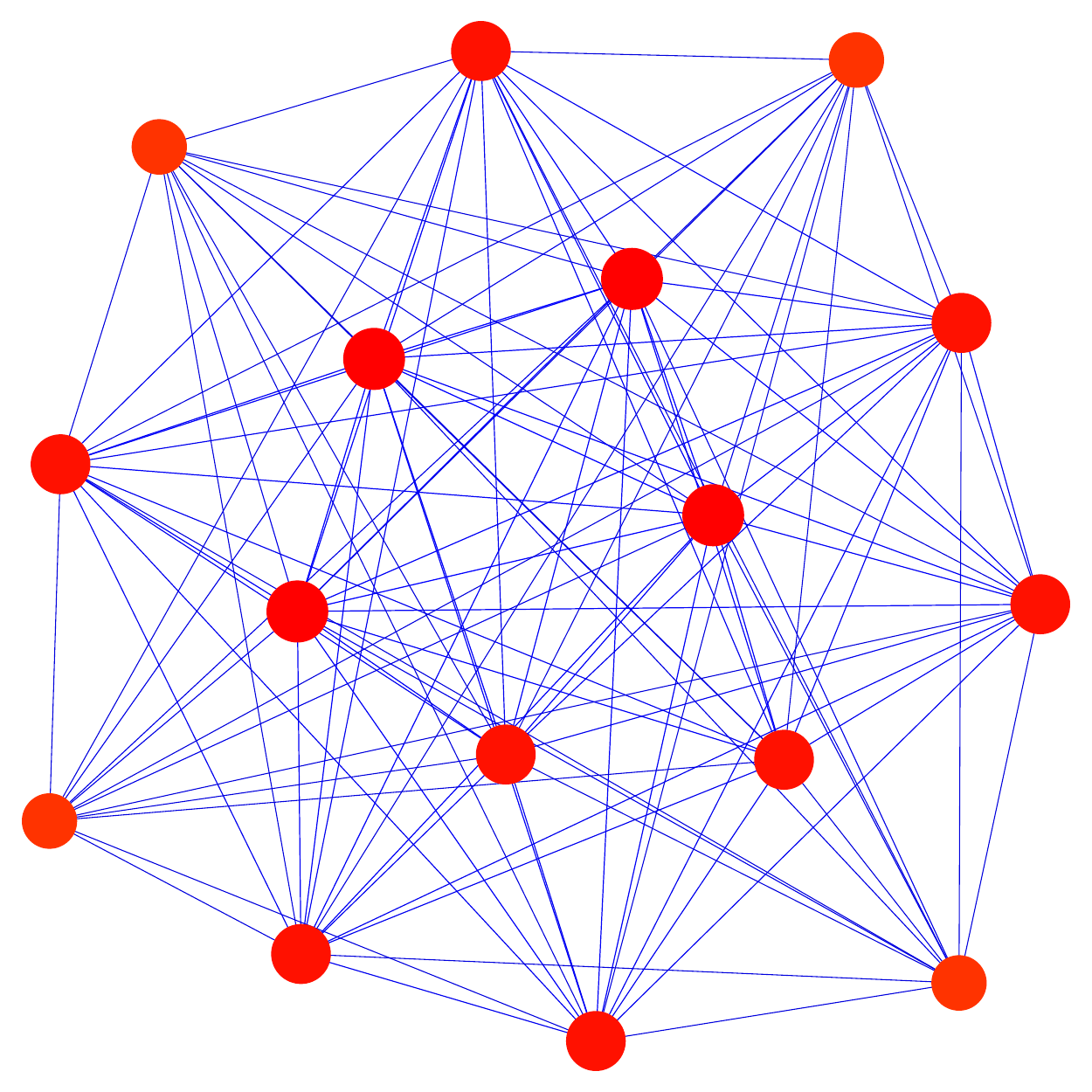}}
\caption{ The Zykov square of the kite graph is $P_4+K_4+S^3$.} \end{figure}

\paragraph{}
{\bf Example C: the square of a star.}  
\begin{center} \fbox{$S_n^2  = S_{n^2} + K_{n,n}$. } \end{center} 

We use that $S_n = P_n +P_1$ and $P_n \cdot P_m = P_{nm}$ and $P_n + P_m = K_{n,m}$ to 
get 

\begin{eqnarray*}
 S_n^2 &=& (P_1+P_n)^2 = P_1^2 + P_{n^2} + 2 P_1 \cdot P_n  \\
       &=& P_{n^2} + P_1 + 2 P_n = = S_{n^2} + (P_n + P_n) \\
       &=& S_{n^2} + K_{n,n} \; . 
\end{eqnarray*}

\begin{figure}[!htpb] \scalebox{0.3}{\includegraphics{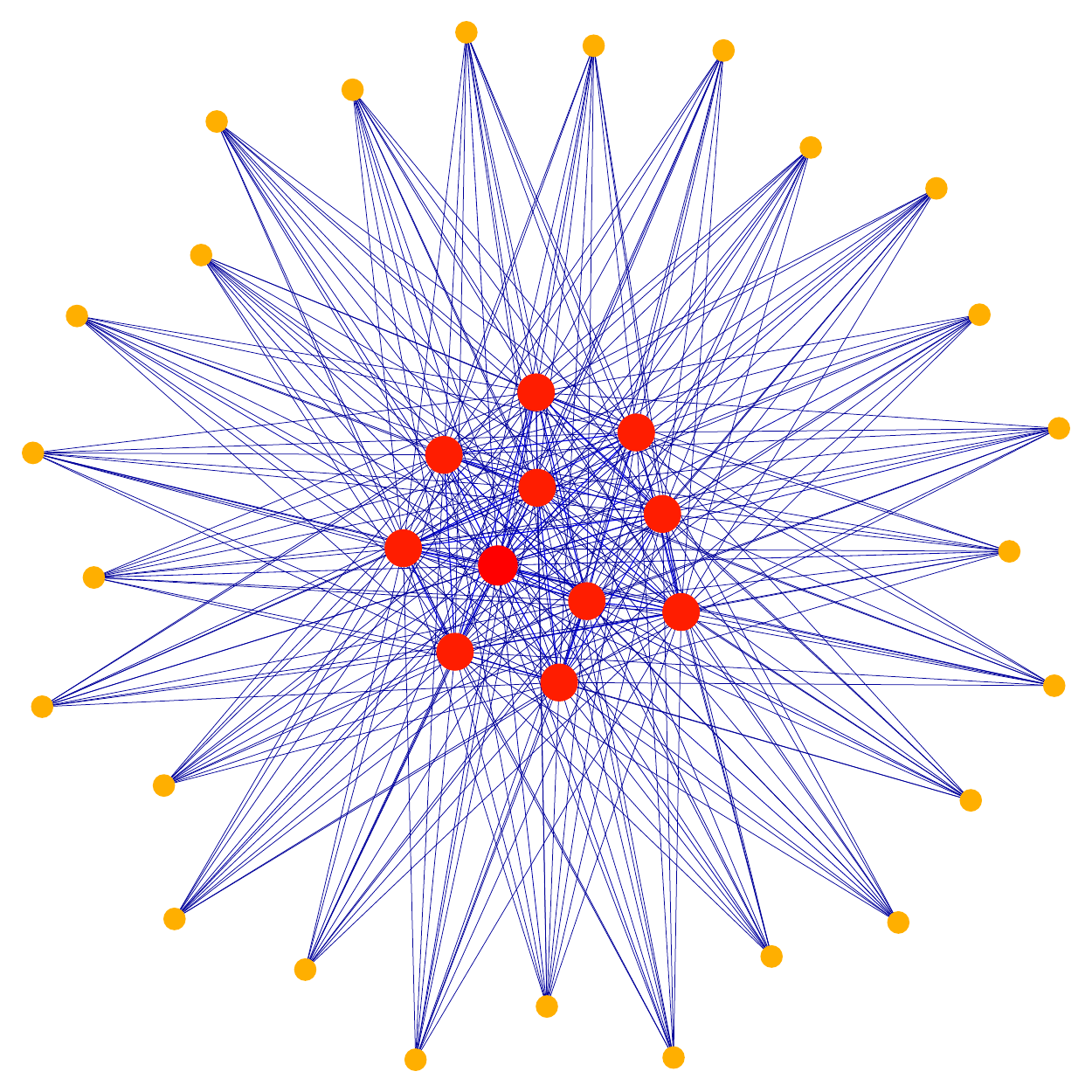}}
\caption{ The Zykov square of a star graph is the sum of a star graph and bipartite graph.} \end{figure}

\paragraph{}
{\bf Example D: the square of the windmill.}  \\
\begin{center} \fbox{$W^2  = P_9 + 4 S_3$. } \end{center} 

We use the definitions $W=P_3+K_2$ and $S_n=P_1+P_n$ and $P_n \cdot P_m = P_{nm}$ 
as well as $K_n \cdot K_m = K_{nm}$ to get

\begin{eqnarray*}
 W^2 &=& (P_3+K_2)^2 = P_9 + K_4 + 2 P_3 K_2  \\
     &=& P_9 + K_4 + 4 P_3 = P_9 + K_4 ( K_1 + P_3) \\
     &=& P_9 + 4 S_3 \; . 
\end{eqnarray*}

\begin{figure}[!htpb] \scalebox{0.3}{\includegraphics{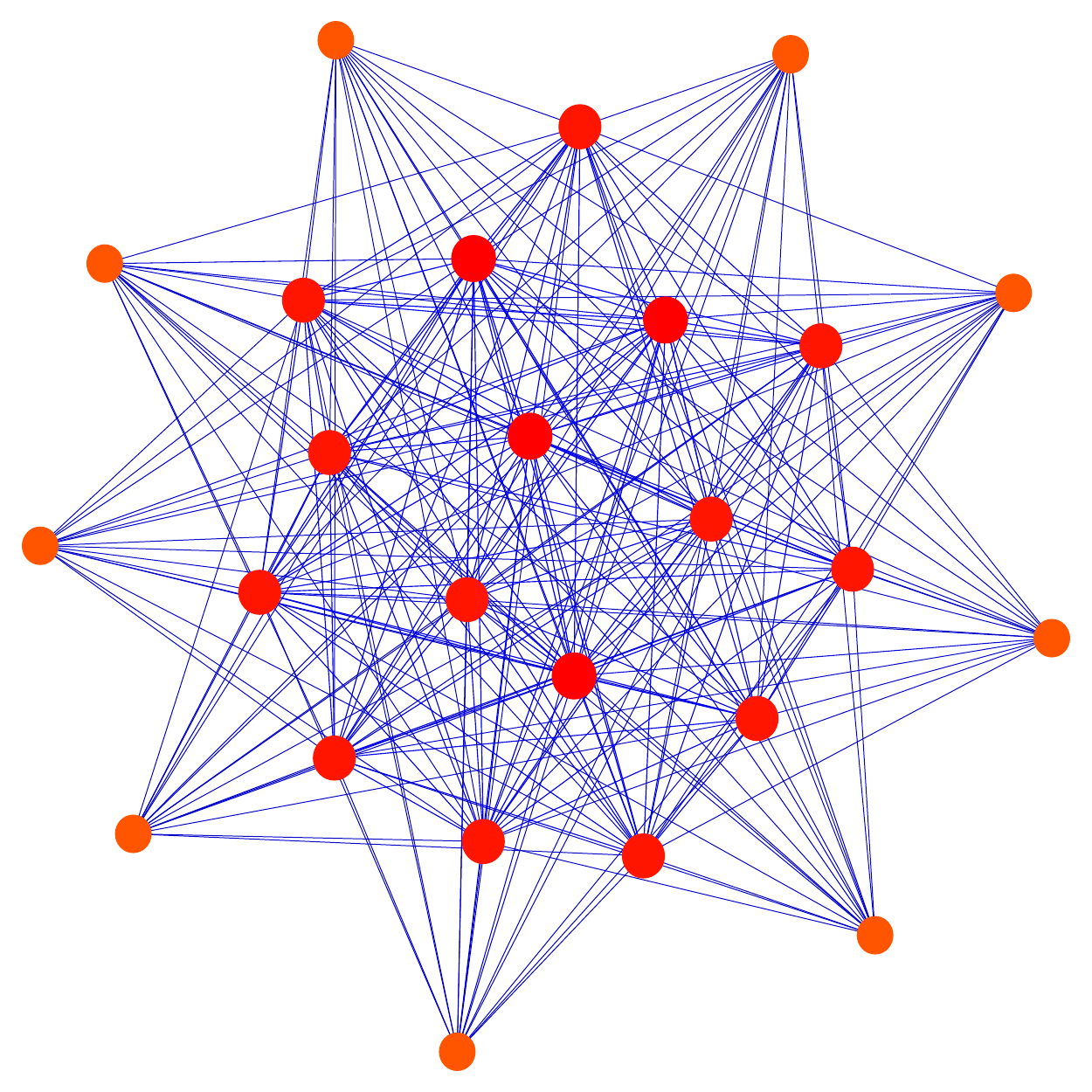}}
\caption{ The Zykov square of the windmill graph is $P_9+4 S_3$.} \end{figure}

\paragraph{}
{\bf Example E: the square of a complete bipartite graph.} 
\begin{center} \fbox{$K_{n,m}^2  = K_{n^2,m^2} + K_{nm,nm}$. } \end{center} 

By definition, we have $K_{n,m} = P_n + P_m$ and so $K_{n,m} + K_{n,m} = P_{2n} + P_{2m} = K_{2n,2m}$.
But now to the product: 

\begin{eqnarray*}
 K_{n,m}^2 &=& (P_n + P_m)^2 = P_n^2 + 2 P_n \cdot P_m + P_m^2 \\
           &=& P_{n^2} + P_{m^2} + (P_{nm} + P_{nm})  \\
           &=& P_{n^2} + P_{m^2} + K_{nm,nm} \\
           &=& K_{n^2,m^2} + K_{nm,nm} \; . 
\end{eqnarray*}

In the special case where $n=m$, we have 
\begin{center} \fbox{$K_{m,m}^k  = 2^k P_m^k$. } \end{center} 
Just write $(P_n+P_n)^k = (2 P_n)^k$. \\

We see that complete subgraphs are multiplicatively closed. This can also be seen 
by diagonalization: $\overline{K_{n,m}} = K_n \oplus K_m$. So that 
$\overline{K_{n,m}} \osquare \overline{K_{k,l}}  =  (K_n \oplus K_m) \osquare (K_k \oplus K_l)$
which can now be factored out and more generally identities like
$K_{n,m} \cdot K_{k,l}  = K_{nk,nl} + K_{mk,ml}$ hold.

\begin{figure}[!htpb] \scalebox{0.3}{\includegraphics{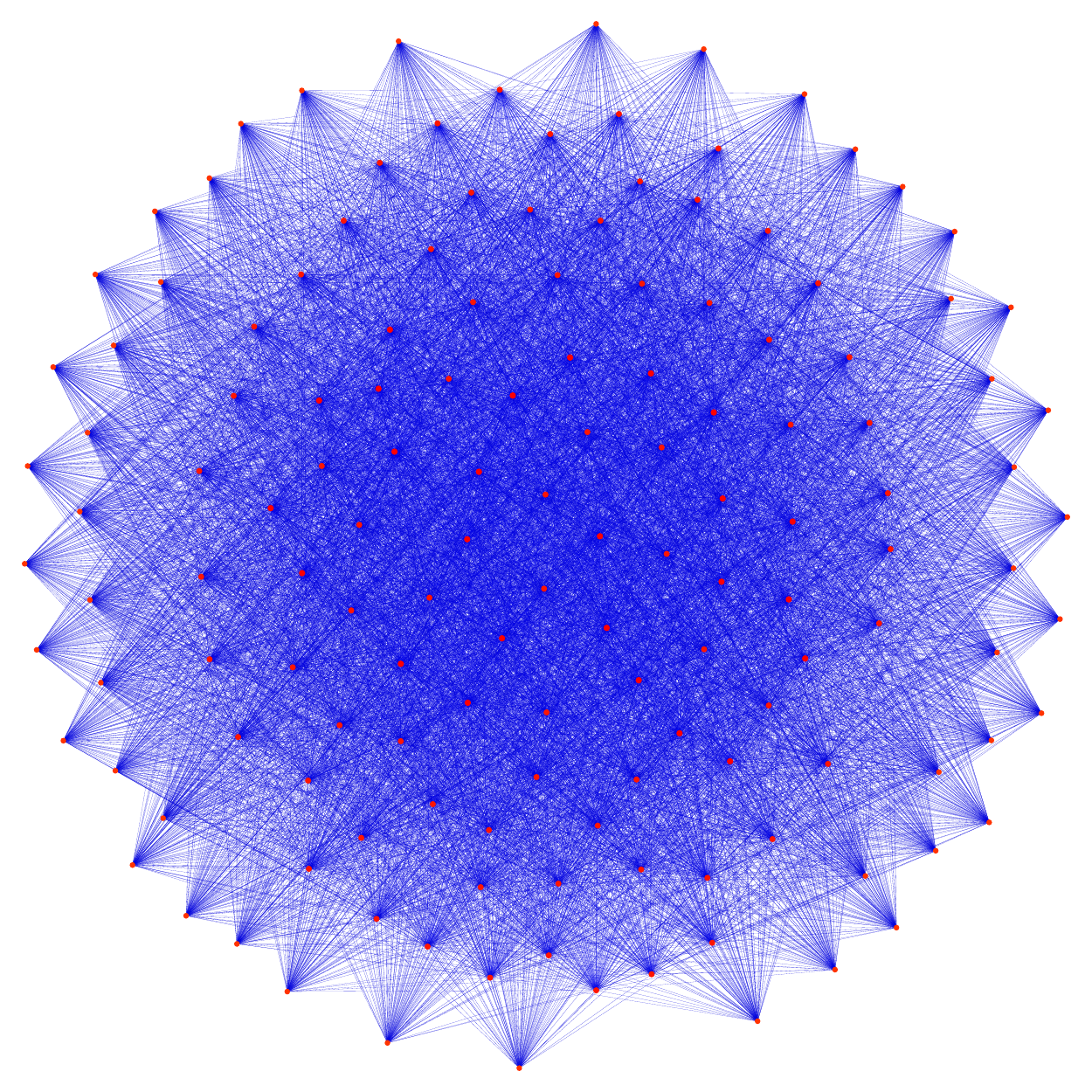}}
\caption{ The Zykov square of a bipartite graph $K_{n,m}$ is a sum of complete bipartite graphs.} \end{figure}

\paragraph{}
{\bf Example F: the square of the 3-sphere.} \\
\begin{center} \fbox{$(S^3)^2  = 4 P_4$. } \end{center} 

The definition is $S^d = (d+1) P_2 = P_2 + \cdots + P_2$ 
so that $(S^d)^2 = (d+1)^2 P_2^2 = (d+1)^2 P_4$
Especially 
$$ (S^3)^2  = K_{(3+1)^2} \cdot P_4 = 16 P_4 \; . $$

In the same way, we can compute the square of the octahedron, the 2-sphere, as 
\begin{center} \fbox{$O^2  = 9 P_4$. } \end{center} 

\begin{figure}[!htpb] \scalebox{0.3}{\includegraphics{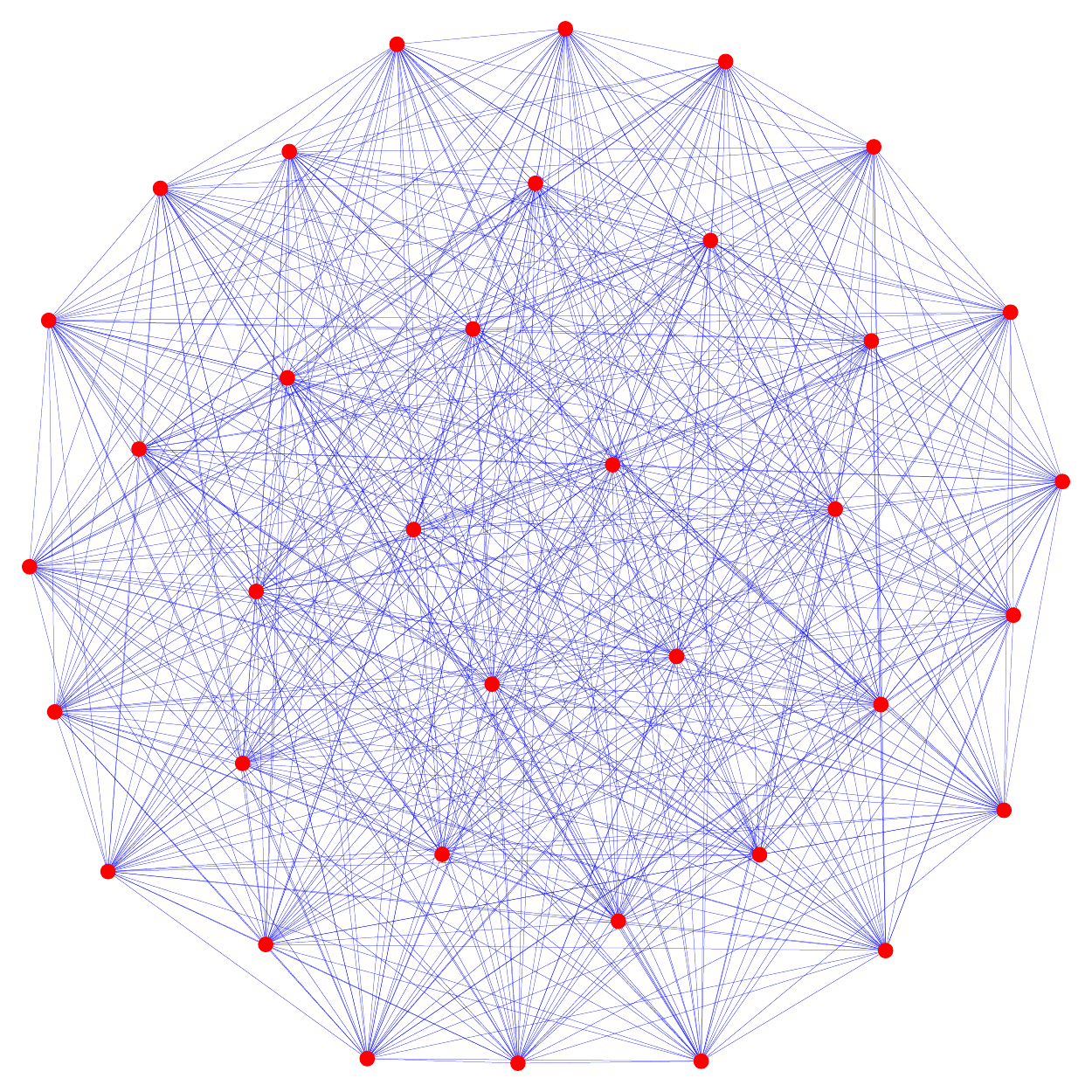}}
\caption{ The Zykov square of a three sphere.} \end{figure}

\paragraph{}
{\bf Example G: Subtracting a star from a sphere} \\
\begin{center} \fbox{$(C_4-S_4)\cdot (K_{4,4} - S_{16}$. } \end{center}

The computation
$$ (C_4-S_4) \cdot (C_4+S_4) = C_4^2-S_4^2 = 2 K_{4,4} - (S_{16} + K_{4,4} $$
follows from A) and B). We mention that as it is clear if we see somewhere an $A-A$, then
it can be reduced to $0$. \\

\paragraph{}
Most of these examples are {\bf $P$-graphs}, graphs generated by the point graphs $P_n$. 
Every $P$-graph has the form $\sum_{k=1}^n a_k P_k$, where $a_k \in \mathbb{Z}$.
Here is a summary of some small ``numbers": 

\begin{itemize}
\item $n=K_n$  complete graphs
\item $P_k P_n = P_{kn}$ point graphs
\item $P_k+P_n = K_{k,n}$ complete bipartite graphs
\item $(d+1) P_2 = S^d$ spheres
\item $2 P_2 = C_4$ circle graph
\item $1+2 P_2 = W_4$ wheel graph
\item $2+2 P_2$  three ball
\item $2+P_2$  kite graph
\item $1+P_2$  linear graph
\item $1+P_n$  star graphs
\item $1+2P_2$  wheel graph
\item $2+P_3$  windmill graph
\end{itemize}

\begin{figure}[!htpb]
\scalebox{0.08}{\includegraphics{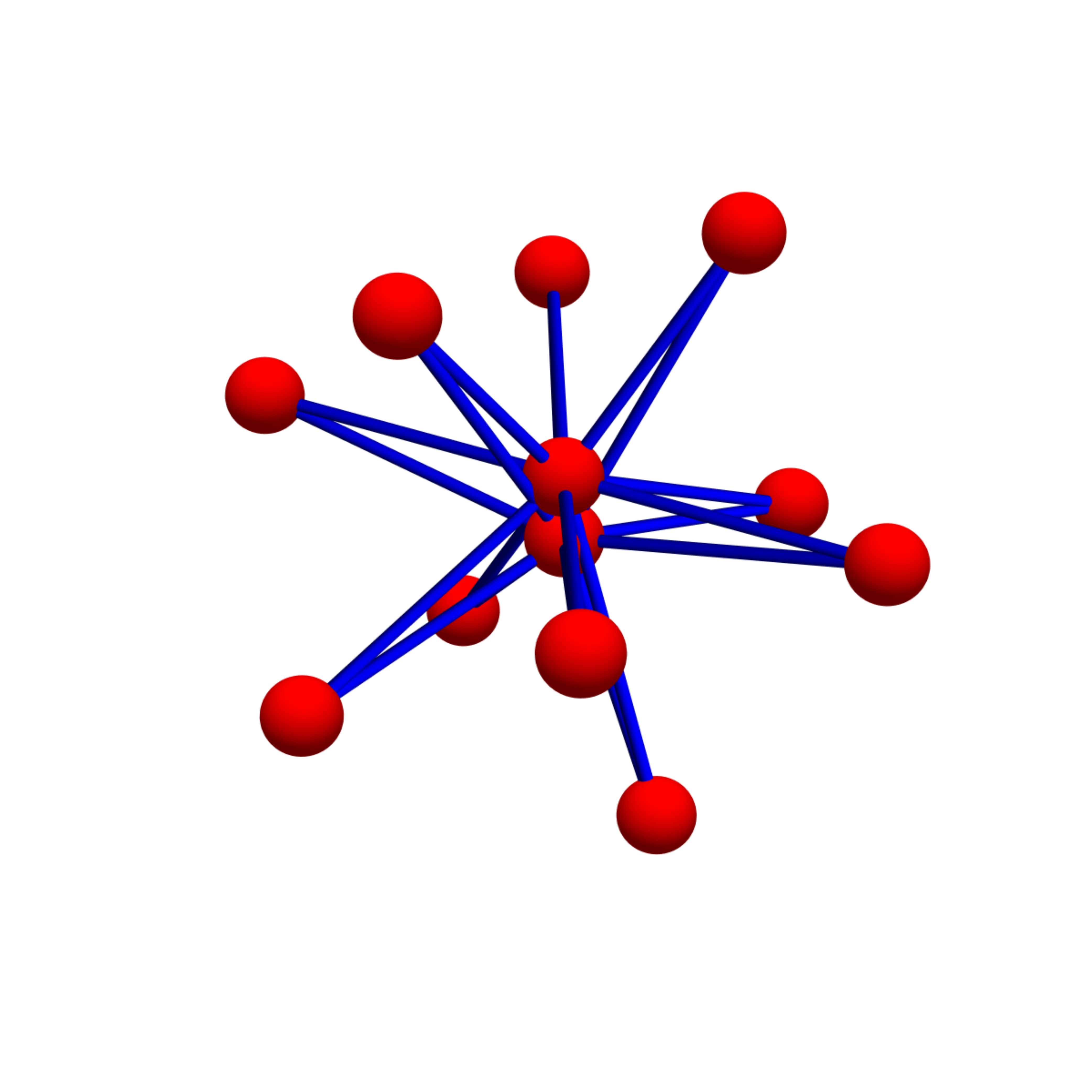}}
\scalebox{0.08}{\includegraphics{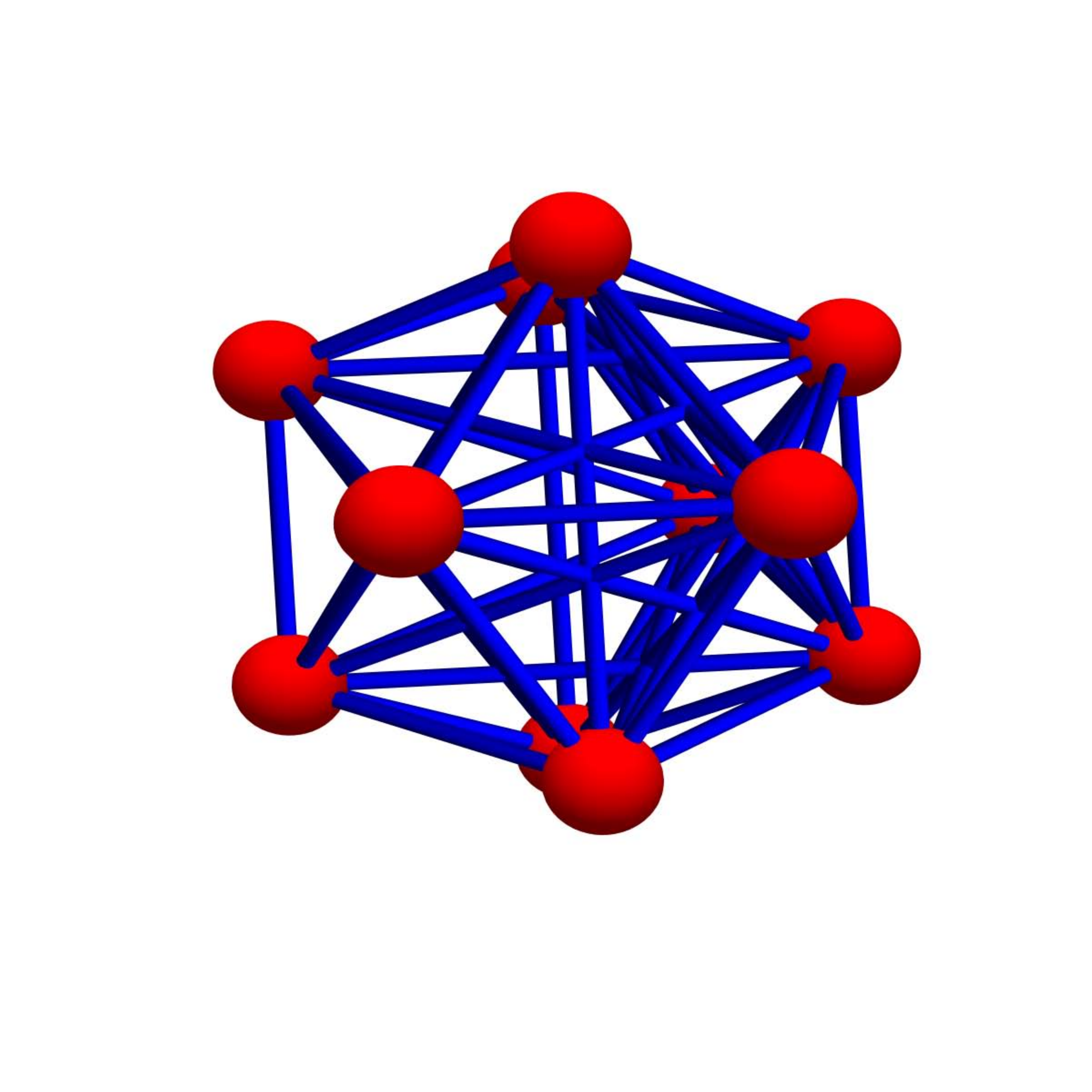}}
\scalebox{0.08}{\includegraphics{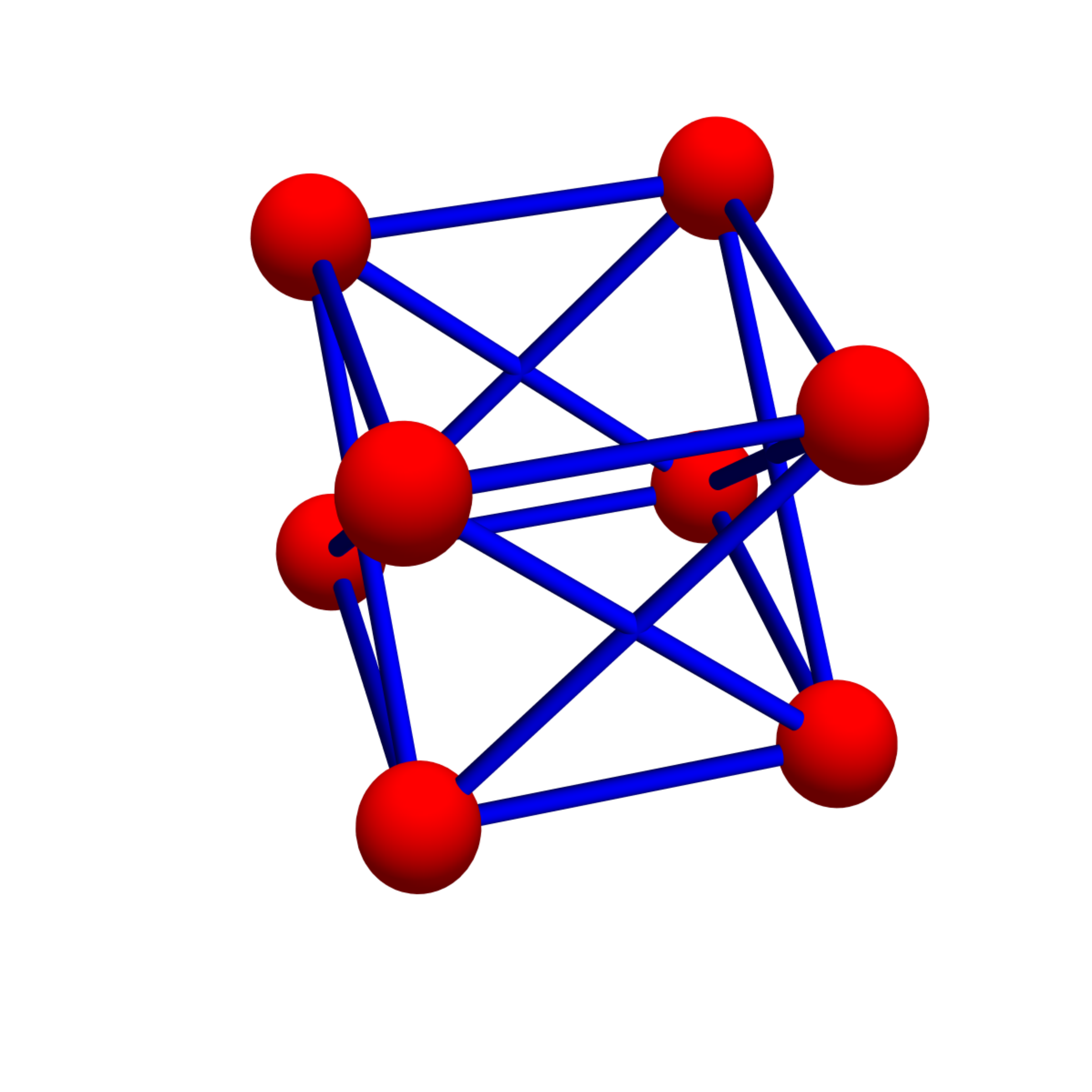}}
\scalebox{0.08}{\includegraphics{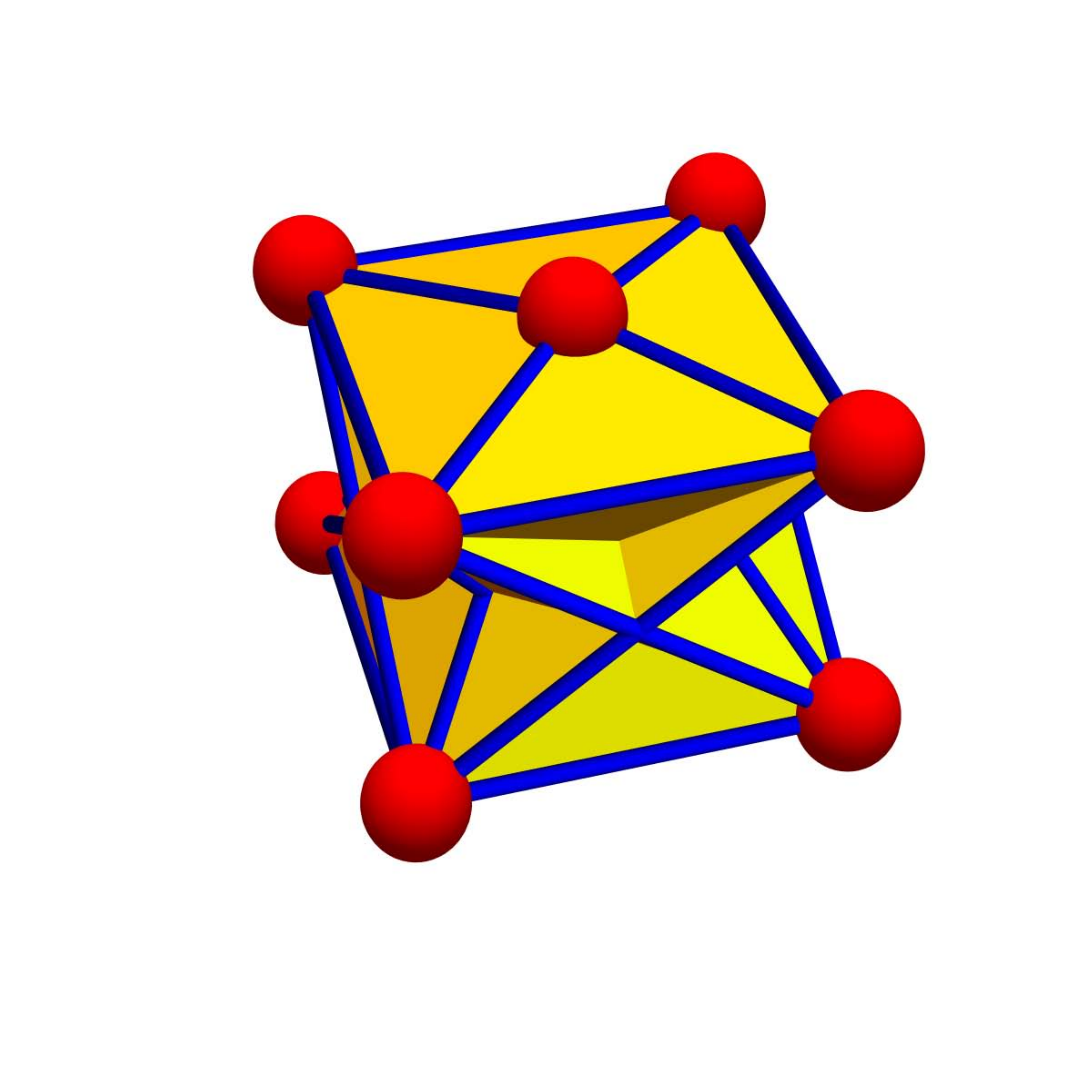}}
\scalebox{0.08}{\includegraphics{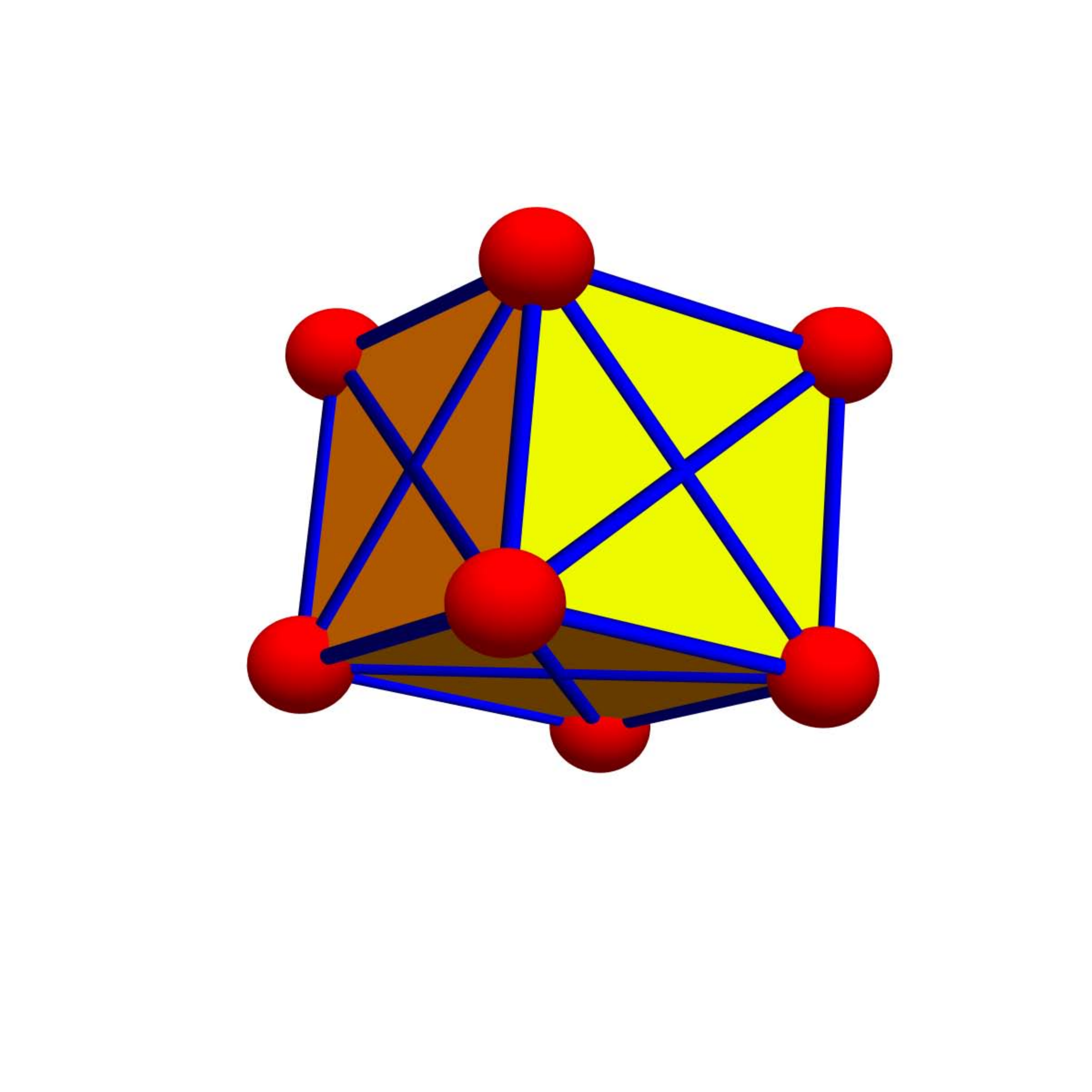}}
\scalebox{0.08}{\includegraphics{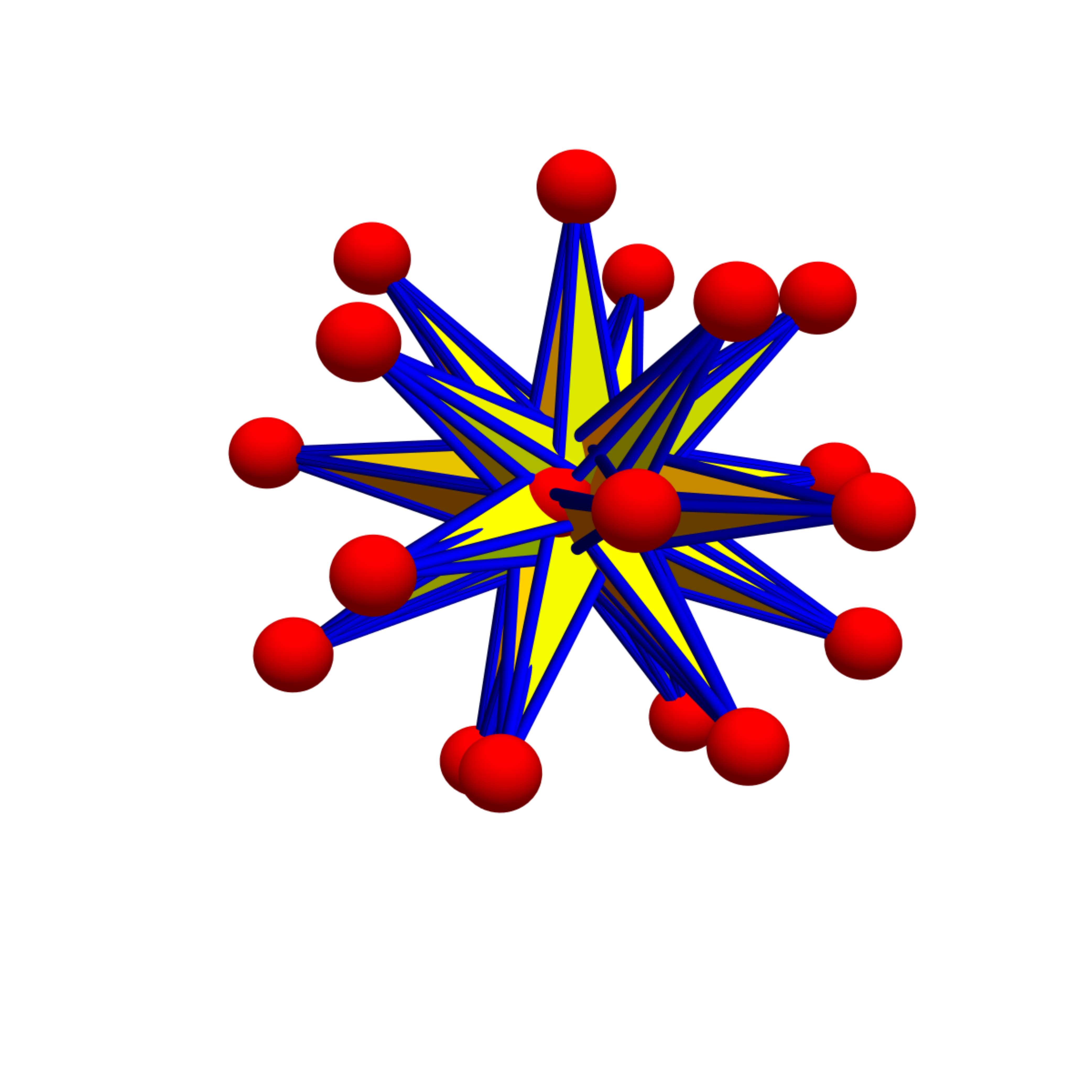}}
\caption{
Some graphs in the subring of $P$-graphs. 
We see first $S_6+P_2$, then $6+P_2$, then 
$2 P_2^2$, then $2 P_2^2 +P_2$, then
$4 P_2$ and finally $P_2+(1+P_6) P_4$. 
}
\end{figure}

\section{Additive Zykov primes}

\paragraph{}
There are two type of primes in the Zykov ring of complexes. The {\bf additive primes} for the 
monoid $(\G,+)$ and then the {\bf multiplicative primes} in the monoid $(\G, \cdot)$. 
For the integers $\mathbb{Z}$, the additive prime factorization is trivial as the only additive prime $1$. 
In the Zykov ring $(\G,+, \cdot)$, the additive primes are a bit more interesting but
easy to characterize. Here again, $\overline{G}$ is the complement of the graph $G$
so that $\overline{P_n}=K_n$ and $\overline{K_n}=P_n$. 

\begin{lemma} 
A graph $G$ is an additive Zykov prime if and only if $\overline{G}$ is connected. 
\end{lemma}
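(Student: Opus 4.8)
The plan is to reduce the statement entirely to the complement duality already established, namely the identity $G + H = \overline{\overline{G} \oplus \overline{H}}$ together with the fact that complementation is an involution on finite simple graphs. Since the primes for the disjoint union $\oplus$ are exactly the connected graphs (a nonempty graph admits a nontrivial decomposition $H_1 \oplus H_2$ into nonempty pieces precisely when it is disconnected), it suffices to show that the complement functor sets up a bijection between nontrivial additive factorizations of $G$ under $+$ and nontrivial disjoint-union decompositions of $\overline{G}$.

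First I would record the transport of factorizations. Applying the complement to both sides of the duality identity and using $\overline{\overline{X}} = X$, I obtain that $G = A + B$ holds if and only if $\overline{G} = \overline{A} \oplus \overline{B}$. Both directions are immediate: if $G = A + B$ then $\overline{G} = \overline{A+B} = \overline{\overline{\overline{A}\oplus\overline{B}}} = \overline{A} \oplus \overline{B}$, and conversely, if $\overline{G} = C_1 \oplus C_2$, then setting $A = \overline{C_1}$ and $B = \overline{C_2}$ gives $A + B = \overline{\overline{A}\oplus\overline{B}} = \overline{C_1 \oplus C_2} = \overline{\overline{G}} = G$. Because complementation preserves the vertex set, $A,B$ are nonempty exactly when $C_1,C_2$ are, so the correspondence respects nontriviality.

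With this bijection in hand, the two directions of the lemma follow at once. If $\overline{G}$ is connected, then $\overline{G}$ admits no decomposition $C_1 \oplus C_2$ into nonempty summands, hence $G$ admits no factorization $A + B$ into nonempty summands, so $G$ is an additive Zykov prime. Conversely, if $\overline{G}$ is disconnected, splitting $\overline{G} = C_1 \oplus C_2$ into one connected component and the rest yields the nontrivial factorization $G = \overline{C_1} + \overline{C_2}$, so $G$ is not prime.

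There is no real obstacle here; the content is carried entirely by the duality lemma. The only points needing a line of care are that complementation is genuinely an involution (so that $A \mapsto \overline{A}$ is a bijection and no factorizations are lost or spuriously created) and that it fixes the vertex set (so that nonemptiness is preserved on both sides). Both are available from the discussion above, which is exactly why the characterization can be \emph{derived again} from the $\oplus$-side theory rather than proved from scratch.
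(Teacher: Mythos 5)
Your proof is correct and follows essentially the same route as the paper: both reduce the statement via the complementation duality $G+H=\overline{\overline{G}\oplus\overline{H}}$ to the fact that the additive primes for the disjoint union $\oplus$ are exactly the connected graphs. The only difference is that you spell out explicitly the transport of factorizations that the paper compresses into the assertion that complementation is a ring isomorphism between the Zykov ring and the strong ring.
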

\begin{proof}
The complement operation is a ring isomorphism from the Zykov ring $(\G,+ \cdot)$
to the strong ring $(\G,\oplus,\osquare)$, mapping $0$ to $0$, $1$ to $1$
and satisfying $\overline{H \cdot G} = \overline{H} \osquare \overline{G}$ and
$\overline{H + G} = \overline{H} \oplus \overline{G}$, both additive and multiplicative
primes in one ring correspond to additive and multiplicative primes in the other ring. 
In the additive monoid $(\G,\oplus)$, the primes are the connected graphs. 
\end{proof} 

{\bf Remarks}. \\
{\bf 1)} The volume of a graph is the number of facets, complete subgraphs $K_{d+1}$,
where $d$ is the maximal dimension. We first thought that the primality of the volume
assures that a graph is an additive prime. This is not the case since there are primes of
volume $1$. An example is $G=\overline{L_3 \oplus C_4}$, where $L_3$ is the linear graph of length $3$ and
$C_4$ the cyclic graph of length $4$. Now $G$ has prime volume $2$ but it is obviously not prime. Indeed,
one of the factors $\overline{L_3} = K_2$ has volume $1$. 

{\bf 2)} The property of being prime is not a topological one. There are spheres like the octahedron
which can be factored $O=P_2 + P_2 + P_2$, and then there circles like $C_5$ which are prime
and can not be factored. But as in the additive integer case, the "fundamental theorem of 
additive network arithmetic" is easy, also with a direct proof:

\thm{ The additive Zykov monoid has a unique additive prime factorization.  }

\begin{proof}
The disjoint union has a unique additive prime factorization. Now dualize. \\
{\bf Direct proof.}
Assume $G=A+B = C+D$, where $A$ is prime. Following the Euclid type lemma, we prove
that either $A=C$ or $A=D$. \\
If that is not true then $A \cap C$ and $A \cap D$ are
both non-empty. Lets look at the four intersections $A\cap C$, $A \cap D$, $B \cap C$ and
$B \cap D$. Because every element in $A \cap C$ is connected to $A \cap D$, we have $A=A \cap C+A \cap D$.
But this contradicts that $A$ is prime.
\end{proof}

\begin{coro}
Every element in the additive Zykov group of networks can be written as 
$G=U-V$, where $U,V$ are two graphs. 
\end{coro}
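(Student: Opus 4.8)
The plan is to read the statement off directly from the Grothendieck construction of the group $(\G,+)$ and to use the cancellation property to make that reading legitimate. By definition the additive Zykov group consists of equivalence classes of pairs of graphs $(U,V)$ under the relation $U-V \sim C-D$ whenever there is a graph $K$ with $U+D+K = V+C+K$. Thus, purely formally, every group element already comes equipped with a representative pair $(U,V)$, which we write as $U-V$, so existence of the desired form is built into the construction. The only thing that genuinely has to be checked is that this representation is not degenerate, i.e. that the monoid $(\G,+)$ embeds into its group, so that the symbol $U-V$ honestly records two graphs.

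First I would invoke the cancellation property of the join monoid. The theorem just proved gives unique additive prime factorization for $(\G,+)$, the additive primes being those $G$ whose complement $\overline{G}$ is connected; equivalently, via the involution $G+H = \overline{\overline{G} \oplus \overline{H}}$ of the complement lemma, the monoid $(\G,+)$ is isomorphic to the disjoint-union monoid $(\G,\oplus)$, which is freely generated by the connected graphs. A free commutative monoid is cancellative, hence so is $(\G,+)$: from $A+K=B+K$ one gets $A=B$. Cancellation is exactly the hypothesis under which the Grothendieck group of a commutative monoid contains that monoid as a submonoid, and under which the auxiliary graph $K$ in the equivalence relation may be dropped, so that $U-V \sim C-D$ collapses to $U+D = V+C$.

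With cancellation in hand I would conclude as follows. Any element of the group is, by construction, the class of some pair $(U,V)$ with $U,V \in \G$, and cancellation guarantees that the embedding $G \mapsto G-0$ is injective, so that $U$ and $V$ are faithfully recovered from the class. This is precisely the claimed form $G = U-V$. If a canonical representative is wanted, one can additionally use unique prime factorization to cancel every additive prime common to $U$ and $V$, producing a coprime pair $(U',V')$ with $U'-V' = U-V$, in exact parallel with writing an integer as a difference of two coprime natural numbers.

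The hard part is entirely concentrated in the single fact that $(\G,+)$ is cancellative, and that fact is already supplied, either by the complement duality with $(\G,\oplus)$ or directly by the unique-factorization theorem proved immediately above. Beyond this there is no real obstacle: the remaining steps are the routine verification that the Grothendieck group of a cancellative commutative monoid expresses every element as a difference of two monoid elements.
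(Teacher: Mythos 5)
Your proposal is correct and takes essentially the same route as the paper: the paper's own proof consists precisely of invoking the cancellation property ($U+K=V+K$ implies $U=V$), which is exactly the pivot of your argument. You merely add the (valid) justification of cancellation via the complement duality with the disjoint-union monoid, freely generated by connected graphs---a detail the paper leaves implicit, having discussed cancellation earlier.
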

\proof{
The reason is that the cancellation property holds. $U+K=V+K$ implies $U=V$.
}

For integers we can always write an integer as either $n$ or $-n$. The reason
is that the unit $1$ is the only additive prime. But for
networks, we do not have an overlap in general. For example $C_5-C_7$ can not 
be written as a single network similarly as $5/7$ can not be simplified. Sometimes
we can like $K_5-K_7=-K_2$ as in the ring we have $K_5 = K_1 + K_1 + K_1 + K_1=5 K_1$
and $K_5-K_7=5 K_1 - 7 K_1 = -2 K_1$. 

\section{Multiplicative primes}

\paragraph{}
Proving the fundamental theorem of algebra for rational integers  $\mathbb{Z}$
was historically a bit more convoluted. 
As Andr\'e Weyl pointed out, there is a subtlety which was not covered by Euclid
who proved however an important lemma, which comes close. 
Examples of number fields like $\mathbb{Z}[\sqrt{5}]$ show that factorization is not obvious.
We pondered the problem of unique prime factorization for the Zykov product on our own 
without much luck. We finally consulted the handbook of graph products and got relieved as the answer
is known for the strong ring and realizing that this Sabidussi ring is dual to the Zykov ring. 
Lets look at the primes

\begin{lemma}
Every graph $G=(V,E)$ for which $|V|$ is prime is a multiplicative prime
in any of the three rings $(\G,\oplus,\square),(\G,\oplus,\otimes), (\G,\oplus,\osquare)$
as well as dual rings $(\G,+,\diamond),(\G,\oplus,*),(\G,+,\cdot)$. 
\end{lemma}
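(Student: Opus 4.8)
The plan is to exploit the one structural feature shared by all six products: in every case the vertex set of a product is the Cartesian product $V \times W$ of the two vertex sets, as recorded in the tables above. This immediately suggests introducing the vertex-counting functional $\nu(G) = |V(G)|$ and showing that it behaves like a multiplicative norm taking values in $\NN$.

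First I would verify that $\nu$ is a monoid homomorphism into $(\NN,\times)$ for each of the three products $\square,\otimes,\osquare$. This is essentially immediate from the definitions: since $V(G \square H) = V(G \otimes H) = V(G \osquare H) = V(G) \times V(H)$, we have $\nu(G \cdot H) = \nu(G)\,\nu(H)$ in each case, together with $\nu(1) = \nu(K_1) = 1$. For the three dual products $\diamond,*,\cdot$ I would use that graph complementation preserves vertex sets, together with the duality relations such as $\overline{H \cdot G} = \overline{H} \osquare \overline{G}$ already exploited above, so that $\nu$ remains multiplicative on the dual side as well.

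Next comes the norm argument proper. Suppose $H$ divides $G$ in one of the six rings, so that $G = H \cdot K$ for some graph $K$. Applying $\nu$ gives $|V| = \nu(G) = \nu(H)\,\nu(K)$. Because $|V|$ is an ordinary prime in $\ZZ$, one of the two factors $\nu(H),\nu(K)$ must equal $1$. A graph with a single vertex is necessarily $K_1$, the unit $1$ of the ring; hence either $H = 1$, or else $K = 1$ and therefore $H = G$. Thus the only divisors of $G$ are $1$ and $G$ itself, which is precisely the definition of prime recorded earlier.

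The argument is genuinely uniform across all six rings, so I do not anticipate a serious obstacle; the only points needing care are two bookkeeping checks. The first is confirming vertex-count multiplicativity for the dual products $\diamond$ and $*$, where one must track the complement duality rather than simply read it off a table. The second, and the conceptual heart of the matter, is the observation that the fiber $\nu^{-1}(1)$ consists of exactly the unit $K_1$; this is what guarantees that a factor of norm $1$ is a genuine unit rather than some other object, and it is this fact that converts the primality of the integer $|V|$ into irreducibility of $G$ inside the ring.
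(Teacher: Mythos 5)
Your proposal is correct and follows essentially the same route as the paper: both arguments rest on the multiplicativity of the vertex count under all six products and on the observation that the only graph with a single vertex is the unit $K_1$, so a prime vertex count forces one factor to be $1$. Your write-up merely makes explicit the bookkeeping (the norm homomorphism $\nu$ and the complement duality for the dual products) that the paper leaves implicit.
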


\proof{
For all ring multiplications, the cardinalities of the vertices multiplies. 
If one of the factors has $1$ vertex only, then this factor is $1=K_1$. 
The reason is that there is only one graph for which the vertex 
cardinality is $1$. }

{\bf Remark.} It is the last part of the proof which fails if we look at the volume
in the Zykov addition, for which volume is multiplicative. In that case, there are 
graphs with volume $1$ which are not equal to the unit $1=K_1$.  

\paragraph{}
We have seen that the Zykov sum $G+H$ of two graphs $G,H$ is always connected. 
Also the Zykov product has strong connectivity properties but the 
example $P_n \cdot P_m = P_{mn}$ shows that we don't necessarily have
connectivity in the product. 

\begin{lemma}
If either $G$ or $H$ is connected, then $G \cdot H$ is connected. 
\end{lemma}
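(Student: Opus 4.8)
The plan is to argue directly from the adjacency rule defining the Zykov product rather than through complementation duality, since connectivity is not preserved under the graph complement and so the strong-product side gives no leverage. Recall that in $G \cdot H$ two vertices $(a,b)$ and $(c,d)$ are adjacent precisely when $(a,c) \in E$ or $(b,d) \in F$. The single observation that drives everything is this: if $(a,c)\in E$ is an edge of $G$, then $(a,b)$ is adjacent to $(c,d)$ for \emph{every} choice of $b,d \in W$, because the defining disjunction is already satisfied by its first clause, irrespective of the second coordinates; symmetrically for edges of $H$.

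Assume without loss of generality, using the commutativity of $\cdot$, that $G$ is the connected factor, and suppose first that $G$ has at least one edge. Given two arbitrary product vertices $(a,b)$ and $(c,d)$, I would connect them as follows. Since $G$ is connected there is a path $a=x_0,x_1,\dots,x_m=c$ in $G$. By the observation, the sequence $(x_0,b),(x_1,d),(x_2,d),\dots,(x_m,d)=(c,d)$ is a walk in $G \cdot H$: the first step is valid because $(x_0,x_1)\in E$, and each subsequent step because $(x_i,x_{i+1})\in E$. This handles every pair with $a \neq c$.

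The one case needing a small detour is $a=c$ (so the $G$-path is trivial) with $b \neq d$. Here I would pick any $G$-neighbor $a'$ of $a$, which exists because $G$ is connected and has an edge, hence has no isolated vertex; then $(a,b),(a',d),(a,d)=(c,d)$ is a valid walk, each step justified by the edges $(a,a')\in E$ and $(a',a)\in E$. Thus any two vertices are joined by a walk, and $G \cdot H$ is connected.

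The main obstacle, and the reason the hypothesis should be read with care, is exactly this second case: one cannot move the $H$-coordinate unless $G$ supplies an edge to carry the move. Concretely, when the connected factor is the trivial graph $K_1$ the product degenerates to $G \cdot H = H$ (for instance $K_1 \cdot P_m = P_m$ is disconnected), so the statement really requires the connected factor to be nontrivial, equivalently connected with at least one edge. I would therefore organize the proof around the existence of an edge in the connected factor, disposing of the $K_1$ case separately, after which the argument above applies uniformly.
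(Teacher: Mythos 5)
Your proof is correct and rests on the same core mechanism as the paper's own proof: transport a path $a=x_0,x_1,\dots,x_m=c$ of the connected factor $G$ into the product, using the fact that an edge $(x_i,x_{i+1})\in E(G)$ makes $(x_i,u)$ adjacent to $(x_{i+1},v)$ for \emph{every} choice of second coordinates $u,v$. However, your version is strictly more careful on two points where the paper's argument is deficient, and this is worth recording. First, the paper's walk $(a_0,b_0),\dots,(a_n,b_n)$ says nothing when $a=c$ and $b\neq d$: the $G$-path is then trivial and no step is available to move the $H$-coordinate. Your detour $(a,b)\to(a',d)\to(a,d)$ through a $G$-neighbor $a'$ of $a$ closes exactly this gap. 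Second, you correctly observe that the lemma as stated is false: $K_1$ is connected, yet $K_1\cdot P_m \cong P_m$ is disconnected for $m\geq 2$, so the hypothesis must require the connected factor to have at least one edge (equivalently, at least two vertices). These two defects are really one and the same: when $G=K_1$, every pair of distinct product vertices falls into the $a=c$ case that the paper's proof omits, and that is precisely where the counterexample lives. So your proposal is not merely a rederivation; it repairs both the proof and, implicitly, the statement.
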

\begin{proof} 
Given two points $(a,b),(c,d)$ and assume $G$ is connected. We
can connect $a_0=a,...,a_n=c$ with a path in $G$. Now, for 
any choice $b_1,...,b_n$ in $H$, we have a connection
connect $(a_0,b_0) \dots (a_n,b_n))$.
\end{proof}

\paragraph{}
As we will see below that there is a unique prime factorization for multiplication $\cdot$
as long as the graph $\overline{G}$ is connected. Here is a lemma which tells that for complete graphs, 
we have a unique prime factorization. This is not so clear as we had to extend the arithmetic. 
We have learned for example for Gaussian primes that primes like $p=5$ $Z$ do no
more stay primes in number field and that we can have non-unique prime factorization 
like $2 * 3 = (1+i \sqrt{5})(1-i \sqrt{5})$. 

\begin{lemma}
The graph $K_n$ uniquely factors into $K_{p_1} \cdot \cdots \cdot K_{p_m}$
where $n=p_1 \cdot p_m$ is the prime factorization of $n$. 
\end{lemma}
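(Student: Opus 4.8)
The plan is to reduce the statement to the fundamental theorem of arithmetic in $\mathbb{Z}$ by showing that complete graphs can only factor into complete graphs. Two ingredients are available. First, for the Zykov product the vertex set of $G \cdot H$ is $V(G) \times V(H)$, so $|V(G \cdot H)| = |V(G)|\,|V(H)|$. Second, by the Proposition on the clique number, $c(G) = {\rm dim}(G)+1$ is a ring homomorphism, so $c(G \cdot H) = c(G)\,c(H)$ and $c(K_n)=n$. Existence of the claimed factorization is then immediate: writing $n = p_1 \cdots p_m$ and using $K_a \cdot K_b = K_{ab}$ repeatedly gives $K_{p_1} \cdot \cdots \cdot K_{p_m} = K_n$, and each $K_{p_i}$ is a multiplicative prime because $|V(K_{p_i})| = p_i$ is a rational prime (by the earlier Lemma on graphs with prime vertex number).

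First I would establish the key structural fact: \emph{every nontrivial factor of $K_n$ is itself complete.} Suppose $K_n = G \cdot H$. For any graph the clique number is at most the number of vertices, $c(G) \le |V(G)|$, with equality precisely when $G$ is complete, since a clique is a subset of the vertices and can comprise all of them exactly when every pair is adjacent. Applying both homomorphisms to $K_n$ gives
\begin{equation*}
c(G)\,c(H) = c(K_n) = n = |V(K_n)| = |V(G)|\,|V(H)| .
\end{equation*}
Combined with $c(G) \le |V(G)|$ and $c(H) \le |V(H)|$, the equality of the two products forces $c(G) = |V(G)|$ and $c(H) = |V(H)|$, whence $G = K_{|V(G)|}$ and $H = K_{|V(H)|}$. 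This is the step I expect to carry the real content; everything else is bookkeeping.

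With that in hand, uniqueness follows directly. Any factorization of $K_n$ into multiplicative primes consists, by the previous paragraph applied to each nontrivial split, entirely of complete graphs $K_{a_1}, \dots, K_{a_k}$ with $a_1 \cdots a_k = n$. A complete graph $K_a$ is a multiplicative prime if and only if $a$ is a rational prime: if $a = bc$ with $b,c > 1$ then $K_a = K_b \cdot K_c$ is a nontrivial factorization, while if $a$ is prime then $|V(K_a)| = a$ is prime and the cited Lemma applies. Hence each $a_i$ is prime and $a_1 \cdots a_k = n$ is a prime factorization of the integer $n$. By the fundamental theorem of arithmetic this multiset of primes is uniquely determined, so the multiset $\{K_{a_1}, \dots, K_{a_k}\}$ equals $\{K_{p_1}, \dots, K_{p_m}\}$, which is the asserted uniqueness.
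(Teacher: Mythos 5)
Your overall plan --- show that every factor of $K_n$ is complete, then reduce uniqueness to the fundamental theorem of arithmetic --- is sound in outline, and your second paragraph (each complete factor $K_a$ is prime iff $a$ is prime, then invoke unique factorization in $\mathbb{Z}$) is fine and matches the spirit of the paper's argument. The genuine gap is in the step you yourself flag as carrying ``the real content.'' You derive the structural fact from the multiplicativity $c(G \cdot H) = c(G)\,c(H)$ of the clique number, citing the paper's Proposition. But the inequality your argument actually needs, $c(G \cdot H) \le c(G)\,c(H)$, is false in general, and that Proposition is false as stated: take $G = H = C_5$. The set $\{(i, 2i \bmod 5) \; : \; i \in \mathbb{Z}_5\}$ is a clique of size $5$ in $C_5 \cdot C_5$ (for any two of its elements, either the first coordinates or the second coordinates are adjacent in $C_5$), while $c(C_5)^2 = 4$. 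Via the complement identity $\overline{G \cdot H} = \overline{G} \osquare \overline{H}$ and the self-complementarity of $C_5$, this is exactly the Shannon-capacity phenomenon: the independence number of the strong product $C_5 \osquare C_5$ is $5$, not $4$. The only unconditionally true direction is $c(G \cdot H) \ge c(G)\,c(H)$ (a product of cliques is a clique); but that direction, together with $c \le |V|$, only gives the upper bound $c(G)c(H) \le n$ twice over, and forces neither $c(G) = |V(G)|$ nor $c(H) = |V(H)|$. So your key equation $c(G)c(H) = n$ is unjustified, and no contradiction with a hypothetical non-complete factor is obtained this way.

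The repair is cheap and is precisely the paper's ``direct'' proof: argue on edges rather than clique numbers. If $K_n = U \cdot V$ and $U$ is not complete, choose non-adjacent vertices $v \neq w$ of $U$ and any vertex $a$ of $V$; then $(v,a)$ and $(w,a)$ are distinct vertices of $U \cdot V$ which are non-adjacent, because $(v,w) \notin E(U)$ and $(a,a)$ is not an edge (graphs have no loops). Hence $U \cdot V \neq K_n$, so every factor of $K_n$ is complete, and your bookkeeping paragraph then finishes the proof exactly as written. (The paper also gives a one-line alternative via duality: under complementation the claim becomes unique factorization of the edgeless graphs $P_n$ in the strong ring, where $P_a \osquare P_b = P_{ab}$ and vertex counts multiply, so it is literally the factorization of the integer $n$.) The lesson is that the clique-number homomorphism is not a safe foundation here, even though the paper asserts it; your argument should route around it.
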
    
\begin{proof}
{\bf Using duality.} The graphs $P_n$ in the strong ring $(\G,\oplus,\osquare)$ 
are multiplicative primes if and only if $n$ is a prime. By duality, this is inherited
by the ring $(\G,+ \cdot)$. \\
{\bf Direct.} If not, then $U \times V = x=K_n$ is a factorization of $K_n$, where
one of the $U$ is not a complete graph. We can assume without loss
of generality that it is $U$. There are then two vertices $v,w$ for which 
the edge $(v,w)$ is missing in $U$. But now also the vertex $((v,a),(v,a))$ 
is missing in the product. 
\end{proof}

\paragraph{}
{\bf Remark.} We could 
prove like this uniqueness for graphs of the form $K_p \cdot H$, where $H$ is prime: the reason
is that $K_p \cdot H = H+H+ \cdot +H$. Assume we can write it as $A B$, where $A$ has $p$ vertices, then
$A$ is a subgraph of $K_p$ and so also of the form $K_d$. 

\paragraph{}
Can we give a formula for the $f$-vector of the product and so get criteria for primality.
We have $v1 v2 = v$ and $e = e1 v2^2 + e2 v1^2 - 2 e1 e2$. Now this is a Diophantine problem for 
the unknowns $e1,e2$.  

\paragraph{}  
The following example is in \cite{ImrichKlavzar,HammackImrichKlavzar} for the 
strong ring. By duality we get:

\begin{propo}
The Zykov multiplication $\cdot$ does not feature unique prime factorization.
Complements of connected graphs have a unique prime factorization for the Zykov 
product $\cdot$. 
\end{propo}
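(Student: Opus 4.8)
The plan is to push the entire proposition through the complement duality, which has already been identified as a ring isomorphism $\Phi\colon(\G,+,\cdot)\to(\G,\oplus,\osquare)$, $\Phi(G)=\overline{G}$, sending $0$ to $0$, $1$ to $1$, and satisfying $\overline{H\cdot G}=\overline{H}\osquare\overline{G}$ and $\overline{H+G}=\overline{H}\oplus\overline{G}$. Since $\Phi$ is a bijection of $\G$ onto itself carrying the multiplication $\cdot$ to $\osquare$ and the unit to the unit, it carries multiplicative primes to multiplicative primes and sets up a bijection between factorizations of $G$ in the Zykov ring and factorizations of $\overline{G}$ in the strong ring. Hence unique prime factorization holds for a graph $G$ under $\cdot$ if and only if it holds for $\overline{G}$ under $\osquare$, and the whole statement reduces to the corresponding facts for the strong (Sabidussi) product.

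First I would dispose of the negative assertion. The strong product is known to fail unique prime factorization: \cite{ImrichKlavzar,HammackImrichKlavzar} exhibit a graph $S$ admitting two genuinely distinct $\osquare$-factorizations into primes (the standard Imrich--Klavzar examples in the disconnected regime). Applying $\Phi^{-1}$, the complement $\overline{S}$ then inherits two distinct factorizations $\overline{S}=\overline{A_1}\cdot\cdots\cdot\overline{A_r}=\overline{B_1}\cdot\cdots\cdot\overline{B_s}$ into Zykov primes, so unique factorization fails for $\cdot$.

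For the positive assertion I would invoke the Sabidussi unique prime factorization theorem \cite{Sabidussi,ImrichKlavzar,HammackImrichKlavzar}: every \emph{connected} graph factors uniquely, up to order and isomorphism, into $\osquare$-primes. Given a graph $G$ that is the complement of a connected graph, write $G=\overline{H}$ with $H=\overline{G}$ connected. Then $\Phi(G)=\overline{G}=H$ is connected, so $H$ enjoys unique $\osquare$-factorization by Sabidussi; transporting it back through $\Phi^{-1}$ and using the prime correspondence yields a unique $\cdot$-factorization of $G$. Equivalently, the class of graphs whose complement is connected is exactly $\Phi^{-1}$ of the class of connected graphs, on which the Sabidussi theorem applies verbatim.

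The substantive mathematics is entirely imported from the cited strong-product results, so the points requiring care are the bookkeeping ones. The main thing to verify is that $\Phi$ is an honest isomorphism of multiplicative \emph{monoids} of graphs, not merely of the Grothendieck-completed rings, so that factorizations of genuine graphs correspond to factorizations of genuine graphs and no spurious formal differences $U-V$ intrude; this is immediate from $\overline{H\cdot G}=\overline{H}\osquare\overline{G}$ together with the fact that complementation is an involution preserving the vertex set. The second point is to confirm that connectivity is exactly the invariant Sabidussi's theorem requires and that ``$\overline{G}$ connected'' is the correct translation of its hypothesis, which is precisely the characterization already used for additive Zykov primes.
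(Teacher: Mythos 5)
Your proposal is correct and follows essentially the same route as the paper: the complement duality $G\mapsto\overline{G}$ transports the Imrich--Klavzar non-uniqueness example and the Sabidussi-type unique factorization theorem for connected graphs from the strong ring $(\G,\oplus,\osquare)$ to the Zykov ring $(\G,+,\cdot)$. The paper's own proof is just a terse version of this ("follows from Sabidussi's factorization result and by duality; the example of Imrich--Klavzar works"), and your extra care about the duality being a monoid isomorphism on honest graphs, rather than only on the Grothendieck completion, is a worthwhile clarification rather than a deviation.
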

\begin{proof}
This follows from Sabidussi's factorization result and 
by duality. The example of \cite{ImrichKlavzar} seen below works. 
\end{proof}

We had not been able to get counter example by random search. The smallest
example might be small. But an example given in \cite{ImrichKlavzar} 
for the Cartesian graph product works also for the Zykov product. 

\begin{figure}[!htpb]
\scalebox{0.3}{\includegraphics{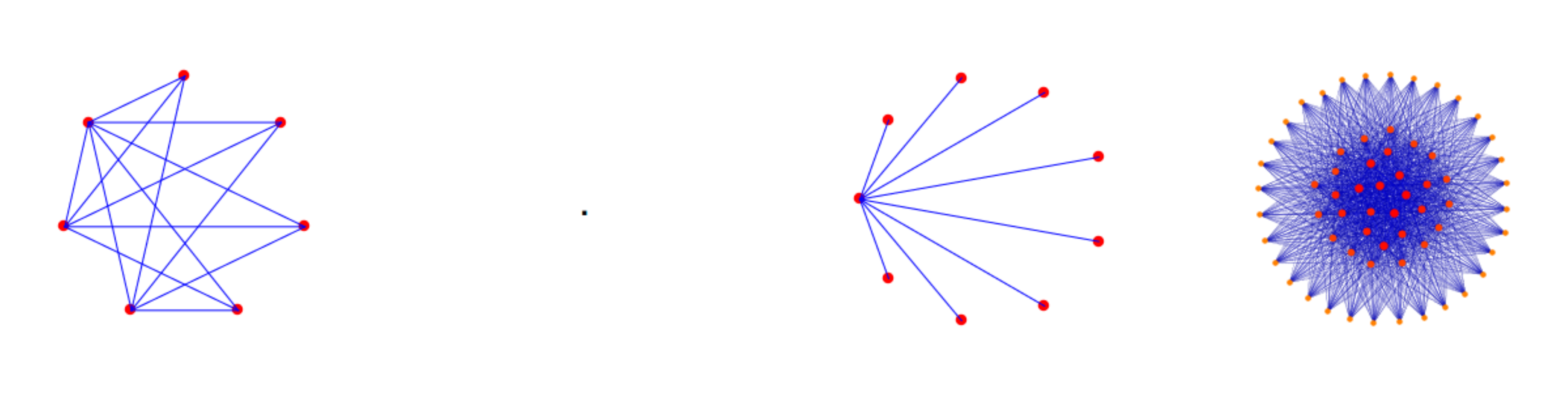}}
\scalebox{0.3}{\includegraphics{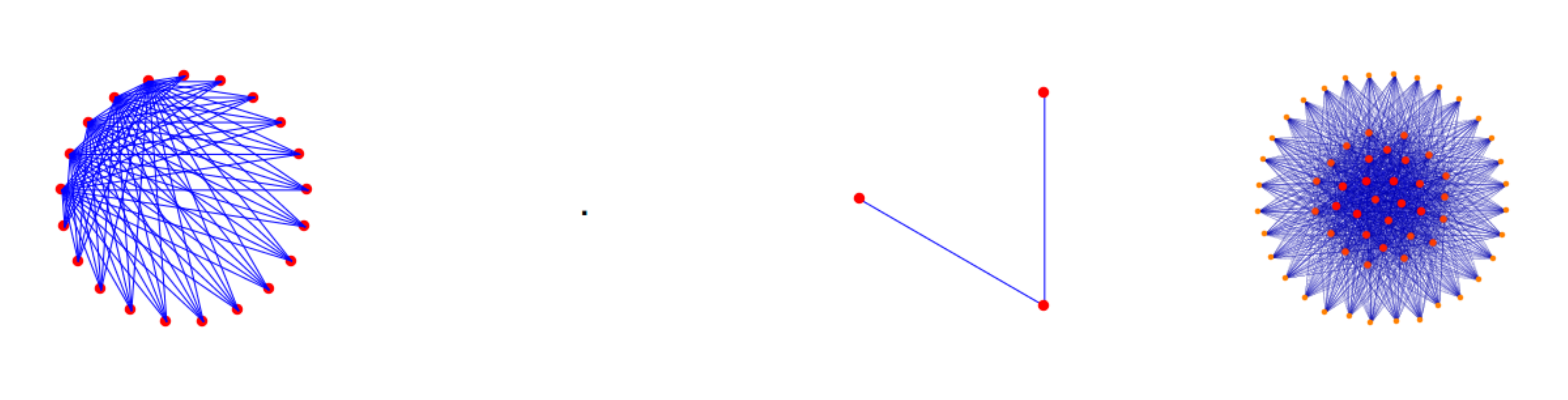}}
\caption{
Non-uniquness of factorization in the Zykov ring.
The graph which gives no uniqueness is 5 dimensional and
has $f$-vector $(63, 1302, 11160, 41664, 64512, 32768)$.
}
\end{figure}

{\bf Remarks.} \\
{\bf 1)} As pointed out in \cite{ImrichKlavzar}, the example is based on the fact that $\mathbb{N}[x]$ has no unique 
prime factorization: $(1+x+x^2)(1+x^3) =(1+x^2+x^4)(1+x)$. \\
{\bf 2)} Geometric realizations of the corresponding Whitney complexes produce non-unique prime factorizations 
of manifolds in the Grothendieck ring of manifolds, where the disjoint union is the addition and the Cartesian
product is the multiplication. 

\paragraph{}
By looking at geometric realizations, this
immediately shows that the Grothendieck ring of simplicial complexes 
with disjoint union and Cartesian (topological product) has no unique prime factorization. 
And since the examples produce manifolds with boundary: 

\begin{coro}
The Grothendieck ring of manifolds with boundary with disjoint union as addition and Cartesian product as multiplication
has no unique prime factorization.
\end{coro}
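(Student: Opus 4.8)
The plan is to realize the non-unique factorization of the preceding proposition geometrically, choosing representatives whose realizations are balls, and to control primality and distinctness by a single ring homomorphism into $\mathbb{N}[x]$. The starting data is the identity $(1+x+x^2)(1+x^3)=(1+x^2+x^4)(1+x)$ in $\mathbb{N}[x]$ from Remark 1. I read $x^k$ as a $k$-ball $D^k$, realized as the solid simplex $|K_{k+1}|$, and form the four manifolds with boundary $M_1=D^0\sqcup D^1\sqcup D^2$, $M_2=D^0\sqcup D^3$, $M_3=D^0\sqcup D^2\sqcup D^4$ and $M_4=D^0\sqcup D^1$; each is a geometric realization of a Whitney complex, namely a disjoint union of solid simplices, so each is a compact manifold with boundary. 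These are exactly the geometric realizations alluded to in Remark 2, with the topological Cartesian product corresponding to the Cartesian simplex product $\times$ (whose realization is the topological product), not to the strong product $\osquare$, whose realization only agrees up to homotopy.

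Second, I would identify the two product manifolds. Using the corner-smoothing homeomorphism $D^a\times D^b\cong D^{a+b}$ componentwise, both $M_1\times M_2$ and $M_3\times M_4$ equal $\bigsqcup_{k=0}^{5}D^{k}$, one ball of each dimension $0,\dots,5$, since the exponent multisets $\{0,1,2\}+\{0,3\}$ and $\{0,2,4\}+\{0,1\}$ both give $\{0,1,2,3,4,5\}$. Thus a single manifold $M=\bigsqcup_{k=0}^{5}D^{k}$ admits the two factorizations $M=M_1\times M_2=M_3\times M_4$.

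Third, I would certify that these are genuinely distinct factorizations into primes. Define the map $\phi$ from the Grothendieck ring of compact manifolds with boundary to $\mathbb{Z}[x]$ by $\phi(M)=\sum_{c}x^{\dim c}$, the sum over connected components $c$ of $M$. It is additive under $\sqcup$ and, because components of a product are products of components and dimension is additive under products, multiplicative under $\times$; it sends genuine manifolds into $\mathbb{N}[x]$ and the point $D^0$ (the unit) to $1$. The factors map to $1+x+x^2$, $1+x^3$, $1+x^2+x^4$, $1+x$, each irreducible in $\mathbb{N}[x]$ (none can be written as a product of two non-constant factors with nonnegative integer coefficients). Hence if some $M_i=A\times B$ then $\phi(A)\phi(B)$ is $\mathbb{N}[x]$-irreducible, forcing $\phi(A)=1$ or $\phi(B)=1$ and so one factor is the point; thus every $M_i$ is prime. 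The only unit is the point (again by $\phi$), and the four factors are pairwise non-homeomorphic since their $\phi$-images differ, so $\{M_1,M_2\}$ and $\{M_3,M_4\}$ are distinct even up to units and reordering. This yields non-unique prime factorization.

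The hard part is the topology of the first two steps, not the algebra of the third. I would need to confirm that the chosen realizations are bona fide manifolds with boundary, that products of balls smooth to balls so that the two factorizations really produce the same manifold, and---if one works through the graph picture---that the relevant product is the Cartesian simplex product $\times$ rather than the strong product $\osquare$, the two agreeing only up to the homotopy of the homotopy lemma. One must also adopt the (standard for Grothendieck rings) convention that a manifold with boundary may have components of different dimension, so that the disjoint unions above are admissible objects; under this convention the argument closes and simultaneously establishes the preceding statement for the Grothendieck ring of all finite simplicial complexes.
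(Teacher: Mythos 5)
Your proposal is correct and takes essentially the same approach as the paper: the paper's own argument (given only as a remark plus one sentence) is precisely to realize the $\mathbb{N}[x]$ identity $(1+x+x^2)(1+x^3)=(1+x^2+x^4)(1+x)$ geometrically, with disjoint unions of balls as the factors and the topological Cartesian product as multiplication, noting that products of balls are balls. Your extra steps---the component-dimension homomorphism $\phi$ into $\mathbb{Z}[x]$ certifying that the four factors are prime, that the point is the only unit, and that the two factorizations are genuinely distinct---are exactly the details the paper leaves implicit, so this is a fleshed-out version of the paper's proof rather than a different route.
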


For related but much more subtle examples of the Grothendieck ring of varieties see \cite{Poonen2002}.

\paragraph{}
A Theorem of Sabidussi tells that
connected graphs have a unique prime factorization with respect to the Cartesian product
\cite{Sabidussi}. An other proof of this Sabidussi's theorem is given in \cite{Vizing}.
The tensor product has a non-unique prime factorization even in the 
connected case as shown in \cite{HammackImrichKlavzar}. 

\begin{figure}[!htpb]
\scalebox{0.3}{\includegraphics{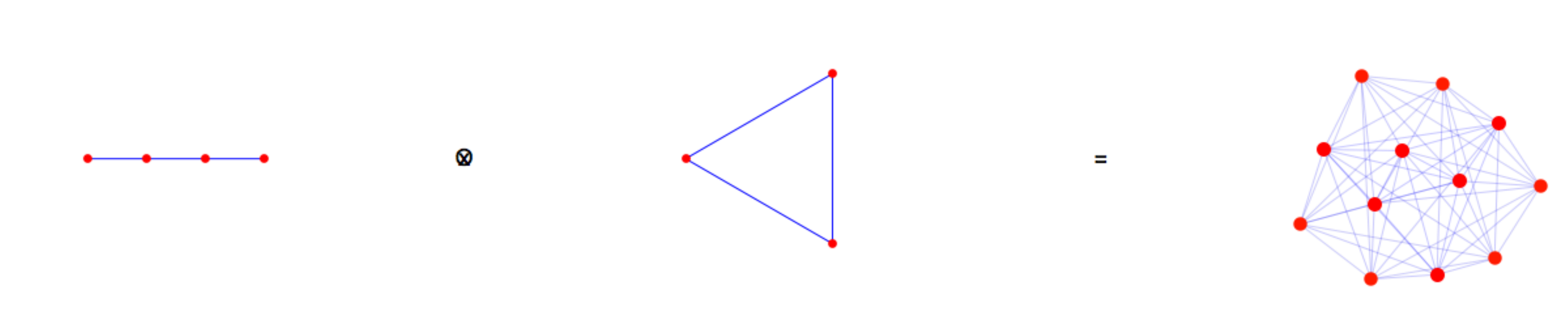}}
\scalebox{0.3}{\includegraphics{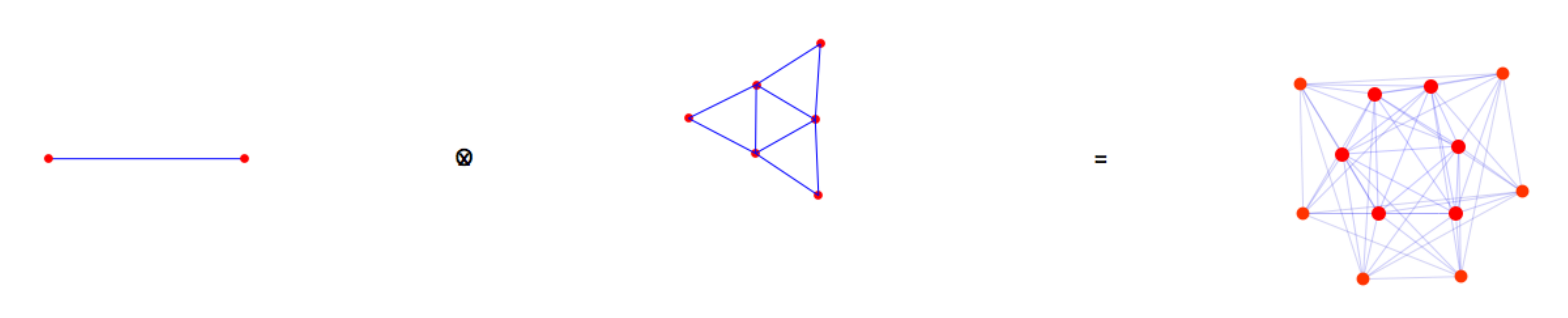}}
\caption{
Non-uniqueness of factorization in the tensor ring. 
The example is given in \cite{HammackImrichKlavzar}. 
}
\end{figure}

\section{Energy}

\paragraph{}
A finite abstract simplicial complex $G$ defines a {\bf connection matrix} $L$.
If $G$ contains $n$ sets, then $L$ is a $n \times n$ matrix with $L_{xy} = L(x,y)=1$ if the 
faces $x,y$ intersect $x \cap y \neq \emptyset$ and $L(x,y)=L_{xy}=0$ else. 
The connection matrix $L$ is always unimodular \cite{Unimodularity} so that its inverse $g$
integer valued. We also know the energy theorem $\sum_{x,y} g(x,y) = \chi(G)$ \cite{Helmholtz}. 
In the context of arithmetic, one can look at the product $G \times H$ of simplicial complexes. 
It is the set of all pairs $(A,B)$ with $A \in G, B \in H$. The product $G \times H$ is not a simplicial complex
as it is not closed under the operation of taking finite subsets. Still, one can look at the
{\bf connection matrix} of $G \times H$. Define $L(X,Y)=1$ if the two sets $X=(x,y)$ and $Y=(a,b)$ 
intersect and $L(X,Y)=0$ else. Let here $\G$ denote the ring of all simplicial complexes generated
by simplicial complexes, disjoint union as addition and with the just defined Cartesian product
as multiplication. The Euler characteristic of an element in this ring is still defined as
$\chi(G)=\sum_x (-1)^{{\rm dim}(x)}$. It follows almost by definition that 
$\chi(G \times H) = (\sum_{x \in G} (-1)^{{\rm dim}(x)}) (\sum_{y \in H} (-1)^{{\rm dim}(y)}
=\chi(G) \chi(H)$. The adjacency matrices tensor if the weak product of graphs
is taken. There is an analogue for connection Laplacians:

\begin{lemma}[Tensor connection lemma]
$L_{G \times H} = L_G \otimes L_H$. 
\end{lemma}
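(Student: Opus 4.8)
The plan is to verify the claimed matrix identity entry by entry, after matching up the index sets of the two matrices. The rows and columns of $L_{G \times H}$ are indexed by the elements of the product $G \times H$, that is, by pairs $(x,y)$ with $x \in G$ and $y \in H$; likewise the rows and columns of the Kronecker product $L_G \otimes L_H$ are indexed by pairs $(x,y)$ where $x$ ranges over the faces of $G$ and $y$ over the faces of $H$. These two index sets are literally the same set $G \times H$, so it suffices to compare the two matrices on an arbitrary common pair of indices $(x,y),(a,b)$.

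First I would recall the defining formula for the Kronecker product: by definition $(L_G \otimes L_H)_{(x,y),(a,b)} = (L_G)_{x,a}\,(L_H)_{y,b}$. Since each connection matrix has $0/1$ entries, this product equals $1$ exactly when both $(L_G)_{x,a}=1$ and $(L_H)_{y,b}=1$, i.e.\ precisely when $x \cap a \neq \emptyset$ and $y \cap b \neq \emptyset$, and equals $0$ otherwise.

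Next I would unwind the left-hand side. An element $(x,y)$ of $G \times H$ is identified with the product set $x \times y \subseteq X \times Y$, and by definition $L_{G \times H}$ records whether two such product sets intersect. The one computation that drives the entire lemma is the factorization of intersections of product sets, $(x \times y) \cap (a \times b) = (x \cap a) \times (y \cap b)$, which is nonempty exactly when both factors are nonempty. Hence $L_{G \times H}$ has entry $1$ at $((x,y),(a,b))$ precisely when $x \cap a \neq \emptyset$ and $y \cap b \neq \emptyset$, matching the formula for the Kronecker product derived above. As the two matrices agree in every entry, they are equal.

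The only real subtlety here is bookkeeping rather than mathematics. One must pin down the convention that a pair $(x,y) \in G \times H$ is identified with the product set $x \times y$, so that the ``intersection'' used in the definition of $L_{G \times H}$ is intersection of these product sets, and one must order the index set $G \times H$ consistently so that the block structure of $L_G \otimes L_H$ lines up with the rows and columns of $L_{G \times H}$. Once these conventions are fixed, the proof reduces to the single set identity above; no homological or spectral input is needed at this stage, even though it is exactly this factorization that later yields $\sigma(L(G \times H)) = \sigma(L(G))\,\sigma(L(H))$.
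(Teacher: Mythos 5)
Your proof is correct and takes essentially the same approach as the paper: the paper describes $L_{G\times H}$ in the natural basis as having a copy of $L_H$ in each block where $L_G$ has entry $1$ and zero blocks elsewhere, which is just the block-structure formulation of your entry-wise verification, both resting on the fact that $(x\times y)\cap(a\times b)\neq\emptyset$ iff $x\cap a\neq\emptyset$ and $y\cap b\neq\emptyset$. Your write-up makes this key set-theoretic factorization explicit, which the paper leaves implicit.
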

\begin{proof}
In the natural basis $e_i \times e_j$, the 
connection Laplacian of the product is a matrix
containing the matrix $L_H$ at the places where $L_G$ 
has entries $1$ and $0$ matrices else. 
\end{proof} 

Linear algebra gives

\begin{coro}
$\sigma(L(G \times H)) = \sigma(L(G)) \sigma(L(H))$.
\end{coro}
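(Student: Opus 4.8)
The plan is to reduce the statement to a standard fact about Kronecker products. By the Tensor connection lemma we already have $L(G \times H) = L(G) \otimes L(H)$, so it suffices to show that the spectrum of a Kronecker product $A \otimes B$ equals the product set $\sigma(A)\sigma(B) = \{ \lambda \mu \; | \; \lambda \in \sigma(A), \mu \in \sigma(B) \}$. The first observation I would make is that the connection matrix $L$ of any finite abstract simplicial complex is real symmetric, since the relation $x \cap y \neq \emptyset$ is symmetric; hence $L(G)$ and $L(H)$ are each diagonalizable with a full orthonormal eigenbasis.

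Next I would fix an orthonormal eigenbasis $\{u_i\}$ of $L(G)$ with $L(G) u_i = \lambda_i u_i$ and an orthonormal eigenbasis $\{v_j\}$ of $L(H)$ with $L(H) v_j = \mu_j v_j$, and then compute, using the mixed-product identity for the Kronecker product,
$$
(L(G) \otimes L(H))(u_i \otimes v_j) = (L(G) u_i) \otimes (L(H) v_j) = \lambda_i \mu_j \, (u_i \otimes v_j) .
$$
The vectors $\{u_i \otimes v_j\}$ are orthonormal, because $\langle u_i \otimes v_j, u_k \otimes v_\ell \rangle = \langle u_i, u_k \rangle \langle v_j, v_\ell \rangle$, and there are exactly $\dim L(G) \cdot \dim L(H)$ of them, which is the dimension of the tensor product space. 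Thus they form a complete eigenbasis of $L(G) \otimes L(H)$, and its eigenvalues, counted with multiplicity, are precisely the products $\lambda_i \mu_j$. Combined with the Tensor connection lemma this yields $\sigma(L(G \times H)) = \sigma(L(G))\sigma(L(H))$.

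The only point that requires care is ensuring we recover the \emph{entire} spectrum rather than a mere subset of products: this is exactly where the symmetry, and hence the diagonalizability, of the connection matrices is used, since it guarantees that the $u_i \otimes v_j$ span the whole space and so account for every eigenvalue with correct multiplicity. Apart from this, no step presents a genuine obstacle; the entire content is the mixed-product identity for Kronecker products together with the spectral theorem for real symmetric matrices.
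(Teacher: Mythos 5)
Your proposal is correct and follows exactly the paper's route: the paper derives this corollary from the Tensor connection lemma with the remark ``Linear algebra gives,'' and your argument simply makes that linear algebra explicit (the connection matrices are real symmetric, so the Kronecker product has the orthonormal eigenbasis $u_i \otimes v_j$ with eigenvalues $\lambda_i \mu_j$). Nothing differs in substance; you have merely supplied the standard spectral-theorem details the paper leaves implicit.
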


The energy theorem follows

\begin{coro}
$\sum_{x,y} g_{G \times H}(x,y) = \chi(G \times H)$. 
\end{coro}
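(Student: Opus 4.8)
The plan is to route everything through the Tensor connection lemma and to invoke the energy theorem \cite{Helmholtz} only on the genuine simplicial complexes $G$ and $H$, never on the product $G \times H$ directly. This is the conceptual heart of the matter: $G \times H$ is not a simplicial complex (it is not closed under taking subsets), so the energy theorem is not available for it a priori, and the corollary is precisely the statement that the energy identity nevertheless survives the product. The factorization through the Kronecker structure is what lets us sidestep this.

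First I would record that both $L_G$ and $L_H$ are unimodular \cite{Unimodularity}, hence invertible, so the integer matrices $g_G = L_G^{-1}$ and $g_H = L_H^{-1}$ exist. By the Tensor connection lemma $L_{G \times H} = L_G \otimes L_H$, and since the Kronecker product of invertible matrices is invertible with $(A \otimes B)^{-1} = A^{-1} \otimes B^{-1}$, I obtain
$$ g_{G \times H} = L_{G \times H}^{-1} = (L_G \otimes L_H)^{-1} = g_G \otimes g_H. $$
Next I would use the elementary fact that the total sum of entries of a Kronecker product factors. Indexing the basis of the product space as $e_X = e_x \otimes e_y$, one has $(g_G \otimes g_H)_{(x,y),(x',y')} = g_G(x,x')\, g_H(y,y')$, whence
$$ \sum_{X,Y} g_{G \times H}(X,Y) = \Bigl( \sum_{x,x'} g_G(x,x') \Bigr) \Bigl( \sum_{y,y'} g_H(y,y') \Bigr). $$
Now the energy theorem applies to each factor separately, giving $\sum_{x,x'} g_G(x,x') = \chi(G)$ and $\sum_{y,y'} g_H(y,y') = \chi(H)$. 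Combining these with the multiplicativity $\chi(G \times H) = \chi(G)\,\chi(H)$ established earlier in this section yields $\sum_{X,Y} g_{G \times H}(X,Y) = \chi(G)\,\chi(H) = \chi(G \times H)$, as claimed.

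I do not expect a genuine obstacle in the calculation: inverting a Kronecker product and summing its entries are routine linear algebra, and all the structural work has already been done by the Tensor connection lemma and by the multiplicativity of $\chi$. The one point demanding care is the conceptual one flagged above, namely that the energy theorem must be applied only to $G$ and $H$ and never to $G \times H$; the entire force of the corollary is that the factored sum reconstitutes $\chi(G \times H)$ despite $G \times H$ itself falling outside the class of simplicial complexes to which \cite{Helmholtz} directly pertains.
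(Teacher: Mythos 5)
Your proof is correct and follows essentially the same route as the paper's: tensor the connection Laplacians via the Tensor connection lemma, observe that inverses (hence Green functions) tensor, factor the entry sum, and conclude using the energy theorem on the factors together with multiplicativity of $\chi$. Your write-up simply makes explicit the steps (unimodularity, the Kronecker inverse formula, and the careful point that the energy theorem is invoked only for $G$ and $H$) that the paper's three-line proof leaves implicit.
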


\begin{proof}
As $L$ tensor also its inverse $g$ tensor. The energy 
as a sum over all matrix entries is therefore multiplicative. 
Also the Euler characteristic is multiplicative.
\end{proof}

{\bf Remarks.}  \\
{\bf 1)} Note that classically, for the {\bf Laplace Beltrami operator} $L$
of a manifold $G$, we have $\sigma(L(G \times H)) = \sigma(L(G)) + \sigma(L(H))$. 
Already in the discrete, the connection Laplacian has special and different features 
than the Hodge Laplacian $H=D^2=(d+d^*)^2$. First of all, it behaves more like the
Dirac operator $D=d+d^*$ in that there is negative spectrum, but it does not have the
symmetry $\sigma(D)=\sigma(-D)$ of the Dirac operator. \\
{\bf 2)} Already the Kirchhoff Laplacian $L_0=A-B = d_0^* d_0$ with adjacency matrix $A$
and diagonal vertex degree matrix $B$, which is the scalar part
$L_0$ of the Hodge Laplacian $H=(d+d^*)^2 = \oplus_{k=0}^d L_k$ has different properties
with respect to the product. The best already known compatibility is for the 
adjacency matrix $A(G)$ which tensors under the multiplication of the weak product 
(graph Cartesian product).  So, also there, the eigenvalues multiply. \\
{\bf 3)} One still has to explore the relevance of the connection Laplacian in a physics context. 
Both the Hodge Laplacian $H=(d+d^*)$ as well as the connection Laplacian $L$ are operators
on the same Hilbert space. The block entry $H_1$ obviously has relations to electromagnetism
as $L A = j$ is equivalent to the Maxwell equation $dA=F, dF=0, d^*F=j$ in a Coulomb gauge. 
It might well be that the connection Laplacian define some internal gravitational energy.
The invertibility of $L$ is too remarkable to not be taken seriously. Furthermore, if we let
the Dirac operator evolve freely in an isospectral way, this nonlinear dynamics also produces
a unitary deformation of the connection Laplacian, of course preserving its spectrum. 
But the energy values, the Green function entries $g(x,y)$ change under the evolution
preserving the total energy, the Euler characteristic. Of course, after the deformation, the
connection Laplacian is no more integer valued so that also its inverse is no more 
integer valued in general. 

\section{A Field of networks} 

\paragraph{}
For any of the rings under consideration we can now also look at the 
smallest field generated by this ring. 
As the cancellation property holds for the addition, we can represent elements 
in that field by $(a-b)/(c-d)$. As we have no unique prime factorization for the
multiplication, we have to keep the equivalence relation. \\

This is not that strange as we know that rational numbers can be equivalent
even so it is not obvious. Especially if we take $pq/(r p)$ for large primes $p,q,u,v$, 
where we can not factor the products easily. 

\paragraph{}
There has been some work on the complexity of factorization in multiplicative network
rings \cite{HammackImrichKlavzar}. For the weak product as well as for the direct 
product, one can get the factors fast. What about for the Zykov product? 
Can we define a graph version for modular arithmetic analogous how $\mathbb{Z}_p$ is obtained 
from $\mathbb{Z}$? Maybe just identify vertices? 
This would be an other way to get a field structure and could have cryptological applications. 

\paragraph{}
Here are some illustrations of how one can represent elements in the 
field generated by the Zykov ring:

\begin{figure}[!htpb]
\scalebox{1.0}{\includegraphics{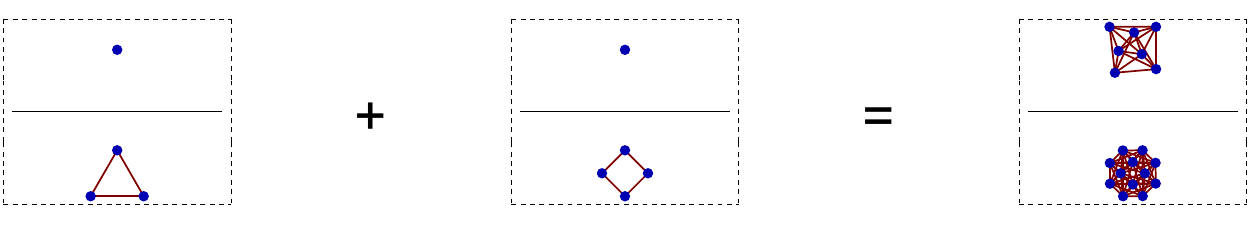}}
\caption{
A computation $1/K_3 + 1/C_4 = (C_4+K_3)/(3 C_4)$ in network arithmetic.
\label{fraction1}
}
\end{figure}

\begin{figure}[!htpb]
\scalebox{1.0}{\includegraphics{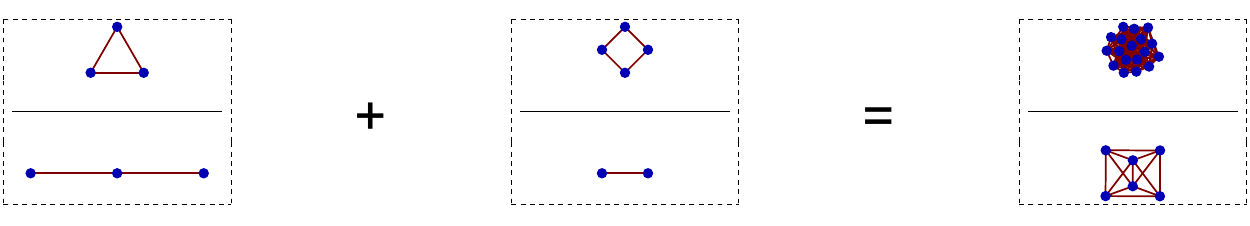}}
\caption{
A computation $K_3/S_3 + C_4/K_2 = (5+ S_3 C_4)/(2(1+P_3))$ in network arithmetic.
As $S_3 C_4 = (1+P_3) 2 P_2 = 2 P_2 + 2 P_6$ this is the $P$-fraction 
$(5+2P_2 +2 P_6)/(2+2 P_3)$. 
\label{fraction1}
}
\end{figure}

\paragraph{}
Lets just prove an Euclid like lemma. Note that this is not entirely trivial
as we have no unique factorization domain: 

\begin{lemma}
The square root of $2$ is irrational in the Zykov field.
\end{lemma}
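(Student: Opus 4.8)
The plan is to imitate Euclid's classical proof of the irrationality of $\sqrt{2}$, but carried out inside the additive Zykov group where the "integers" are the complete graphs $K_n$ with $K_n + K_m = K_{n+m}$ and $K_n \cdot K_m = K_{nm}$. The statement that $\sqrt 2$ is irrational in the Zykov field means that there is no element $G/H$ of the field with $(G/H)^2 = K_2$, equivalently no pair of graphs $G,H$ (with $H \neq 0$) satisfying $G \cdot G = K_2 \cdot (H \cdot H)$ in the Zykov ring. First I would reduce to the case where $G$ and $H$ are actual complete graphs: the clique number $c(G)={\rm dim}(G)+1$ is a ring homomorphism to $\mathbb Z$ by the first Proposition, so applying $c$ to the hypothetical relation $G^2 = K_2 \cdot H^2$ yields $c(G)^2 = 2\, c(H)^2$ in the honest integers. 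Since $\sqrt 2 \notin \mathbb Q$, this already forces $c(G)=c(H)=0$, and I would argue that for the candidate solution the natural normalization is $G=K_a$, $H=K_b$, reducing the whole question to the equation $K_{a^2} = K_2 \cdot K_{b^2} = K_{2b^2}$, i.e. $a^2 = 2b^2$ in $\mathbb Z$.

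Once the problem is pushed down to the subring $\mathbb Z_K$ of complete graphs, the remaining step is exactly the ordinary irrationality of $\sqrt 2$: the map $n \mapsto K_n$ is a ring isomorphism from $\mathbb Z$ onto $\mathbb Z_K$ (this is Proposition stating that the Zykov ring contains $\mathbb Z_K$, together with $K_n+K_m=K_{n+m}$ and $K_n K_m = K_{nm}$), so $K_{a^2}=K_{2b^2}$ holds if and only if $a^2 = 2b^2$ holds in $\mathbb Z$, which has no nonzero solution. Therefore no element of $\mathbb Z_K$ squares to $K_2$, and the clique-number reduction above shows no element of the full Zykov field can either. I would present the argument in two movements: first the homomorphism reduction $c(G)^2 = 2c(H)^2$ killing the general case, then the clean statement that inside $\mathbb Z_K \cong \mathbb Z$ the equation is the familiar Diophantine obstruction.

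The step I expect to be the genuine obstacle is the reduction itself, not the number theory: the clique number $c$ is only a ring homomorphism to $\mathbb Z$, so from $c(G)^2 = 2c(H)^2$ I can conclude $c(G)=c(H)=0$, but that does not by itself force $G$ and $H$ to be the zero graph or to be complete graphs. The delicate point is to rule out a hypothetical square root of $K_2$ built from graphs that are \emph{not} detected by the clique number alone, for instance elements $G-H$ of the Grothendieck group whose clique numbers cancel. To close this gap cleanly I would invoke a second multiplicative invariant — the vertex-cardinality $|V(\cdot)|$, which multiplies under the Zykov product (used in the Lemma that graphs on a prime number of vertices are multiplicative primes) — so that $(G/H)^2 = K_2$ also forces $|V(G)|^2 = 2\,|V(H)|^2$ among ordinary integers, again impossible unless both vanish, i.e. unless $G=H=0$. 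Combining the clique number and the vertex count, both of which are multiplicative and behave like the integers on the relevant subring, pins any would-be solution down to $\mathbb Z_K$ and reduces irrationality in the Zykov field to irrationality in $\mathbb Z$, which is classical.
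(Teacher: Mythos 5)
Your approach is genuinely different from the paper's, but it contains a gap that the tools you chose cannot close. The paper's proof is Euclid's parity argument carried out with graph-theoretic prime factorization: from $p^2 = 2q^2 = q^2+q^2$ it notes that the left-hand side is a Zykov sum, hence connected, invokes the unique prime factorization available in that situation (Sabidussi's theorem, transported to the Zykov product by complement duality with the strong ring), and then counts prime factors modulo $2$, using that $K_2$ is a multiplicative prime. Your proof replaces factorization by $\mathbb{Z}$-valued ring homomorphisms (the clique number $c$ and the vertex count $v$), and that substitution is exactly where it breaks down.

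The gap is the final inference. From $G^2 = K_2 \cdot H^2$ you correctly deduce $c(G)=c(H)=0$ and $v(G)=v(H)=0$, but you then assert that this forces $G=H=0$. That is false in the Grothendieck ring: an element is a formal difference $A-B$ of graphs, and $c(A-B)=v(A-B)=0$ merely says that $A$ and $B$ have the same clique number and the same number of vertices. For instance, taking $A=C_4$ and $B$ the path on four vertices gives a nonzero element annihilated by both invariants. The joint kernel of $(c,v)$ is a nonzero ideal, and any argument using only homomorphisms into $\mathbb{Z}$ is structurally blind to hypothetical solutions $G/H$ whose numerator and denominator lie in that ideal: for such $G,H$ your equations $c(G)^2=2\,c(H)^2$ and $v(G)^2=2\,v(H)^2$ read $0=0$ and exclude nothing, so the contradiction with $H \neq 0$ is never reached. (Your intermediate suggestion to ``normalize'' $G=K_a$, $H=K_b$ is likewise unjustified: nothing forces a square root of $K_2$ to lie in the subring $\ZZ_K$, and it is even inconsistent with your own conclusion $c(G)=0$ unless $a=0$.) This is precisely why the paper reaches for unique prime factorization: some multiplicative rigidity beyond integer-valued invariants is needed, and the parity count of prime factors is what rules out the solutions that $c$ and $v$ cannot see.
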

\begin{proof}
Assume $\sqrt{2}=p/q$, then $2 q^2 = p^2$. This means $q^2 + q^2 = p^2$.
Since $q^2 + q^2$ is a sum, it is connected and features a unique prime
factorization by Sabidussi's theorem. Now the proof is the same as in 
the classical case as $2$ is a multiplicative prime and the number of prime
factors on the left and right are not the same modulo $2$. 
\end{proof}

\paragraph{}
More generally we can prove that for connected primes the square root
is irrational:

\begin{lemma}
For any connected graph $G$ which is a multiplicative prime in the Zykov ring,
the square root of $G$ is irrational.
\end{lemma}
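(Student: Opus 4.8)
The plan is to mirror the structure of the previous lemma (the $\sqrt{2}$ case), generalizing from the specific prime $2 = K_2$ to an arbitrary connected multiplicative prime $G$. First I would assume for contradiction that $\sqrt{G} = p/q$ in the Zykov field, where $p,q$ are graphs (using that the cancellation property lets us represent field elements as such fractions). Clearing denominators gives $G \cdot q^2 = p^2$, where $q^2 = q \cdot q$ and $p^2 = p \cdot p$ are Zykov products. The key observation, exactly as before, is that $G \cdot q^2$ can be rewritten in a form that is \emph{connected}: since $G$ is connected, Lemma on connectivity (``if either $G$ or $H$ is connected, then $G \cdot H$ is connected'') guarantees $G \cdot q^2$ is connected, and likewise one wants $p^2$ to be connected so that Sabidussi's unique prime factorization theorem for connected graphs applies to both sides.

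The central step is then the standard parity-of-multiplicity argument. On the right-hand side, $p^2 = p \cdot p$ contributes every prime factor of $p$ an even number of times. On the left-hand side, $G \cdot q^2$ contributes $G$ (an odd number of times, namely once more than its multiplicity in $q^2$, which is even) together with the even contributions from $q^2$. Because $G$ is itself a multiplicative prime, its total multiplicity in $G \cdot q^2$ is odd, whereas its multiplicity in $p^2$ must be even. Unique prime factorization of connected graphs then forces a contradiction, precisely as in the rational case. I would emphasize that this is where the connectedness hypothesis on $G$ is essential: Sabidussi's theorem only provides unique factorization in the connected case, so we genuinely need $G \cdot q^2$ and $p^2$ to be connected to invoke it.

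The main obstacle I anticipate is the handling of the square $p^2$ and the denominator $q$: we need $p^2$ to be connected in order to apply Sabidussi, but $p$ itself need not be connected a priori. The graceful resolution is that $G \cdot q^2 = p^2$ is connected (being equal to the connected graph $G \cdot q^2$), so $p^2$ is automatically connected even if $p$ is not; one must then argue that the prime factorization of $p^2$ still has all multiplicities even, which follows because the Zykov product multiplies vertex cardinalities and, more to the point, because in the factorization of the connected graph $p^2$ the primes occurring are determined uniquely and $p^2 = p\cdot p$ exhibits one factorization with even multiplicities. I would also note the subtlety that in the Zykov field we lack unique factorization in general, which is exactly why the argument is restricted to connected graphs where Sabidussi's theorem restores it.

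A cleaner framing, which I would adopt to avoid fussing over connectedness of $p$, is to pass to the strong ring via the complement duality: $\overline{G \cdot q^2} = \overline{G} \osquare \overline{q}^{\,\osquare 2}$ and $\overline{p^2} = \overline{p}^{\,\osquare 2}$, and invoke that unique prime factorization holds for the strong product on the relevant class. But the most economical route remains the direct Sabidussi argument: assume $\sqrt{G} = p/q$, deduce $G \cdot q^2 = p^2$, observe both sides are connected, apply Sabidussi's unique factorization, and count the multiplicity of the prime $G$ modulo $2$ to reach the contradiction.
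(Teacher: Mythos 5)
Your proof is correct and follows essentially the same route as the paper's: clear denominators to get $G \cdot q^2 = p^2$, use the connectivity lemma to see that both sides are connected, invoke Sabidussi's unique prime factorization in the connected case, and reach a contradiction by counting the multiplicity of the prime $G$ modulo $2$. Your explicit observation that $p^2$ inherits connectedness from being equal to the connected graph $G \cdot q^2$ (so that $p$ itself need not be connected) spells out a detail the paper leaves implicit, but it is the same argument.
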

\begin{proof}
It is the same classical argument. Writing $G \cdot Q^2 = P^2$ 
and noticing that for connected $G$, the product $G \cdot Q^2$ is 
connected, we have a unique prime factorization on both sides. 
Again the number of prime factors of $G$ modulo $2$ is different on the
left and right hand side. 
\end{proof}

{\rm Ramark.} \\
We currently do not know of any disconnected multiplicative prime $P$ 
for which the square root is rational.
\vfill

\pagebreak

\section{Code}

\paragraph{}
Here is Mathematica code and example computations as shown in
examples A-F in this text. \\

\begin{tiny}
\lstset{language=Mathematica} \lstset{frameround=fttt}
\begin{lstlisting}[frame=single]
NormalizeGraph[s_]:=Module[{r,v=VertexList[s],e=EdgeRules[s]},
 r=Table[v[[k]]->k,{k,Length[v]}];UndirectedGraph[Graph[v /.r,e /.r]]];

ZykovAdd[s1_,s2_]:=Module[{v,w,o,p,f,g,V,e,q,r,L=Length},
 v=VertexList[s1];   o=L[v]; f=EdgeList[s1]; q=L[f];
 w=VertexList[s2]+o; p=L[w]; g=EdgeList[s2]; r=L[g]; V=Union[v,w];
 f=If[L[f]==0,{},Table[f[[k,1]]-> f[[k,2]],{k,q}]];
 g=If[L[g]==0,{},Table[g[[k,1]]+o->g[[k,2]]+o,{k,r}]];
 e=Flatten[Union[{f,g,Flatten[Table[v[[k]]->w[[l]],{k,o},{l,p}]]}]];
 NormalizeGraph[UndirectedGraph[Graph[V,e]]]];        ZA=ZykovAdd;

ZykovProduct[s1_,s2_] :=Module[{v,w,f,g,n,o,p,V,e,q,r,A,L=Length},
 v=VertexList[s1]; n=L[v]; f=Union[EdgeList[s1]]; q=L[f];
 w=VertexList[s2]; o=L[w]; g=Union[EdgeList[s2]]; r=L[g];
 V=Partition[Flatten[Table[{v[[k]],w[[l]]},{k,n},{l,o}]],2];
 f=Table[Sort[{f[[k,1]],f[[k,2]]}],{k,q}];
 g=Table[Sort[{g[[k,1]],g[[k,2]]}],{k,r}]; e={}; A=Append;
 Do[e=A[e,{f[[k,1]],w[[m]]}->{f[[k,2]],w[[l]]}],{k,q},{m,o},{l,o}];
 Do[e=A[e,{v[[m]],g[[k,1]]}->{v[[l]],g[[k,2]]}],{k,r},{m,n},{l,n}];
 NormalizeGraph[UndirectedGraph[Graph[V,e]]]];        ZP=ZykovProduct; 

s1=RandomGraph[{9,20}];s2=RandomGraph[{9,20}];s3=RandomGraph[{9,20}];
A1=ZykovProduct[s1,s2];A2=ZykovProduct[s1,s3];B2=ZykovAdd[s2,s3];
A=ZykovAdd[A1,A2];B=ZykovProduct[s1,B2];Print[IsomorphicGraphQ[A,B]];

KK[n_]:=CompleteGraph[n];KK[n_,m_]:=CompleteGraph[{n,m}];CC=CycleGraph;
SS[n_]:=StarGraph[n+1];WW[n_]:=WheelGraph[n+1]; KI=ZA[KK[2],PP[2]];  
LL[n_]:=LinearGraph[n+1]; PP[n_]:=UndirectedGraph[Graph[Range[n],{}]]; 
OO=ZP[KK[3],PP[2]]; S3=ZP[KK[4],PP[2]]; WM=ZA[PP[3],KK[2]]; 
ZP[s_1,s2_,s3_]:=ZP[ZP[s1,s2],s3]; ZA[s1_,s2_,s3_]:=ZA[s1,ZA[s2,s3]]; 

(*  Example A                                                        *)
Print[IsomorphicGraphQ[ZP[CC[4],CC[4]] , ZA[KK[4,4],KK[4,4]]]];
Print[IsomorphicGraphQ[ZP[CC[4],CC[4]] , ZP[KK[2],KK[4,4]]]];
(*  Example B                                                        *)
Print[IsomorphicGraphQ[ZP[KK[2],CC[4]]  , S3]];
Print[IsomorphicGraphQ[ZP[KI,KI] ,   ZA[PP[4],KK[4],S3]] ];
(*  Example C                                                        *)
Print[IsomorphicGraphQ[ZP[SS[5], SS[5]] , ZA[ SS[25], KK[5,5]] ]];
(*  Example D                                                        *)
Print[IsomorphicGraphQ[ZP[WM, WM ] , ZA[ PP[9] , ZP[KK[4],SS[3]] ]]];
(*  Example E                                                        *)
Print[IsomorphicGraphQ[ZA[PP[5], PP[7]] , KK[5,7] ]];
Print[IsomorphicGraphQ[ZP[KK[2,3],KK[2,3]],ZA[PP[4],PP[9],KK[6,6]]]];
(* Example  F                                                        *)
Print[IsomorphicGraphQ[ZP[S3,S3 ] , ZP[ KK[16], PP[4] ] ]];
Print[IsomorphicGraphQ[ZP[OO,OO ] , ZP[ KK[9],  PP[4] ] ]];
\end{lstlisting}
\end{tiny}

\pagebreak

\paragraph{}
And here is example code illustrating that the Euler characteristic is
multiplicative on the strong ring. The Euler characteristic computation
uses the Poincar\'e-Hopf theorem allowing to reduce it to Euler characteristic
computations of part of unit spheres. 

\begin{tiny}
\lstset{language=Mathematica} \lstset{frameround=fttt}
\begin{lstlisting}[frame=single]
UnitSphere[s_,a_]:=Module[{b=NeighborhoodGraph[s,a]},
  If[Length[VertexList[b]]<2,Graph[{}],VertexDelete[b,a]]];
EulerChi[s_]:=Module[{vl,n,sp,u,g,sm,ff,a,k,el,m,q,A},
  vl=VertexList[s]; n=Length[vl];
  ff=Range[n]; el=EdgeList[s]; m=Length[el];
  g[b_]:=ff[[Position[vl,b][[1,1]]]];
  If[n==0,0,If[n==1 || m==Binomial[n,2],1,If[m==0,n,
  u=Table[ A=g[vl[[a]]]; sp=UnitSphere[s,vl[[a]]];
  q=VertexList[sp]; sm={};
  Do[If[g[q[[k]]]<A,sm=Append[sm,q[[k]]]],{k,Length[q]}];
  If[Length[sm]==0,1,(1-EulerChi[Subgraph[sp,sm]])],{a,n}];
  Sum[u[[k]],{k,n}]]]]];

NormalizeGraph[s_]:=Module[{r,v=VertexList[s],e=EdgeRules[s]},
 r=Table[v[[k]]->k,{k,Length[v]}];UndirectedGraph[Graph[v /.r,e /.r]]];
ZykovProduct[s1_,s2_] :=Module[{v,w,f,g,n,o,p,V,e,q,r,A,L=Length},
 v=VertexList[s1]; n=L[v]; f=Union[EdgeList[s1]]; q=L[f];
 w=VertexList[s2]; o=L[w]; g=Union[EdgeList[s2]]; r=L[g];
 V=Partition[Flatten[Table[{v[[k]],w[[l]]},{k,n},{l,o}]],2];
 f=Table[Sort[{f[[k,1]],f[[k,2]]}],{k,q}];
 g=Table[Sort[{g[[k,1]],g[[k,2]]}],{k,r}]; e={}; A=Append;
 Do[e=A[e,{f[[k,1]],w[[m]]}->{f[[k,2]],w[[l]]}],{k,q},{m,o},{l,o}];
 Do[e=A[e,{v[[m]],g[[k,1]]}->{v[[l]],g[[k,2]]}],{k,r},{m,n},{l,n}];
 NormalizeGraph[UndirectedGraph[Graph[V,e]]]];        ZP=ZykovProduct;
StrongProduct[s1_,s2_]:=Module[{t1,t2,t},
  t1=GraphComplement[s1]; t2=GraphComplement[s2]; t=ZykovProduct[t1,t2];
  NormalizeGraph[GraphComplement[t]]];

Do[
s1=RandomGraph[{10,13}];s2=RandomGraph[{12,9}]; ss=StrongProduct[s1,s2];
{a,b,c}={EulerChi[s1],EulerChi[s2],EulerChi[ss]}; 
Print["a=",a,",b=",b,",c=",c,"   a*b=c is ",a*b==c],{10}]
\end{lstlisting}
\end{tiny}

\pagebreak

\paragraph{}
Finally, here is example code illustrating that the connection Laplacian 
tensors when taking the Cartesian product and that the energy is the 
Euler characteristic. Also this code block is self contained and can be
grabbed by looking at the LaTeX source on the ArXiv.

\begin{tiny}
\lstset{language=Mathematica} \lstset{frameround=fttt}
\begin{lstlisting}[frame=single]
CliqueNumber[s_]:=Length[First[FindClique[s]]];
ListCliques[s_,k_]:=Module[{n,t,m,u,r,V,W,U,l={},L},L=Length;
  VL=VertexList;EL=EdgeList;V=VL[s];W=EL[s]; m=L[W]; n=L[V];
  r=Subsets[V,{k,k}];U=Table[{W[[j,1]],W[[j,2]]},{j,L[W]}];
  If[k==1,l=V,If[k==2,l=U,Do[t=Subgraph[s,r[[j]]];
  If[L[EL[t]]==k(k-1)/2,l=Append[l,VL[t]]],{j,L[r]}]]];l];
W[s_]:=Module[{F,a,u,v,d,V,LC,L=Length},V=VertexList[s];
  d=If[L[V]==0,-1,CliqueNumber[s]];LC=ListCliques;
  If[d>=0,a[x_]:=Table[{x[[k]]},{k,L[x]}];
  F[t_,l_]:=If[l==1,a[LC[t,1]],If[l==0,{},LC[t,l]]];
  u=Delete[Union[Table[F[s,l],{l,0,d}]],1]; v={};
  Do[Do[v=Append[v,u[[m,l]]],{l,L[u[[m]]]}],{m,L[u]}],v={}];v];
Fvector[s_]:=Delete[BinCounts[Length /@ W[s]],1];dim[x_]:=Length[x]-1;
Euler[s_]:=Module[{w=W[s],n},n=Length[w];Sum[(-1)^dim[w[[k]]],{k,n}]];
F[A_,z_]:=A-z IdentityMatrix[Length[A]]; F[A_]:=F[A,-1];

ConnectionLaplacian[s1_,s2_]:=Module[{c1=W[s1],c2=W[s2],n,A,c},
  n = Length[c1]*Length[c2]; c = {}; A = Table[1,{n},{n}];
  Do[c=Append[c,{c1[[k]],c2[[l]]}],{k,Length[c1]},{l,Length[c2]}];
  Do[ If[(DisjointQ[c[[k,1]],c[[l,1]]]||DisjointQ[c[[k,2]],c[[l,2]]]),
    A[[k,l]] = 0],{k,n},{l,n}]; A];
ConnectionGraph[s_] := Module[{c=W[s],n,A},n=Length[c];
  A=Table[1,{n},{n}];Do[If[DisjointQ[c[[k]],c[[l]]]||
  c[[k]]==c[[l]],A[[k,l]]=0],{k,n},{l,n}];AdjacencyGraph[A]];
ConnectionLaplacian[s_]:=F[AdjacencyMatrix[ConnectionGraph[s]]];

MyTensorProduct[A_,B_]:=Module[{n1=Length[B],n2=Length[A],n},
   n = n1*n2; Table[B[[1+Mod[k-1,n1],1+Mod[l-1,n1]]]*
   A[[1+Floor[(k-1)/n1],1+Floor[(l-1)/n1]]],{k,n},{l,n}]];

s=RandomGraph[{7,10}];t=RandomGraph[{7,10}];
A=ConnectionLaplacian[s]; B=ConnectionLaplacian[t];
L=ConnectionLaplacian[s,t];
M=MyTensorProduct[A,B]; Print[L==M];
Energy=Total[Flatten[Inverse[L]]]; Energy==Euler[s]*Euler[t]
a=Eigenvalues[1.0*A]; b=Eigenvalues[1.0*B];
c=Sort[Eigenvalues[1.0*M]];
d=Sort[Flatten[Table[a[[k]]*b[[l]],{k,Length[A]},{l,Length[B]}]]];
Print[Total[Abs[c-d]]] 
\end{lstlisting}
\end{tiny}

A Mathematica demonstration project, featuring the three graph products
$\square,\otimes,\osquare$ can be seen in \cite{Shemmer}.

\bibliographystyle{plain}

\end{document}